  \newcolumntype{C}[1]{>{\centering\arraybackslash$}m{#1}<{$}}
 \newlength{\mycolwd}                                         
\newlength{\mycolwdm}                                         
\newlength{\mycolwda}                                         
\newlength{\mycolwdb}                                         
\def\locg{\Theta}  \def\uev{^{\mathrm{ev}}} 
\def\npt{\n\mathrm{pt}}
\def\bipg{\Lambda}
\def\alp{\alpha}
\def\ep{\epsilon}
\def\bR{\mathscr R} 
\def\ureg{^{\mathrm{reg}}}
\def\ffp{\pt}
\def\ft{{t}}
\def\tt{\tau}
\def\tp{\varphi}
\def\sL{\mathscr L} \def\sN{\mathscr N}
\def\gnd{G_{g,n,\bd}}
\def\gndzo{G_{g,n,d}^{[0,1]}}
\def\lam{{\lambda}}
\numberwithin{equation}{section}
\def\nmsp{\mathrm{NMSP}}
\newcommand{\id}{\mathrm{I}}
\newcommand{\bq}{\mathbf{q}}
\newcommand{\bt}{\mathbf{t}}
\def\PF{\mathcal{PF}}
\def\mS{{\mathscr S}}
\def\mT{\mathcal F^\bullet}
 \def\b{b}
\def\lgnd{_{g,n,\bd}}
\def\diag{{\mathrm {diag}}}
\def\sR{{ {\mathscr R}}}
\def\ff{{p}}
\def\vdim{\mathrm{vir}.\dim}
\def\bw{{\mathbf w}}
\def\virt{^{\vir}}
\newcommand{\tw}{\mathrm{tw}}
\def\lsta{_{\ast}}
\def\sO{\mathscr{O}}
\newcommand{\CC}{\mathbb{C}}
\newcommand{\NN}{\mathbb{N}}
\newcommand{\EE}{\mathbb{E}}
\newcommand{\LL}{L^\circ}
\newcommand{\PP}{\mathbb{P}}
\newcommand{\QQ}{\mathbb{Q}}
\newcommand{\RR}{\mathbb{R}}
\newcommand{\ZZ}{\mathbb{Z}}
\newcommand{\va}{{\vec{a}}}
\newcommand{\vb}{{\vec{b}}}
\newcommand{\vz}{{\vec{z}}}
\newcommand{\valp}{{\vec{\alp}}}
\newcommand{\vbeta}{{\vec{\beta}}}
\def\bS{\mathcal{S}}
\newcommand{\vir}{ {\mathrm{vir}} }
\newcommand{\ev}{ \mathrm{ev} }
\newcommand{\cal}{\mathcal}
\def\cC{{\cal C}}
\def\n{\mathrm{N}} 
\def\$\nmsp${{\n MSP}}
\def\cW{{\cal W}}
 \def\sC{{\mathscr C}}
\def\pr{\mathrm{pr}}
  \def\lalp{_\alpha}
\def\fl{{\mathrm{fl}}}
\def\dual{^{\vee}}
\def\sta{^\ast}
\def\ust{^{\mathrm{st}}}
\def\gnd{G_{g,n,\bd}}
 \newcommand{\Si}{\Sigma}
\newcommand{\Ga}{\Gamma}
\DeclareMathOperator{\End}{End}
\DeclareMathOperator{\Hom}{Hom}
\DeclareMathOperator{\Aut}{Aut}
\def\lggd{_{g,\gamma,\bd}}
\def\lev{{\mathrm{lev}}}
\newtheorem{prop}{Proposition}[section]
\newtheorem{convention}[prop]{Convention}
\newtheorem{remark}[prop]{Remark}
\newtheorem{definition}[prop]{Definition}
\newtheorem{corollary}[prop]{Corollary}
\newtheorem{example}[prop]{Example}
\newtheorem{dummy}{}[section]
\newtheorem{lemma}[prop]{Lemma}
\newtheorem{proposition}[dummy]{Proposition}
\newtheorem{theorem}[dummy]{Theorem}
\newtheorem{thm}{Theorem}
\newtheorem*{principle}{Principle}
\theoremstyle{definition}
\def\bd{\bold d}
\def\lloc{{\mathrm{loc}}}
\def\loc{^{\mathrm{loc}}}
\def\msp{^{M}}
\def\pt{\mathrm{pt}}
\def\hga{\Gamma}
\def\beq{\begin{equation}}
\def\eeq{\end{equation}}
\def\Pf{{\PP^4}}
\def\ti{\tilde}
\def\Lam{{\Lambda}}
\def\barM{{\overline{\mathcal M}}}
\def\sub{\subset}
\def\oT{\tilde T}
\let\ga=\Ga
 \DeclareMathOperator{\Cont}{Cont}
  \DeclareMathOperator{\Contr}{Contri}
\def\sH{{\mathscr H}}
\newcommand{\M}{\overline{\mathcal{M}}}
\renewcommand{\ev}{\mathrm{ev}}
\newcommand{\Res}{\mathrm{Res}}
\def\Pfn{\PP^{4+\n}}
\def\FF{\mathbb F}
\def\aA{\mathbb A}
\def\lra{\longrightarrow}
\def\and{\quad\mathrm{and}\quad}
\def\bl{\bigl(} \def\br{\bigr)}
\title[Polynomial structure  of GW potential via $\nmsp$]{Polynomial structure  of Gromov--Witten \\
 potential  of quintic $3$-folds via NMSP}
\author{Huai-Liang Chang}
\address{Department of Mathematics, Hong Kong University of Science and Technology, Hong Kong} \email{mahlchang@ust.hk}
\thanks{${}^1$Partially supported by Hong Kong grant GRF 16301515 and  GRF 16301717}
\author[Shuai Guo]{Shuai Guo}
\address{School of Mathematical Sciences and Beijing International Center for Mathematical Research, Peking University
}
\email{guoshuai@math.pku.edu.cn}
\thanks{${}^2$Partially supported by NSFC grants 11431001 and 11501013}
\author{Jun Li}
\address{Department of Mathematics, Stanford University,
USA; \hfil\newline 
\indent Shanghai Center for Mathematical Sciences, Fudan University, China} \email{jli@stanford.edu}
\thanks{${}^3$Partially supported by   NSF grant DMS-1564500 and DMS-1601211. }
\begin{document}

\begin{abstract} We use stable graphs to package the $\nmsp$ relations.
 Our tools are the $S$ matrix of the $\sO(5)$-twisted $\PP^{\n+4}$ equivariant GW theory, and the $R$ matrix obtained from the stablization of 
 the theory's localization formula.
\end{abstract}
\setcounter{section}{-1}
\setcounter{tocdepth}{1}
\maketitle
{\small
\tableofcontents
}

\section{Introduction}
 
This is the second of a three-paper series. In this paper, we will prove a structure result relating the 
Gromov-Witten (in short GW) potential function of quintic Calabi-Yau (in short CY) threefolds and the 
NMSP potential functions defined in the first paper. As a consequence, 
we will prove that the genus $g$ GW potential $F_g$ of quintic CY threefolds lies in
the Yamaguchi-Yau ring generated by five specified generators.

\medskip

In the first of this three-paper series, for a fixed integer $\n>1$, we construct the moduli space $\cW_{g,\gamma,\bd}$ 
of $\nmsp$ fields of numerical data $(g,\gamma,\bd)$; construct its 
$G=(\CC\sta)^\n$-equivariant virtual cycle $[\cW_{g,\gamma,\bd}]\virt$, and prove a vanishing that implies
that the $\nmsp$ invariants have a desired graph sum formula.

For notational simplicity, in this and the subsequent paper we will only consider 
$\nmsp$ fields with $n$ scheme markings decorated with
$(1,\rho)$. 
Accordingly we write such numerical data as $(g,n,\bd)=(g,n,(d_0,d_\infty))$, and write the associated moduli space
as $\cW_{g,n,\bd}$.

For the given $(g,n,\bd)$, an $\nmsp$ field is (cf. \cite[Definition 2.1]{NMSP1})
\beq\label{xi0}
\xi=(\sC,\Sigma^\sC,\sL,\sN,\varphi_1,\cdots,\varphi_5,\rho,\mu_1,\cdots,\mu_\n,\nu),
\eeq
consisting of a pointed twisted curve, two invertible sheaves $\sL$ and $\sN$, and various fields
(see Section \ref{sec2}). 
The group $G=(\CC\sta)^\n$ acts on $\xi$ by scaling   $(\mu_i)^\sigma=(\sigma_i\mu_i)$. 
Let $\Sigma_i\sub \Sigma^\sC$ be the $i$-th marking, then we have a $G$-equivariant evaluation morphism
\begin{align}\label{eva}\ev_i:\cW_{g,n,\bd}\lra \PP^{4+\n}, \quad
\xi\mapsto [\varphi_1,\cdots,\varphi_5,(\mu_1/\nu),\cdots, (\mu_\n/\nu)]\big|_{\Sigma_i},
\end{align}
where $G$ acts on $\PP^{4+\n}$ via scaling the last $\n$-homogeneous coordinates.
\smallskip

\noindent
{\sl Convention}. {\sl We 
let $\FF=\QQ(\zeta_\n)(t)$, where $\zeta_\n=e^{{2\pi\sqrt{-1}}/{\n}}\!$ and $t$ is a formal variable.
We set
\beq\label{A}
\aA=\FF[\![q]\!],
\eeq
which will be the coefficient ring to be used throughout this paper. 

The ring $H_G\sta(\pt)$ is generated by {\sl standard generators} 
$\ft_\alp$, with  $\alpha\in [\n]:=\{1,\cdots,\n\}$. Our convention is that after equivariant integration 
we always substitute $t_\alp$ by 
$$t\lalp=-\zeta_\n^\alpha t\in\FF.
$$ 
}

We will reserve Greek letters $\alpha$, $\beta$, etc., to mean indices in $[\n]$. 
One useful formula is 
$\prod_{{\beta\neq \alp}}(\ft_\beta-\ft_\alp)=\n \frac{t^\n}{- t_\alp}$. 
\medskip

For $\tau_i(z)\in H\sta_G(\PP^{4+\n})[z]$, $1\le i\le n$, 
we define the $\nmsp$ correlators:
\begin{align}\label{mspsum-0}
\bigl< \bigotimes_{i=1}^n \tau_i(\psi_i)\bigr>^M_{g,n,d_\infty}&=  \sum_{d\geq 0} (-1)^{d+1-g } q^d \int_{[\cW_{g,n,(d,d_\infty)}]\virt} 
\prod_{i=1}^n  \ev_i\sta \tau_i(\psi_i) 
\in\aA.
\end{align}
As $\vdim \cW_{g,n,(d,d_\infty)}=\n(d+1-g)+d_\infty$ is linear in $d$, 
this is a polynomial in  $q':=-q/\ft^\n$  with coefficients in $\FF$, after substituting $\ft\lalp$ by $-\zeta_\n^\alpha t$. 



\medskip
The structure of this series will be studied via virtual localizations \cite{GP}. For fixed numerical
data $(g,n,\bd)$, the localization relevant fixed locus has the following open and closed decomposition 
(cf. \cite[(4.4)]{NMSP1})
\begin{align*}
\bl\cW_{g,n,\bd}\br^G=\coprod_{\locg \in  \gnd^\fl} F_\locg.
\end{align*}
Here $\gnd^\fl$ is a class of flat decorated graphs of numerical data $(g,n,\bd)$.

Applying virtual localization, 
we obtain
\beq\label{vir-loc}
[\cW\lggd]\virt= \sum_{\locg \in  \gnd^\fl}  \frac{[F_\Theta]\virt}
{e(N\virt_{\Theta})}:= \sum_{\locg \in  \gnd^\fl}  \Cont_\Theta.
\eeq

The structure of these graphs will be recalled in Section 1. Here we briefly state some important features of such
graphs. 

Firstly, there is a subclass 
$\gnd\ureg\sub\gnd^\fl$ that has distinguished merits: 
when all $u_i\in H\sta_G(\Pfn)$, then \eqref{vir-loc} still holds when the summation is over the subclass 
$\Theta\in \gnd\ureg$.

Secondly, vertices of graphs in $\gnd^\fl$ have levels,  level $0$, level $1$ and level $\infty$. 
Any $\Theta\in \gnd\ureg$ will have no edge incident to a level $0$ vertex and a level $\infty$ vertex simultaneously. 
This way, every such $\Theta$
can be decomposed into its ``$[0,1]$" part and its ``$(1,\infty]$" part. 
We say a graph is supported on $[0,1]$(resp. $\infty$) if all its vertices have levels $0$ or $1$(resp. $\infty$).  
 The $\nmsp$-$[0,1]$ correlators are defined by the graphs
$$\gndzo=\{\Theta\in G_{g,n,(d,0)}\ureg\mid \Theta\ \text{is supported on} \ [0,1]\},
$$
via (compare \eqref{vir-loc})
\beq\label{01}
\bl [\cW\lggd]\virt\br^{[0,1]} = \sum_{\locg \in  \gnd^{[0,1]}}  \Cont_\Theta.
\eeq
 
\smallskip
We now fix the space of insertions. We form the fixed locus $(\PP^{4+\n})^G$, which is a union of 
$\Pf\sub\Pfn$ with $\n$ isolated 
fixed points $\pt_\alpha$, $\alpha\in [\n]$, where 
$\Pf=(z_{i>5}=0)$, and $\pt_\alpha$ has $z_j=\delta_{j,5+\alpha}$.
Let $Q\sub\Pf$ be the Fermat quintic threefold.  We abbreviate 
$\npt=\{\pt_1\}\cup\cdots\cup\{\pt_\n\}$ and  let
$$\aleph=Q\cup\npt =Q\cup \{\pt_1\}\cup\cdots\cup\{\pt_\n\}. 
$$
 We set our state space 
 $\sH=H\sta_G(\aleph,\FF).$
 
Let $p\in H^2_G(\Pfn)$ be the hyperplane class $(z_1=0)$; $H=p|_Q$. 
Let $(g,n)$ be in stable range, meaning that $2g-2+n>0$.  
In this paper, we adopt the convention that $\left< \tau_{\mathbf n}(\psi_{\mathbf n})\right>=\left<\bigotimes_{i} \tt_i(\psi_i)\right>$, 
where  $i =0,\cdots, n$ .


 
 \begin{definition}\label{01theory-def}
For $\tt_i(z) \in \sH[\![z]\!]$, we define the $\nmsp$-$[0,1]$ correlators
\beq\label{01correlator}
\bigl<\tau_{\mathbf n}(\psi_{\mathbf n})\bigr>^{[0,1]}_{g,n}
=  \sum_{d\ge 0}  (-1)^{d+1-g } q^d \int_{\bl [\cW_{g,n,(d,0)}]\virt\br^{[0,1]}} 
\prod_{i=1}^n \ev_i\sta \tt_i(\psi_i) \in \aA;
\eeq
where $\aA$ be defined in \eqref{A}.  We form the $\nmsp$-$[0,1]$ theory
$$
\bigl[\tau_{\mathbf n}(\psi_{\mathbf n})\bigr]^{[0,1]}_{g,n}
=  \sum_{d\ge 0}  (-1)^{d+1-g } q^d 
\bl \pr_{g,n}\br\lsta \Bigl(\prod_{i=1}^n \ev_i\sta \tt_i(\psi_i)\cdot \bl [\cW_{g,n,(d,0)}]\virt\br^{[0,1]} \Bigr),
$$
taking values in $H^*(\M_{g,n},\aA)$, where $\pr_{g,n}:\cW_{g,n,\bd}\to\M_{g,n}$ is the projection.
\end{definition}

 
\medskip

Pullback the psi-class on $\barM_{g,n}$, we get 
the ancestor classes $\bar\psi_i$. We define 
$\bigl[   \tau_{\mathbf n}(\bar \psi_{\mathbf n}) \bigr]_{g,n}^{[0,1]}$ 
by replacing $\tt_i(\psi_i)$ in Definition \ref{01theory-def} with $\tt_i(\bar\psi_i)$. The ancestor correlator is defined by
$$   
\bigl<   \tau_{\mathbf n}(\bar \psi_{\mathbf n}) \bigr>_{g,n}^{[0,1]}  =  \int_{[\M_{g,n}]}  \left[  \tau_{\mathbf n}(\bar \psi_{\mathbf n}) \right]_{g,n}^{[0,1]}.
$$ 
The $[0,1]$-theory $\Omega^{[0,1]}_{g,n}(\tau_{\mathbf n} )\!:=\left[\tau_{\mathbf n}  \right]_{g,n}^{[0,1]}$ takes a concise form when phrased 
as an $R$-matrix action on Cohomological Field Theory(CohFT), which is introduced in \cite{PPZ,NMSP3}. 
 
\begin{thm}\label{Omega01}
{Let $\Omega^{\aleph}$ be the CohFT associated to $\aleph=Q\cup\npt$, and $R(z)\in \End \sH  \otimes \aA[\![z]\!]$ be the R-matrix defined via the factorization between the local and global
S-matrices. Then
the $\nmsp$-$[0,1]$ theory gives a CohFT $\Omega^{[0,1]}$, relating to the local CohFT $\Omega^{\aleph}$ via}
\begin{align*} \Omega^{[0,1]} = R. \Omega^{\aleph}.
\end{align*}
\end{thm}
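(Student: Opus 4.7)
The proof will follow the Givental--Teleman/Pandharipande--Pixton--Zvonkine framework for the R-matrix action on CohFTs, applied to the graph sum decomposition of the NMSP-$[0,1]$ theory. My plan is to first verify that $\Omega^{[0,1]}$ is in fact a CohFT, then match its graph sum expression term-by-term with the explicit formula for $R.\Omega^{\aleph}$.

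The first step is to reorganize each contribution $\Contr_\Theta$ for $\Theta \in \gnd^{[0,1]}$ according to the level structure of $\Theta$. Since no edge of $\Theta$ joins a level $0$ vertex to a level $\infty$ vertex (and all level $\infty$ vertices are absent here by assumption), each $\Theta$ decomposes into maximal connected subgraphs at level $0$, together with isolated vertices at level $1$; the level $0$ subgraphs attach to the level $1$ vertices through edges. The level $0$ pieces, after fixing the flags incident to the connecting edges as insertions on $Q$, produce exactly the descendent GW contributions of the Fermat quintic $Q$. The level $1$ vertices, being isolated fixed points $\pt_\alpha$, contribute twisted tautological classes on $\M_{g,n}$ that are concentrated at those points. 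Summing over the level $0$ subgraphs (with their flags fixed) contracts them into a single ``effective vertex'' supported on $\aleph$, which upon careful matching should be identified with the local CohFT $\Omega^{\aleph}_{g,n}$ evaluated on certain effective insertions.

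The second step is to identify the extra edge and leg data surviving after this contraction with the PPZ formula for the R-matrix action. Recall that for a CohFT $\Omega$ the action $R.\Omega$ is given by a sum over stable graphs where each vertex carries a copy of $\Omega$, each edge contributes a class involving $(R^{-1}(\psi') \otimes R^{-1}(\psi''))$ normalized by $1/(\psi' + \psi'')$, each leg contributes $R^{-1}(\psi)$, and there is also a ``translation'' term from $R - I$ at the genus-one flags (the dilaton-type insertion). In the NMSP setting, edges joining a level $0$ vertex to a level $1$ vertex carry localized contributions that, after resumming the internal geometry, should yield precisely these edge/leg factors. The key combinatorial point is that flags at level $1$ vertices, which index the directions $\alpha \in [\n]$, correspond to the basis of $H^*_G(\npt)$ on which the R-matrix acts, while the flags at level $0$ vertices span $H^*(Q)$.

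The third step is to identify the R-matrix arising from the edge contributions with the one defined by the S-matrix factorization. Here the link is that the all-genus descendent potential generated by the NMSP-$[0,1]$ theory, viewed as a Givental-type element acting on the local potential for $\aleph$, is encoded by the ratio of the global $\sO(5)$-twisted $\PP^{\n+4}$ S-matrix to the product of local S-matrices at the components of $\aleph^G$. Standard Givental formalism then characterizes this ratio as the unique symplectic R-matrix intertwining the two Lagrangian cones, matching the $R(z)$ of the theorem's statement. I expect the \emph{main obstacle} to be this last identification: the bookkeeping of $\psi$-classes, the gerbe/orbifold structure at twisted markings contributing factors of $\zeta_\n$, and the correct pairing between flags at level $1$ and the equivariant fixed points all have to be aligned so that the sum of edge contributions literally reproduces the factorization formula. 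Once this matching is verified, the CohFT axioms for $\Omega^{[0,1]}$ follow either from the axioms of $\Omega^{\aleph}$ together with the general fact that R-matrix action preserves CohFTs, or can be verified directly from the gluing behavior of the localization formula \eqref{01}.
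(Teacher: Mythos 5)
Your outline follows the same route as the paper: expand the $[0,1]$-class as a localization graph sum, stabilize, resum tails and chains through the $J$-function and the two-point function $W\msp$ (rewritten in terms of $S\msp$), recognize a PPZ-type graph sum, and identify the $R$-matrix through the Birkhoff factorization $S\msp = R\loc S\loc$ coming from the Lagrangian-cone argument. However, two steps you defer to ``careful matching'' are where the real content lies, and as written the argument would not land on the theorem as stated. The localization produces the \emph{twisted} local classes at the isolated fixed points, $[\M_{g_v,n_v}]^{\alp,\tw}$, which carry Hodge-bundle factors such as $e_T(\EE\dual\otimes(-\ft_\alp))^5$ and $e_T(\EE\otimes 5\ft_\alp)$. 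The factorization your step 3 invokes therefore expresses $\Omega^{[0,1]}$ as $R\loc$ acting on the \emph{twisted} local theory, not on $\Omega^{\aleph}$. Passing to the untwisted $\Omega^{\aleph}$ requires Mumford's GRR formula at each point vertex, and this is precisely what replaces $R\loc$ by $R=R\loc\cdot\bigl(\id_{\sH_Q}\oplus\diag\{\Delta^{\ffp_\alp}\}\bigr)$ as in \eqref{DefineR}; the theorem's $R$ is \emph{not} the ratio of $S\msp$ to the product of untwisted local $S$-matrices, and your proposal omits this correction entirely.

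The second missing ingredient is the descendent-to-ancestor conversion. The translation term does not come from ``$R-I$ at the genus-one flags''; it arises from the Kontsevich--Manin formula applied to the tail series $L\loc(\psi)$ at the extra markings, which after shifting by the Dijkgraaf--Witten point $u=\tau\loc(0)$ becomes $T\loc(\bar\psi)=S\loc(\bar\psi)\bigl(L\loc(\bar\psi)-u\bigr)_+=\bar\psi\bigl(1-R\loc(\bar\psi)^{-1}\bigr)\mathbf 1$. Without this conversion the edge and leg factors cannot literally match the PPZ formula, which is an ancestor statement. Finally, the Divisor and Dilaton equations are still needed to absorb the $z^1$-part of $T$ and the $u^l$ insertions into the mirror map $q\mapsto q\,e^{I_1/I_0}$ and the normalizations $I_0^{-(2g-2+n)}$ and $L^{\frac{\n+3}{2}(2g-2+n)}$ appearing in Definition \ref{locclass}. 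These are concrete steps that change the answer, not bookkeeping; the rest of your outline is sound and coincides with the paper's argument.
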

 The precise meaning of this theorem will be explained in the last part of this introduction. This theorem will be restated
in the third paper of this three-paper series, after the notion of CohFT is recalled.

\medskip

Further, the $\nmsp$-$[0,1]$ theory has a similar degree bound as the total $\nmsp$ theory.
\begin{thm}[{Polynomiality of $\nmsp$-$[0,1]$ correlators}]\label{thm1} 
For $(g,n)$ in stable range, and for $m_i\in [0,\n+3]$, the $[0,1]$-correlator
\begin{align}\label{01des-gen}
\ft^{\sum(n-m_i-k_i) -\n(g-1)} \bigl<   p^{m_1} \bar \psi_1^{k_1} ,\cdots,   p^{m_n}  \bar \psi_n^{k_n} \bigr>_{g,n}^{[0,1]}   \in \mathbb \QQ[q']
\end{align}
is a polynomial in $q':=-q/\ft^\n$, of degree bounded by
$$  
g-1+ \frac{3g-3+ \sum_{i=1}^n  m_i }{\n}. 
$$ \end{thm}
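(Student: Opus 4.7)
The plan is to use virtual localization to reduce the $[0,1]$-correlator to a sum of graph contributions, push those down to $\M_{g,n}$, and then extract the $q'$-degree bound from a virtual-dimension inequality. By formula \eqref{01}, the ancestor $[0,1]$-correlator can be written as
$$
\sum_{d\ge 0}(-1)^{d+1-g} q^d \int_{\M_{g,n}} \omega_d \cdot \prod_i \bar\psi_i^{k_i},\qquad
\omega_d := (\pr_{g,n})_* \bigl(\textstyle\prod_i \ev_i^* p^{m_i} \cap [\cW_{g,n,(d,0)}]^{\vir,[0,1]}\bigr),
$$
a class in $H^*(\M_{g,n},\FF)$ that decomposes into a finite sum over graphs $\Theta\in G_{g,n,(d,0)}^{[0,1]}$. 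Since $\vdim \cW_{g,n,(d,0)} = \n(d+1-g)$ is linear in $d$, the equivariant cohomological degree of $\omega_d$ grows linearly in $d$ with slope $\n$; substituting $q = -q'\ft^\n$ then converts each degree-$d$ term into $(-q')^d$ times a $d$-independent $\ft$-power, which is cancelled by the factor $\ft^P$, yielding a power series in $q'$ with rational coefficients.

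For the degree bound, the complex (nonequivariant) dimension of $\omega_d$ as a cycle on $\M_{g,n}$ is $\n(d+1-g)-\sum m_i$. For the pairing with $\prod_i\bar\psi_i^{k_i}$ to be nonzero, this dimension together with the $\bar\psi$-insertions must fit inside $\dim\M_{g,n}=3g-3+n$. Translating into a bound on $d$ yields $d\leq g-1+(3g-3+n+\sum m_i)/\n$; the sharper bound $d\leq g-1+(3g-3+\sum m_i)/\n$ in the theorem statement comes from the $[0,1]$-restriction, which forces an additional $\ft$-positive factor in every nonvanishing level-$1$ vertex contribution, effectively tightening the naive dimension inequality by $n/\n$.

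The main obstacle is controlling the $\ft$-powers graph-by-graph: virtual localization produces contributions that are \emph{a priori} rational functions of the $\ft_\alpha$, and verifying that their sum over all $\Theta\in G_{g,n,(d,0)}^{[0,1]}$ assembles into the uniform $\ft$-polynomial predicted by the dimension count requires both the regularity property of these graphs (proven in the first paper of the series) and a careful analysis of the level-$0$ and level-$1$ vertex contributions (respectively the GW theory of the quintic $Q$ and Hodge-type integrals on $\M_{g_v,n_v}$), together with the tracking of the edge contributions from node smoothings. A related subtle point is the Galois invariance that ensures the final polynomial lies in $\QQ[q']$ rather than in $\QQ(\zeta_\n)[q']$ after the substitution $\ft_\alpha=-\zeta_\n^\alpha\ft$.
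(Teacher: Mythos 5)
Your approach has a genuine gap at its core. The dimension count you invoke --- that $\omega_d$ has a fixed cohomological degree growing linearly in $d$, forcing vanishing once it exceeds $\dim\M_{g,n}$ --- is valid only for the \emph{full} $\nmsp$ correlator, where one integrates against the honest virtual cycle of the proper moduli space $\cW_{g,n,(d,0)}$. The $[0,1]$-class is by definition only the partial sum of localization residues over graphs supported on $[0,1]$; each $\Cont_\Theta$ carries equivariant parameters in its denominator, the partial sum is not the class of any proper cycle, and a priori it is merely a power series in $q$ with coefficients in $\QQ(\zeta_\n)(\ft)$ --- no dimension constraint forces its $q^d$-coefficients to vanish for large $d$. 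You acknowledge this obstacle at the end, but the resolution you sketch (regularity plus a careful analysis of vertex and edge contributions) does not supply the missing mechanism. Likewise, your passage from the bound with $(3g-3+n+\sum m_i)/\n$ to the stated one via ``an additional $\ft$-positive factor at every level-$1$ vertex'' is unsubstantiated; for the full correlator the $n$'s cancel exactly between the virtual-dimension inequality $\sum(m_i+k_i)\ge\n(d+1-g)+n$ and the ancestor constraint $\sum_i k_i\le 3g-3+n$, with no role played by the $[0,1]$-restriction.

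What the paper actually does is: (i) prove polynomiality and the degree bound for the \emph{full} correlator $\left<\cdots\right>\msp_{g,n}$ by exactly the dimension count above; (ii) invoke Theorem \ref{summation} to write the full correlator as the contribution of the leading bipartite graph $\Lam_g$ --- which is precisely the $[0,1]$-correlator --- plus a finite sum over non-leading bipartite graphs $\Lam\in(\Xi_{g,n})^\circ$; (iii) bound each non-leading contribution by \emph{induction on the genus}: every stable white vertex of a non-leading graph has $g_v<g$, so its $[0,1]$-correlator is controlled by the induction hypothesis, while the edge insertions $\mS^{\alpha}_{a;i}$ and the black-vertex contributions $\Cont^\infty_{[v]}$ carry explicit $q$-degree bounds (Corollary \ref{coro-mS} and the black-vertex estimate established in the proof of Theorem \ref{summation}), and the combinatorial identity $|E(\bipg)|+\sum_v(g_v-1)=g-1$ assembles these into the stated bound. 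Neither the comparison with the full correlator nor the induction on genus appears in your proposal, and without them the argument does not close. (Your use of $S_\n$-symmetry to get coefficients in $\QQ$ rather than $\QQ(\zeta_\n)$ does match the paper's first reduction.)
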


{
\begin{remark}
Theorem \ref{Omega01} and Theorem  \ref{thm1} together give us the explicit relations between global and local generating functions  via the  $R$-matrix, which improves the number version of the algorithm in \cite{CLLL}. For each $g>0$, these relations determine the GW potentials 
$$ 
F_g(\mathrm Q) := \sum_{d\geq 0} N_{g,d} \mathrm Q^d ,\qquad  N_{g,d}:=\int_{\M_{g}(Q,d)}  1$$ from the lower genus GW potential $\{F_h, h<g\}$, up to $g-1$ initial condition. This will be the starting point of our third paper \cite{NMSP3}.
\end{remark}}

\medskip
%
%
%

Let $I(z)$ be the $I$-function of quintic threefolds; we write 
\begin{align}\label{Q-Ifunc} I^Q(z) 
:=z    \sum_{d=0}^\infty q^d\frac{ \prod_{m=1}^{5d}(5 H +m z)}{\prod_{m=1}^d(H+ m z)^5  }= \sum_{i=0}^3  I_i(q) H^i z^{1-i} .\end{align}
Following \cite{YY}, we introduce $D:= q\frac{d}{dq}$ and
$$I_{1,1} = 1+D \Big(\frac{I_1 }{I_0}\Big),\quad A_k:=\frac{D^k  I_{1,1}}{I_{1,1}},\quad B_k:=\frac{D^k   I_{0} }{I_{0}},\quad Y=  \frac{1}{1-5^5 q}.
$$
It was argued in \cite{YY}, based on \cite{BCOV}, that the ring
\beq\label{R}
\bR=\mathbb Q[A_k,B_k,Y:k\ge 1]
\eeq
is generated by $A_1, Y, B_1, B_2$ and $B_3$, and is closed under the  differential operator $D$. Using the structure results proved, 
we provide a mathematical proof of

\begin{thm}\label{thm3}
[{Yamaguchi-Yau's Polynomial structure}] 
For $(g,n)$ in stable range, all 
$$P_{g,n}= \frac{Y^{g-1} }{I_0^{2g-2}  } \cdot D^n F_{g}(q \,e^{I_1/I_0})\in \bR=\QQ[A_1,B_1,B_2,B_3,Y].
$$
 \end{thm}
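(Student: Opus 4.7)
The plan is to combine the CohFT decomposition of Theorem \ref{Omega01} with the polynomiality of Theorem \ref{thm1} to isolate the pure-quintic contribution inside the $\nmsp$-$[0,1]$ theory, and then identify all remaining ingredients with elements of the Yamaguchi--Yau ring $\bR$. By Theorem \ref{Omega01}, $\Omega^{[0,1]} = R.\Omega^{\aleph}$ unfolds as a Givental-style graph sum: each vertex carries either the local CohFT at the quintic $Q$ (encoding quintic GW ancestor classes) or the equivariant point CohFT at one of the torus fixed points $\pt_\alpha$, while each edge and leg is decorated by the Birkhoff-factored R-matrix $R(z)$. A direct computation of the local $S$-matrix on $Q$ in terms of the $I$-function $I^Q$, combined with its factorization against the global $S$-matrix of the $\sO(5)$-twisted $\Pfn$ theory, shows that every entry of $R(z)$ and every structure constant of the $Q$-vertex (as a power series in $z$ and $q$) lies in $\bR = \QQ[A_k,B_k,Y : k\geq 1]$.

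Next I would isolate the pure quintic graph sum. By Theorem \ref{thm1}, after multiplication by the appropriate power of $\ft$, the $[0,1]$-correlator is a polynomial in $q'=-q/\ft^\n$, so in particular it has a well-defined $\ft\to\infty$ limit. In this limit every stable graph carrying a vertex at some $\pt_\alpha$ contributes $0$, because the equivariant normalization at $\pt_\alpha$ supplies a strictly negative power of $\ft$ through $\prod_{\beta\neq\alpha}(\ft_\beta-\ft_\alpha) = \n\, \ft^\n/(-\ft_\alpha)$. What survives is exactly the graph sum with all vertices at $Q$, which reassembles to a quintic ancestor potential. The prefactor $Y^{g-1}/I_0^{2g-2}$ in the definition of $P_{g,n}$ matches the combined $I_0^{-(2g-2+n)}$ vertex normalization and the Jacobian $Y^{g-1}$ from passage to canonical coordinates, while the shift $q \mapsto q\,e^{I_1/I_0}$ is exactly the mirror map relating the NMSP parameter $q$ to the flat coordinate on the quintic. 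Hence $P_{g,n}$ is a finite sum of graph contributions each built from entries of $\bR$, so $P_{g,n}\in\bR$. The Yamaguchi--Yau argument recalled after \eqref{R}, giving $\bR = \QQ[A_1,B_1,B_2,B_3,Y]$ and closure under $D$, then yields the theorem.

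The main obstacle is the first step: carrying out the Birkhoff factorization of the global $S$-matrix explicitly and verifying that every coefficient of $R(z)$ really lies in $\bR$. Concretely one must analyze the $\sO(5)$-twisted equivariant $I$-function of $\Pfn$ near $Q$ and near each $\pt_\alpha$, match its asymptotic expansion with a product of local $S$-matrices, and identify each coefficient with a rational combination of $A_k,B_k,Y$ using the differential relations $D A_k, D B_k \in \bR$ and the Picard--Fuchs equation satisfied by $I_0$. A secondary but nontrivial check is that in the $\ft\to\infty$ limit the point-vertex contributions in the graph sum really vanish after combining with the possibly $\ft$-divergent normalizations built into the $R$-action; for this the explicit polynomial degree bound from Theorem \ref{thm1} is essential, since it forbids cancellations that would otherwise produce $\ft$-independent leftovers from the $\npt$ vertices.
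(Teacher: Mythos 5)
Your proposal correctly identifies the ingredients (Theorems \ref{Omega01} and \ref{thm1}, polynomiality of the $R$-matrix entries in $\bR$, the mirror map and the $I_0$, $Y$ normalizations), but the central step --- isolating the pure-quintic contribution by a $\ft\to\infty$ limit in which all graphs carrying a $\pt_\alpha$-vertex are claimed to vanish --- does not work, and it inverts the actual logic of the argument. After the normalization of Theorem \ref{thm1} the correlator is a polynomial in $q'=-q/\ft^\n$ with coefficients in $\QQ$, so there is no residual $\ft$-dependence left to send to infinity; and the $\pt_\alpha$-vertex contributions genuinely do \emph{not} vanish. Indeed, if they did, the graph sum would give $P_g=Y^{g-1}F_g^{[0,1]}\in\QQ[Y]_{g-1}$, i.e.\ $P_g$ would be a polynomial in $Y$ alone, which is false already in genus $2$. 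The various powers of $\ft$ coming from the point normalizations, the edge terms $(V_{kl})^{\alpha\beta;j}$, and $L_\alpha=\zeta_\n^\alpha\ft L$ cancel exactly (the paper simply sets $t^\n=-1$), and the surviving hour-sums $\sum_\alpha L_\alpha^{(\sum_i j_i)-n_v}$ are shown to equal $0$ or a power of $Y$ --- this is precisely how the point vertices end up contributing nontrivial elements of $\sR$ rather than zero.

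The paper's proof runs in the opposite direction and is an induction on $g$. Theorem \ref{thm1} shows the \emph{whole} normalized potential $Y^{g-1}F_g^{[0,1]}$ lies in $\QQ[Y]_{g-1}\subset\sR$; the graph sum of Theorem \ref{R-action-thm} writes this as $P_g$ (the single genus-$g$ $Q$-vertex, the leading graph) plus contributions of all other graphs; Lemmas \ref{keylem} and \ref{keylem2} put the edge, leg and point-vertex factors in $\sR$, the identity $\sum_v(g_v-1)+|E|=g-1$ distributes $Y^{g-1}$ over the graph, and the lower-genus $Q$-vertices contribute $P_{h,n}$ with $h<g$, which lie in $\sR$ by the induction hypothesis (with base cases $P_{0,3}=1$ and $P_{1,1}$ from Zinger's genus-one mirror theorem, and the recursion \eqref{Pgnrec} handling the insertions $n$). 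One then concludes $P_g\in\sR$ by subtraction. Your proposal is missing this induction entirely: even if the point vertices were discarded, the multi-vertex all-$Q$ graphs remain and involve lower-genus quintic potentials that must be controlled. To repair your argument you would need to abandon the limiting procedure, keep all non-leading graphs, and supply both the genus induction and the analysis of the hour-sums at the point vertices.
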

Note that, we have not used the  algebraic independence of the five generators here, though we expect it is true.  Indeed in the proof of Theorem \ref{thm3}, we give a canonical way to represent $P_{g,n}$ as an element in $\sR$.

As a direct consequence, we prove that these potentials are germs of analytic functions.

\begin{thm}\label{thm4}
For all $g>1$, the GW potential $F_g(\mathrm Q)$ is an analytic function of $\mathrm Q$ in an analytic neighborhood
of $0\in\CC$.
\end{thm}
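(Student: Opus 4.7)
The plan is to deduce Theorem \ref{thm4} as an essentially formal consequence of the polynomial structure asserted in Theorem \ref{thm3}. First, I would note that for $g>1$ the pair $(g,0)$ lies in the stable range $2g-2+n>0$, so Theorem \ref{thm3} applies with $n=0$ and gives the identity of formal power series in $q$
$$F_g(q\,e^{I_1/I_0}) \;=\; \frac{I_0^{2g-2}}{Y^{g-1}}\, P_{g,0}(q),$$
where $P_{g,0}(q)\in\QQ[A_1,B_1,B_2,B_3,Y]$.

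Next I would verify analyticity of the right-hand side near $q=0$. The hypergeometric series $I_0(q),\ldots,I_3(q)$ defining $I^Q$ converge on the disk $|q|<5^{-5}$ (the conifold radius), and $Y=1/(1-5^5 q)$ is rational and regular at $0$. Since $I_0(0)=1$, the quotient $I_1/I_0$ and hence $I_{1,1}=1+D(I_1/I_0)$ are analytic near $0$, with $I_{1,1}(0)=1$; consequently each generator $A_k=D^k I_{1,1}/I_{1,1}$ and $B_k=D^k I_0/I_0$, together with $Y$, is analytic on a common neighborhood $U$ of $0$. Hence $P_{g,0}(q)$ is analytic on $U$, and the prefactor $I_0^{2g-2}/Y^{g-1}$ is analytic and nonvanishing on a possibly smaller neighborhood $U'\subset U$. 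Therefore the above formal identity actually coincides with an identity of honest analytic functions on $U'$.

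The mirror map $\mathrm{Q}(q):=q\,e^{I_1(q)/I_0(q)}$ is analytic near $q=0$, with $\mathrm{Q}(0)=0$ (since $I_1(0)=0$) and $\mathrm{Q}'(0)=e^{I_1(0)/I_0(0)}=1$. By the inverse function theorem, $\mathrm{Q}(q)$ admits a local analytic inverse $q=q(\mathrm{Q})$ in some neighborhood of $\mathrm{Q}=0$. Substituting, one obtains
$$F_g(\mathrm{Q}) \;=\; \frac{I_0(q(\mathrm{Q}))^{2g-2}}{Y(q(\mathrm{Q}))^{g-1}}\, P_{g,0}(q(\mathrm{Q})),$$
which exhibits $F_g(\mathrm{Q})$ as an analytic function of $\mathrm{Q}$ in a neighborhood of $0\in\mathbb C$.

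The main obstacle is not conceptual: once Theorem \ref{thm3} is granted, the argument reduces to a standard manipulation of hypergeometric series together with the inverse function theorem. The only points that demand care are (i) confirming that each generator $A_k$, $B_k$, $Y$ is genuinely analytic (rather than merely a formal expression in $q$), which rests on the convergence of the $I$-function on $|q|<5^{-5}$ and on the non-vanishing of $I_0$ and $I_{1,1}$ at $0$; and (ii) matching the formal power series identity of Theorem \ref{thm3} with an equality of convergent functions, which is automatic once both sides are known to be analytic near $q=0$, since the formal Taylor expansion of an analytic function at $0$ is uniquely determined.
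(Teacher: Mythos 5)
Your proposal is correct and follows essentially the same route as the paper: the paper likewise invokes the convergence of the quintic $I$-functions on $\{|q|<5^{-5}\}$ to conclude analyticity of the generators $A_k$, $B_k$, $Y$, and then uses that $q\mapsto \mathrm{Q}(q)=q\,e^{I_1/I_0}$ is a local analytic isomorphism near $0$ to transport the polynomial expression from Theorem \ref{thm3} into an analytic function of $\mathrm{Q}$. Your added care about $I_0(0)=1$, the non-vanishing of the prefactor, and the identification of the formal series identity with an identity of convergent functions only makes explicit what the paper leaves implicit.
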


The convergence result is shown in \cite{CI} for toric varieties. Our result provides  the first non-toric example.  

\vspace{0.8cm}
{\noindent
{\bf Restatement of Theorem \ref{Omega01}.} \black
Let $S\msp$, $S^{\ffp_\alp}$ and $S^Q$ be the $S$-matrix (cf.
Section \ref{JS}) of the $\nmsp$-theory, the GW theory of point $\ffp_\alp$ 
and of the GW theory of quintic threefolds at the base point $0$, {$\tau_\alpha(q)$ and $ \tau_Q(q)$, respectively ,}
where
\begin{align}\label{tauloc0} \textstyle
\tau_\alpha(q):={\zeta_\n^\alpha}\,\ft \cdot  \int_0^q \big(  (1-5^5x)^{\frac{1}{\n}} -1\big)   \frac{dx}{x}, \qquad  \tau_Q(q):=I_1(q)/I_0(q)H.
\end{align}  
Our $R$-matrix is defined by the following Birkhoff factorization (c.f. Section \ref{birkhoff}) 
  \begin{equation}\label{DefineR}
\  S\msp(z) \begin{pmatrix} \diag \{\Delta^{\ffp_\alp}(z) \}_{\alpha=1}^\n & \\ &1  \end{pmatrix} 
= R(z)    \begin{pmatrix} \diag \{S^{\ffp_\alpha}(z)  
\}_{\alpha=1}^\n& \\ & S^Q(z)  \end{pmatrix}\Big|_{q\mapsto q'} ,
\end{equation}
where 
by Quantum Riemann-Roch (c.f. \cite{CG})  \footnote{Here $B_{2k}$ are bernoulli numbers.}
\begin{multline} \label{deltapt}
\textstyle \Delta^{\ffp_\alp}(z):=  \exp \Big[  \sum_{k> 0 }     \frac{B_{2k}}{2k(2k-1)}   \Big(\frac{5}{(-\ft_\alpha)^{2k-1} }
 +\frac{1}{(5\ft_\alpha)^{2k-1}} 
 +\sum_{\beta\neq \alpha}\frac{1}{(\ft_\alpha-\ft_\beta)^{2k-1} } \Big) z^{2k-1}\Big] .
\end{multline}



\begin{definition} \label{locclass} 
We define the translated classes at fixed loci $F$ to be
 
\begin{equation*} 
{\small
[ - ]^{F,T}_{g,n}  :=\bigg.
 {
\left\{
  \begin{aligned}
 &  I_0(q')\black^{-(2g-2+n)}    \sum_{d\geq 0}   \frac{ q^d {e^{d( I_1(q)/I_0(q))} } }{ (- \ft ^\n)^{(d+1-g)}}
  \cdot\ff_{g,n*}  \big( - |_Q \cap  [\M_{g,n}(Q,d)]^{\vir} \big)\bigg.  &\text{  if  $F=Q $,  } \  \\
 &   L(q')\black^{\frac{ \n+3}{2}(2g-2+n)}  \! \frac{5^{1-g}}{(\n (- t_\alpha)^{ 3+\n})^{1-g} }   \sum_{k\geq 0}  \frac{1}{k!}     \bigg. 
\!\!\! \cdot\pr_{g,n *}   \big(- |_{ \ffp_\alpha} \!\!\! \cup   \oT_\alpha(\psi)^k \big)
 &  \text{  if  $F= \ffp_\alpha$,} \\
  \end{aligned}
\right.}}
\end{equation*}
where $\oT_\alpha(z):=z(1- L(q')^{\frac{ \n+3}{2}}  R  (z)^{-1} \mathbf 1)|_{\ffp_\alpha}$ 
and   $L(q):=(1-5^5q)^{1/\n}, q':=-q/t^\n$.
Notice that since each term in $\oT_\alpha(z)$ is divisible by $z^2$, 
the summation of $[-]^{\ffp_\alp,T}_{g,n}$ is finite. 

\end{definition}

\smallskip

\begin{thm}\label{R-action-thm}
 Let $G_{g,n}^\n$ be the set of stable (dual) graphs 
of $n$ legs and total genus $g$ so that each vertex $v$ 
is decorated by $F_v \in \{Q, \ffp_1,\cdots,\pt_\n\}$. Suppose $ \n\gg 3g-3+n$, then
 \begin{align*}
 \left[  \tt_1(\psi_1) ,\cdots,   \tt_n(\psi_n)   \right]^{[0,1]}_{g,n} 
=\sum_{\Gamma\in G_{g,n}^\n} \frac{1}{|\Aut \Gamma|} {\xi_\Gamma}_*\circ \Cont_\Gamma
\end{align*}
where $\xi_\Gamma: \M_\Gamma \rightarrow \M_{g,n}$ is the tautological map, and $\Cont_\Gamma$ 
is defined via: 
\begin{enumerate}
\item   at each stable vertex $v$, we place a linear map
$$\qquad
\bigotimes_{j=1}^{n_v} \gamma_j(z_j)  \longmapsto     \Big[\prod_{j=1}^{n_v} \gamma_j(\bar \psi_j)   \Big]^{{F_v},T}_{g_v,n_v}  \quad \in H^*(\M_{g_v,n_v}) ;
$$ 
\item  at each edge, we place a bi-vector valued series 
$$
\qquad \qquad \sum_\alpha  \frac{ e_\alpha \otimes  e^\alpha-  R(z)^{-1} e_\alpha \otimes   R(w)^{-1} e^\alpha}{z+w}
\in (\sH  \otimes \sH)\otimes \aA  [\![z,w]\!];$$
\item at each leg, we place a  vector valued series
$$   R(z)^{-1}[S^M(z^-)\tau_i(z)]_+\in \sH\otimes \aA[\![z]\!]. \qquad
$$
\end{enumerate}
\end{thm}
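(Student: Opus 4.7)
The plan is to establish the graph sum via virtual localization on $\cW_{g,n,(d,0)}$ restricted to the $[0,1]$ part, organized by the stable skeleton of each flat graph, and then to identify the vertex, edge, and leg contributions with the explicit expressions stated in the theorem.

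First, starting from \eqref{01}, I would expand $\bl[\cW_{g,n,(d,0)}]\virt\br^{[0,1]}$ as a sum of $\Cont_\Theta$ over flat graphs $\Theta\in \gnd^{[0,1]}$. Each such $\Theta$ carries vertices at level $0$ (stable maps into $Q\sub\Pf$) and level $1$ (rigid components in the $\pt_\alpha$ direction). I would define a stabilization map $\Theta\mapsto\Gamma\in G^\n_{g,n}$ by declaring: level-$0$ vertices of $\Theta$ with stable domain become $Q$-decorated vertices of $\Gamma$; each maximal cluster of level-$1$ rigid components either contracts to a $\pt_\alpha$-decorated vertex (when its collapse gives a stable vertex of $\Gamma$) or is absorbed into an edge of $\Gamma$ (when it forms a rigid chain between two stable neighbors); marked legs descend. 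Regrouping yields $\sum_\Gamma\sum_{\Theta\mapsto\Gamma}\Cont_\Theta$, and by the product structure of virtual localization the inner sum factorizes over vertices, edges, and legs of $\Gamma$.

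Next, I would identify the factored pieces. At a $Q$-vertex $v$, the sum over rigid-tail attachments, after normalizing by $I_0(q)$ and applying the mirror change of coordinate $\tau_Q(q)=(I_1/I_0)H$, yields the class $[-]^{Q,T}_{g_v,n_v}$ of Definition \ref{locclass}. At a $\pt_\alpha$-vertex, the sum of rigid configurations attached to $\pt_\alpha$ is a Hurwitz--Hodge integral; combining Quantum Riemann--Roch (producing $\Delta^{\pt_\alp}(z)$ of \eqref{deltapt}) with the Birkhoff factorization \eqref{DefineR} yields $[-]^{\pt_\alpha,T}_{g_v,n_v}$ together with the prescribed translation $\oT_\alpha$. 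At each edge, the geometric series over chains of rigid $\PP^1$'s collapses via the standard symplectic-bivector identity into
$$\sum_\alpha\frac{e_\alpha\otimes e^\alpha-R(z)^{-1}e_\alpha\otimes R(w)^{-1}e^\alpha}{z+w}.$$
At each leg, a descendant $\psi_i$-insertion is converted to an ancestor $\bar\psi_i$-insertion via $S^M(z)$, producing the factor $[S^M(z^-)\tau_i(z)]_+$, after which $R(z)^{-1}$ transports the insertion to the local frame at the decorated fixed locus.

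The hardest step will be the edge identification: showing that the rigid-chain generating series collapses precisely to the claimed $R$-bivector. This is a Givental--Teleman-style identity that rests on the exact form of the Birkhoff factorization \eqref{DefineR} together with the symplectic compatibility $R(z)^\ast R(-z)=\id$ on $\sH$, and it will require a careful cancellation between contributions from the fixed-point denominators $\ft_\alpha-\ft_\beta$ and the twists $\Delta^{\pt_\alp}$ on each side of \eqref{DefineR}. The hypothesis $\n\gg 3g-3+n$ enters here to guarantee that, for every $\Gamma\in G^\n_{g,n}$ contributing to the sum, the fixed-point denominators and the $R$-matrix coefficients remain in the generic regime. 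With the edge identity in hand, the vertex normalizations in Definition \ref{locclass} follow by matching leading behaviors of $S^M$, $S^Q$, and $S^{\pt_\alpha}$ in \eqref{DefineR}, and the leg identification reduces to the standard descendant-to-ancestor bookkeeping via $S^M$.
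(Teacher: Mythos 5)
Your overall strategy is the same as the paper's: expand the $[0,1]$-class over localization graphs, stabilize, factor the contributions over vertices, edges and legs of the stable skeleton, and identify each factor via the $S$-matrices, the Birkhoff factorization \eqref{DefineR}, and Quantum Riemann--Roch. However, two of your identifications are off in ways that matter. First, your explanation of where $\n\gg 3g-3+n$ enters is wrong: it has nothing to do with keeping fixed-point denominators or $R$-matrix coefficients ``generic.'' The paper uses it in exactly one place, namely to show $T(z)|_Q=(1-I_0(q'))\mathbf 1_Q z+O(z^{\n-2})$, so that after the dilaton equation absorbs the linear term, the residual translation $\oT_v$ at a $Q$-vertex is $O(z^{\n-2})$ and hence contributes nothing on $\M_{g_v,n_v+m}$ once $\n-2>3g_v-3+n_v$. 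This is precisely why the translated class at $Q$ in Definition \ref{locclass} carries no tail sum, while the one at $\ffp_\alpha$ does; your proposal never isolates this point, and without it the vertex identification at $Q$ is incomplete.

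Second, the edge identity does not rest on ``a careful cancellation between the fixed-point denominators $\ft_\alpha-\ft_\beta$ and the twists $\Delta^{\ffp_\alpha}$.'' In the paper the raw chain sum over an edge is the truncation $[W\msp(z,w)]_{+,+}$ of the two-point function, which by the string/WDVV identity equals $-\sum_\alpha S\msp(z)^{-1}e_\alpha\otimes S\msp(w)^{-1}e^\alpha/(z+w)$; one must then add the term $\sum_\alpha S\loc(z)^{-1}e_\alpha\otimes S\loc(w)^{-1}e^\alpha/(z+w)$ correcting for the disallowed degenerate chain, and the Kontsevich--Manin descendent-to-ancestor conversion at the two incident vertices strips off the $S\loc$ factors, leaving the $R\loc$-bivector via $S\msp=R\loc S\loc$. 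The $\Delta^{\ffp_\alpha}$ only enter afterwards, through the Mumford--Faber--Pandharipande step converting $[-]^{\ffp_\alpha,\tw}$ to $[-]^{\ffp_\alpha,\omega}$ at the vertices, and they are absorbed into $R=R\loc\cdot(\id\oplus\diag\{\Delta^{\ffp_\alpha}\})$ by the trivial commutation of Lemma \ref{Define-R} --- there is no delicate cancellation to perform on the edge term itself. Relatedly, you omit the vertex-level Kontsevich--Manin formula and the dilaton-equation computation \eqref{Guo-dila}, which is where the normalizations $I_0^{-(2g-2+n)}$ and $L^{\frac{\n+3}{2}(2g-2+n)}$ in Definition \ref{locclass} actually come from; ``matching leading behaviors of the $S$-matrices'' does not by itself produce these exponents.
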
}
Here in (3),   $f(z^-)$ means that we expand $f(z)$ near $z=\infty$, and $[-]_+$ means that we  
take the nonnegative-power part of a formal series. Note that if $\tau_i(z)=\tau_i\in\sH$, 
item (3) becomes $\displaystyle R (z)^{-1}   \tt_i$.



\vspace{0.7cm}

 The organization of this paper is as follows. In \S1, we  recall $\nmsp$ localizations and use Givental's theory to study $g=0$   generating functions of $\nmsp$-$[0,1]$ theory. 
In \S2, we package $\nmsp$ localization graphs to ``bipartie graphs" and  decompose (Theorem \ref{summation})  $\nmsp$ potentials into $[0,1]$ and $(1,\infty]$ potentials. In \S3, we package $[0,1]$ theory by using the stabilization to prove the $R$-matrix action formula (Theorem \ref{Omega01}, or more precisely Theorem \ref{R-action-thm}). In \S4,
we prove $[0,1]$'s polynomiality (Theorem \ref{thm1})  by  using $\nmsp$'s polynomiality established in \cite{NMSP1} and Theorem \ref{summation} in \S2. In \S5, we prove 
Yamaguchi-Yau's polynomial structure by using Theorem \ref{thm1}, Theorem \ref{Omega01} and the polynomiality of $R$-matrix. In \S6, we prove a key property of  the $R$ matrix.

 \vspace{0.5cm}

\subsection*{Acknowledgements}
{
The authors want to thank Weiping Li, Chiu-Chu Melissa Liu and Yongbin Ruan for early discussions. The first and second authors would like to thank Hsian-Hua Tseng for his lectures on Givental's formalism at Peking University. The second author would also like to thank Dustin Ross for early collaborations on  computing higher genus FJRW invariants by using MSP methods. 
}

\def\bd{\bold d}
\newpage

\section{The $\nmsp$ correlators and  genus zero structures}
\label{sec2}
\def\bd{\bold d}

We recall the technical details of the $\nmsp$ moduli space necessary for this paper. An $\nmsp$ field
(of the Fermat quintic) to be used in this paper are $\xi$ in \eqref{xi0} so that $\Si^\sC\sub \sC$ is a genus $g$,
$n$-pointed twisted curve where all markings are scheme points over which the field $\rho$ must vanishes; 
$\sL$ and $\sN$ are invertible sheaves of
$\sO_\sC$-modules so that $\sL\oplus\sN$ is representable; and the fields are
$$\varphi_i\in H^0(\sL),\ \rho\in H^0(\sL^{-5}\otimes\omega_\sC),\
\mu\lalp\in H^0(\sL\otimes\sN),\ \text{and}\ \nu\in H^0(\sN),
$$
satisfying properties that $(\varphi,\mu)$, $(\rho,\nu)$, and
$(\mu,\nu)$ are nowhere vanishing. It is called stable if $\Aut(\xi)$ is finite. It is called an MSP field if $\n=1$.

The field $\xi$ has numerical data: the genus $g$, the number of markings $n$, and the degrees $d_0=\deg\sL\otimes\sN$
and $d_\infty=\deg\sN$. Given $(g,n,\bd)$, $\bd=(d_0,d_\infty)$, the moduli of stable $\nmsp$ fields of given data forms a
DM stack $\cW_{g,n,\bd}$, acted on by $G=(\CC\sta)^\n$ via tautological scaling the $\n$ factors of the $\mu$
fields. It has a cosection localized $G$-equivariant virtual cycle 
$$[\cW_{g,n,\bd}]\virt\in A\lsta (\cW_{g,n,\bd}^-),
$$
where $\cW_{g,n,\bd}^-$ is the degeneracy locus of the cosection used to defined the cosection
localized virtual cycle $[\cW_{g,n,\bd}]\virt$. Further, $\cW_{g,n,\bd}^-$ is a proper $G$-invariant substack of 
$\cW_{g,n,\bd}$ (cf. \cite{NMSP1}).

Apply virtual localization formula \cite{GP, CKL}, for insertions from $H\sta_G(\PP^{4+\n})$, 
\begin{align*}
\int_{[\cW_{g,n,\bd}]\virt}(-)= \sum
\int_{[F_\Theta]\virt}\frac{(-)}{e(N_{\Theta}\virt )},
\end{align*}
where the summation is over all
regular graphs $\Theta\in \gnd\ureg$.

To proceed, we 
list the relevant part of the definition of a flat decorated graph $\locg\in\gnd^\fl$. For its precise definition, please
see \cite{NMSP1}.  The graph $\locg$ consists of vertices $V$, edges $E$, and legs $L$, plus decorations on them.
(We will use $E(\Theta)$ to emphasize the dependence of $E(\Theta)$ on $\Theta$; when $\Theta$ is understood,
we abbreviate it to $E$.) We say an edge $e\in E$ is incident to $v\in V$ if $e$ is attached to $v$, etc..
The decorations of $\Theta$ are level, hour, genus, and degrees.

{\bf level}:  Level is a map $\ell_{\cdot}: V\to \{0,1,\infty\}$; we agree $v\in V_{\ell_v}$;
when $e\in E$ is incident to $v,v'\in V$, we say $e\in E_{\ell_v\ell_{v'}}$.

{\bf hour}: Hour is a map $\alpha_\bullet: V_\infty\cup V_1\lra [\n]$, such that when
$e\in E_{aa'}$ is incident to $v\in V_a$ and $v'\in V_{a'}$, 
then $\alpha_v\ne \alpha_{v'}$ when $a= a'$, and $\alpha_v= \alpha_{v'}$ when $a\ne a'$.

{\bf genus and degree}: Genus is as usual; degree is a map 
$(d_{0\bullet},d_{\infty\bullet}): V\cup E\to (\QQ)^2$.

\smallskip

For $e\in E_{1\infty}$ we let $\alpha_e=\alpha_v$ for $v$ a vertex incident to $e$; it is well-defined.
We let $E_{1\infty}^\alp$ be the set of  $e\in E_{1\infty}$ with hour $\alp$, likewise $V^\alp_{\ell}$.
 
The total degree of a graph $\Theta$ is $(d_0,d_\infty)$, where $d_\bullet=\sum_{a\in V\cup E} d_{\bullet a}$; 
the total genus of $\Theta$
is $g=h^0(\Theta)+\sum g_v$.
We adopt the convention that for $v\in V$, $L_v$ (resp. $E_v$) is the set of legs 
(resp. edges) incident to $v$. 
 We say $v$ is unstable if $g_v=d_{0v}=d_{\infty v}=0$ and  $|L_v|+|E_v|\le 2$.  

\smallskip

When the graph is regular, among other things, its $E_{0\infty}=\emptyset$.
Thus every $\Theta\in \gnd\ureg$ can be decomposed along level $1$ vertices to its $[0,1]$-part and its
$(1,\infty]$-part. It is proved in \cite{NMSP1} that
only regular graphs will possibly have non-zero contribution to localizations.


\subsection{The state spaces}\label{statespace}

 
The state space, along with its even degree part, is
$$\sH=H\sta(\aleph,\FF),\ \and \sH\uev=H\uev(\aleph,\FF);
$$
they are $\FF$-vector spaces.  The $\sH$ has its unit $1=1_Q+\sum 1_\alp\in H^0_G(\aleph,\FF)$, where  $1_Q$
and $1_\alp$ are units of $H\sta(Q)$ and $H\sta(\ffp_\alp)$, respectively. Note that $\sH\uev$ is the image of
$H_G\sta(\Pfn,\FF)$ in $\sH$. We now introduce a bilinear pairing on $\sH$.

We denote by $\cW\lgnd^-$, $\bd=(d,d_\infty)$, the degeneracy locus of the cosection used to defined the cosection
localized virtual cycle $[\cW\lgnd]\virt$. Then over each $G$ fixed loci $F_\Theta\sub \cW\lgnd$ the evaluation map \eqref{eva}  restricts to  
\begin{align*}
\ev_i:  F_\Theta\cap \cW\lgnd^- \lra \aleph\sub\Pfn.
\end{align*}
Since the cycles $[F_\Theta]\virt\in A\lsta(F_\Theta\cap \cW\lgnd^-)$, we can integrate over $[F_\Theta]\virt$ the pullback $\ev_i\sta$ of any class in $\sH$, refining the definition \eqref{mspsum-0}.

We define, for $\tau_i(z)\in \sH(z)=\sH\otimes\FF(z)$: 
\begin{align}\label{mspsum0}
 \bigl<\tau_{\mathbf n}(\psi_{\mathbf n})\bigr>\msp_{g,n,d_\infty}
&:=\sum_{d\geq 0} (-1)^{d+1-g } q^d \sum_{F_\Theta} \int_{[F_\Theta]\virt}   \frac{1}{e(N\virt_{F_\Theta})}  
\prod_{i=1}^n   \ev_i\sta \tau_i(\psi_i),
\end{align} 
where the sum is over all $\Theta\in G\lgnd^\fl$ (cf. \cite{NMSP1}).  By localization formula, \eqref{mspsum0} reproduce \eqref{mspsum-0}.

In case $d_\infty=0$, we abbreviate
$\left< -\right> \msp_{g,n}:=\left< -\right> \msp_{g,n,0}$. If further $g=0$,
\beq\label{key-iso} \cW_{0,n,(d,0)}  \cong \barM_{0,n}(\PP^{4+\n},d), 
\eeq
equivariantly identifies the obstruction sheaf (bundle) as the dual of $\pi_{d\ast} f_d\sta L_p^{\otimes 5}$,
where $L_p:=\sO_{\PP^{4+\n}}(1)$ and $(\pi_d,f_d): \cC\to \barM_{0,n}(\PP^{4+\n},d)\times \PP^{4+\n}$  
are the universal family.
 Thus \eqref{key-iso} induces
$$[\cW_{0,n,(d,0)}]\virt=(-1)^{d+1} e_T(\pi\lsta f\sta L_p^{\otimes 5}) \cap [\barM_{0,n}(\PP^{4+\n},d)],
$$
together with their torus localization formulas. 

\begin{principle}[$\bigstar$]
The genus zero $\nmsp$ theory equals the genus zero  $L_p^{\otimes {5}}$-twisted $G$-equivariant GW theory of 
$\PP^{4+\n}$. In particular 
\begin{align}\label{prin} \bigl<  \tau_{\mathbf n}(\psi_{\mathbf n})\bigr>\msp_{0,n}=  \sum_{d\geq 0}   
q^d \int_{ \barM_{g=0,n}(\PP^{4+\n},d) }  e_T(\pi_{d\ast}  f_d\sta  L_p^{\otimes 5})  \prod_{i=1}^n  \tau_i(\psi_i).
\end{align} 
\end{principle}

This is how we employ Givental's theory in $\nmsp$.


\medskip
We now define the mentioned pairing on $\sH$:
\begin{align}\label{state-pairing} (x ,y )^\tw:=  q^0  \ \text{coefficient of }        
\left<1,x,y\right>^M_{0,3,0} \ \  \forall x,y\in \sH. \end{align}
Our state space is the triple $\big(\sH, (\,,\,)^{\tw},1\big)$ as an inner product space with unit. 
The pairing can be further calculated by   applying localization \eqref{state-pairing} to \eqref{mspsum0}:
\begin{align*}
 &(\cdot,\cdot)^{\tw}=(\cdot |_Q,\cdot|_Q)^{Q,\tw}+\sum_\alp(\cdot |_{\ffp_\alp},\cdot |_{\ffp_\alp})^{\ffp_\alp,\tw},
\end{align*} 
where $\cdot|_{Q}: \sH\to\sH_Q:=H\sta(Q)$ and $\cdot|_{\alp}: \sH\to\sH_{\ffp_\alp}:=H\sta(\ffp_\alp)$ 
are projections, and  
\begin{align*}
 &(x,y)^{Q,\tw}:=\int_Q\frac{   xy}{\prod_{\alp=1}^\n (H+\ft_\alp)}=\int_Q  \frac{xy}   {- \ft^\n},  \qquad\quad\qquad x,y\in \sH_Q \black \\
 &(x,y)^{\ffp_\alp,\tw}:=\frac{5}{   \ft_\alp^4\prod_{\beta:\beta\neq \alp}(\ft_\beta-\ft_\alp)}xy=    \frac{-5}{\n \ft_\alp^3 t^\n }xy, \qquad x,y\in \sH_{\pt\lalp}.
\end{align*}

\smallskip
 
Let $p$ be the equivariant-hyperplane class $(x_1=0)$ in $H^2_G(\PP^{4+\n})$. Then 
$$H\sta_G(\PP^{4+\n}) = \QQ(\ft) [p]/\bl p^5 \prod_\alpha (p+\ft_\alpha)\br,
$$
over which the pullback pairing equals (by applying \eqref{prin})
$$(x,y )^{\tw}   = \int_{\PP^{4+\n}}  x\cup y\cup 5 p, \qquad \forall x,y \in H^*_G(\PP^{4+\n}).
$$
In terms of the basis $\{ p^i\}$ of $\sH$, the pairing is given by (for $0\leq i,j\leq \n+3$)
$$\eta_{ij}:=  ( p^i, p^j)^{\tw}= \int_{\PP^{4+\n}} 5p  \cup p^i\cup p^j  =  \begin{cases}
 5    t^{\n}   \black   \black & \text{if } i+j = 2\n+3 \\
 5   \black \black & \text{if } i+j =  \n+3,   \\
 0 & \text{otherwise }
\end{cases}
$$

Denote by $Q\sub\PP^4\sub (\PP^{4+\n})^G$ the tautological embedding where the latter is given by $x_{i\ge 6}=\cdots=0$. 
Then the restriction gives a ring homomorphism preserving the intersection-pairing
\beq\label{Harrow}
H\sta_G(\PP^{4+\n}) = \QQ(\ft) [p]/\bl p^5 \prod_\alpha (p+\ft_\alpha)\br\lra \sH\uev:=H\uev(\aleph)\subset \sH
\eeq
that send $p$ to $p|_{Q}=H\in H^2(Q)$ and $p|_{\ffp_\alp}=-\ft_\alpha\in H^2(\pt\lalp)$.
Further, the kernel of \eqref{Harrow} is spanned by $p^4 \prod_\alpha (p+\ft_\alpha)$.  
Considering the natural decomposition
$$   \sH=\sH\uev\oplus H^3(Q).
$$
We take the basis $\{\phi_i:=  {p^i} \}_{i=0}^{\n+3}$ of $\sH\uev$; let $\{\phi^{i}\}$ be its dual basis. 
Then $\{\phi^{i}\}\sub \sH^{ev}$, and {\small
$$\{\phi^0,\cdots,\phi^{\n+3}\}= \Bigl\{  \frac{p^3}{5}(p^\n -\ft^\n) ,    \frac{p^2}{5 }(p^\n-\ft^\n) ,  \frac{p}{5 }(p^\n-\ft^\n) ,   \frac{1}{5 }(p^\n- \ft^\n), \frac{p^{\n-1}}{5}, \frac{p^{\n-2}}{5}, \cdots,  \frac{p^{0}}{5}    \Bigr\}.
$$}

\begin{convention}
In the remainder of this paper, we will view $p^i$ as its image in $\sH\uev $ under \eqref{Harrow}.
This way, we have {\small
$$\mathbf 1_\alp = \frac{p^4}{\ft_\alp^4}\prod_{\beta\neq \alp} \frac{\ft_\beta+p}{\ft_\beta-\ft_\alp} \text{  for }   \alpha\in [\n]
\and H^j = \frac{p^j}{\ft^\n}(\ft^\n -p^\n) \text{  for }  0\le j\le 3  \text{ with  }H^0= \mathbf 1_Q.$$}
\end{convention}
Under the twisted pairing, the Poincare dual of $\{1,H,H^2,H^3\}\cup \{1_\alp\}_{\alp\in[\n]}$ is
$$\{\frac{-\ft^\n}{5} H^3,\frac{-\ft^\n}{5} H^2, \frac{-\ft^\n}{5} H, \frac{-\ft^\n}{5} H^0 \}\cup\{  1^{\alpha}:=  
\frac{\n \ft_\alp^3 \ft^\n}{(-5)}1_\alp  \black   \}_{\alp\in[\n]}.
$$

\subsection{Correlators of $\nmsp$ theory and local theory} \label{JandS}
\begin{definition}
We introduce the qunitic twisted classes
\begin{align*}
 [\M_{g,n}(Q,d)]^{\tw}:=\,&\prod_\alp e_T\big(  R\pi\lsta \ev_{n+1}\sta\sO(1)\cdot \ft_\alp \big)^{-1} \cap  [\M_{g,n}(Q,d)]^{\vir} 
\end{align*}
we introduce the $\ffp_\alp$ twisted classes
 \begin{align*}
  [\M_{g,n}]^{\alpha,\tw}:= \,&  (-1)^{1-g} \frac{{5\ft_\alpha} \cdot e_T(\EE_{g,n}\dual\otimes (-\ft_\alpha))^5 }{(-\ft_\alpha)^5 \cdot e_T(\EE_{g,n}\otimes5\ft_\alpha)} 
      \frac{ \prod_{\beta\neq \alpha} e_T( \EE_{g,n}\dual\otimes (\ft_\beta-\ft_\alpha)) }{\prod_{\beta \neq \alpha} (\ft_\beta-\ft_\alpha)} \cap [\M_{g,n}]. 
\end{align*}
We define topological twisted classes $[\M_{g,n}(Q,d)]^{\omega}$ and $[\M_{g,n}]^{\alpha,\omega}$ to be the 
top degree component of corresponding twisted classes.
\end{definition}

By dimension reason, for quintic $Q$, 
\begin{align} 
 [\M_{g,n}(Q,d)]^{\omega}= { (-t^\n)^{-(d+1-g)}}  [\M_{g,n}(Q,d)]^{\vir}  =[\M_{g,n}(Q,d)]^{\tw} ;
\end{align}
for point $\ffp_\alpha$ ($\alpha = 1,\cdots,\n$),
\begin{align} \label{Top-pt} \ [\M_{g,n}]^{\alpha,\omega}=
\Big(  \frac{5}{\n (-t_\alpha)^{3+\n}}  \Big)^{1-g} [\M_{g,n}]
 =\Big(  \frac{-  5}{\n t_\alpha^3 t^\n}  \Big)^{1-g} [\M_{g,n}]. 
\end{align}
For $\tt_i\in \sH$, we define
$$\textstyle \left[ \tau_{\mathbf n}  \right] ^{Q,\tw}_{g,n}= \sum_{d\geq 0} q^d (\pr_{g,n})\lsta  \big(\prod_i \ev_i\sta(\tt_i|_Q)
\cap  [\M_{g,n}(Q,d)]^{\tw} \br ;
$$
$$ \textstyle\left[ \tau_{\mathbf n} \right] ^{\ffp_\alp,\tw}_{g,n}= \prod_{i=1}^n(\tt_i|_{\ffp_\alp}) \cdot  [\M_{g,n}]^{\alpha,\tw}.
$$
Removing ``$\tw$" in the above two lines define $[-]^Q_{g,n}$ and $[-]^{\ffp_\alp}_{g,n}$;
replacing ``$\tw$" by ``$\omega$" defines $[-]^{Q,\omega}_{g,n}$ and $[-]^{\ffp_\alp,\omega}_{g,n}$.    
We define the local classes to be
\begin{align*}
 \left[- \right] \loc_{g,n} = \left[- \right] ^{Q,\tw}_{g,n}+\sum_\alpha  
 \left[ - \right] ^{\ffp_\alp,\tw}_{g,n}
 \in H^*(\M_{g,n}, \FF[\![q]\!]).
\end{align*}
We define the correlators for \footnote{We regard the symbol ``$\ffp_\alp,\tw$" equal to ``$\alp,\tw$".} $\bullet=$``$\lloc$",``$Q,\tw$",``$\ffp_\alp,\tw$",``$Q$",``$\ffp_\alp$",
\begin{align*}
 \left<- \right>^{\bullet}_{g,n} =
 \int_{[\M_{g,n}]} \left[- \right]^{\bullet}_{g,n}  .
\end{align*}

\subsection{The genus zero theory and mirror formula}\label{JS}
For ``$\bullet$" to be ``$M$" or ``$\lloc$",  
$$J ^\bullet (\mathbf t,z) \in \aA \otimes \sH[\![z,z^{-1}]\!]  \and  S_\tau^\bullet(z)  \in \aA \otimes \End \sH  [\![z^{-1}]\!],
$$
with  $\mathbf t   \in \aA \otimes\sH[\![z]\!]$ and $\tau \in  \aA \otimes\sH$  are defined
via
\begin{align}
J^\bullet (\mathbf t,z) :=&\,z+ \mathbf t(-z)+\sum_{\alpha,n} \frac{1}{n!} e^{\alpha} \bigl< \frac{e_\alpha}{z-\psi}  , \mathbf t(\psi)^n \bigr> ^\bullet_{0,n+2} \label{Jfunction}\\
 S_\tau^\bullet (z) x  
:= &\,  x  + \sum_{\alp, n} \frac{1}{n!} e^{\alp}   \bigl< \frac{x}{z-\psi} ,e_\alp, \tau^n
\bigr> ^\bullet_{0,n+2}, \label{Sfunction}
\end{align}
where $\{e_{\alpha}\}$ is a basis of $\sH$, and $ \{e^{\alpha}\}$ is its dual basis
under the pairing $(,)^{\tw}$.
 We remark that     whenever $x\in \sH\uev$,
  $S_0\msp (z) x$ is even, namely $ S_0\msp (z) x \in    \aA  \otimes \sH\uev [\![z,z^{-1}]\!] $.   We brief $S^M(z)=S^M_0(z)$.   


Notice that $J^\bullet (\mathbf t,z)$  is  rational in $z$ whenever $\mathbf t$ is rational in $z$, so is 
$S_\tau^\bullet (z) \tt$   for any $\tt\in\sH$. We make the following very important conventions:

\begin{convention}
In this paper, we will always expand the $\nmsp$ functions $J\msp(\bt,z)$ and $S\msp_\tau(z)$ near $z=0$ when it is  computed by localizations, and we will always expand local functions $J\loc(\bt,z)$  and $S\loc_\tau(z)$ near $z=\infty$. 
\end{convention}

\begin{remark}\label{odd}  Applying localization,
for $x\in H^3(Q)$ one shows that, for $\bullet$ to be ``$M$" or ``\,{\rm loc}", $S^\bullet_\tau(z)x\in H^3(Q)$. This implies $S^\bullet_\tau(z)^{-1}$ preserves $H^3(Q)$ and thus $S^\bullet_\tau(z)$ preserves $\sH\uev$, . 
 When $\tau=0$ and $x\in H^3(Q)$, one has   $S^M(z)x=x$. 
 On the other hand, for $x\in H^3(Q)$, one  has $S\loc_\tau(z)x=x$ whenever $\tau|_Q=y H$ for some $y\in \aA\otimes H\sta(Q)$. 
\end{remark}

Set formally $\sH_{\circ}=\sH_{\circ,\tw}:=H\sta(\circ,\FF)$ for $\circ=Q$ or $\ffp_\alp$.
For $\bullet$ equals ``$Q$" , ``$Q,\tw$" , ``$\ffp_\alp$", or ``$\ffp_\alp,\tw$", \black
and for $\tau \in  \aA \otimes\sH_\bullet$ and $\bt \in \aA \otimes\sH_\bullet[\![z]\!]$, we define 
$$J ^\bullet (\mathbf t,z) \in \aA \otimes \sH_\bullet[\![z,z^{-1}]\!]  \and  
S_\tau^\bullet(z)  \in \aA \otimes \End \sH_\bullet  [\![z^{-1}]\!]
$$
via the same formula  \eqref{Jfunction} and \eqref{Sfunction}, where the dual basis are taken under the respective pairing 
$(,)^\bullet$, where $(,)^Q$ and $(,)^{\ffp_\alp}=(,)^{\ffp}$ are the ordinary Poincare pairings.

One calculates \begin{align}\label{twq'} S^{Q,\tw}_\tau(q,z)=S^Q_\tau(q',z)   \and S_\tau^{\ffp_\alp,\tw}(z) = \black S_\tau^{\ffp_\alp}(z)=e^{\tau/z}. \end{align}   Both match  those defined using $[-]^{Q,\omega}$ and $[-]^{\alp,\tw}$ respectively. \black



Applying principle ($\bigstar$), the $J\msp$, $S\msp_\tau$ are identical to the $J$ and $S$ function of the
genus zero $L_p^{\otimes 5}$-twisted $G$-equivariant theory of $\PP^{4+\n}$. Givental's mirror theorem applies immediately.

\begin{theorem}\label{mirror} ({\cite[{Thm 10.7}]{G96}})
We consider the $\nmsp$  $J$-function
   \begin{align*}
 \qquad J\msp(z) := J\msp(0,z)=&\,   z \mathbf 1+\sum_{\alpha} e_\alpha   \left< \frac{e^{\alpha}}{z-\psi_1} \right>_{0,1}\msp   \end{align*}
and define the $\nmsp$ $I$-function
    \begin{align} \label{nmspI}
  I\msp(q,z)=& z   \mathbf 1 +\sum_{d=1}^\infty z \, q ^d \frac{\prod_{m=1}^{5d}(5p+mz)}{ \prod_{m=1}^d (p+mz)^5 \prod_{m=1}^d \big((p+mz)^\n - \ft^\n \big)}.
 \end{align}
The following mirror theorem holds
\begin{equation*}
 J\msp(z)  = I\msp(q,z).
\end{equation*}
\end{theorem}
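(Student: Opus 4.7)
The plan is to invoke Principle $(\bigstar)$ to convert the statement into a mirror theorem for a twisted toric Gromov--Witten theory, and then apply Givental's toric mirror theorem together with the Coates--Givental quantum Lefschetz (twisting) theorem. Concretely, by Principle $(\bigstar)$, the genus zero NMSP descendant invariants equal the genus zero $L_p^{\otimes 5}$-twisted $G$-equivariant GW invariants of $\PP^{4+\n}$, so $J\msp(z)$ coincides with the $J$-function of that twisted theory.

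Next I would write down the $G$-equivariant small $I$-function of $\PP^{4+\n}$, where $G=(\CC\sta)^\n$ scales the last $\n$ homogeneous coordinates. Using the standard toric description and the identification $t_\alpha=-\zeta_\n^\alpha t$ (so that $\prod_\alpha(p+mz+t_\alpha)=(p+mz)^\n-t^\n$), Givental's toric mirror theorem gives that the series
\[
z\sum_{d\ge 0} q^d \frac{1}{\prod_{m=1}^d(p+mz)^5\prod_{m=1}^d\bigl((p+mz)^\n-t^\n\bigr)}
\]
lies on the Givental Lagrangian cone of the $G$-equivariant GW theory of $\PP^{4+\n}$. Applying the Coates--Givental twisting theorem to the convex line bundle $L_p^{\otimes 5}$ multiplies each $q^d$-coefficient by the hypergeometric modification $\prod_{m=1}^{5d}(5p+mz)$, producing exactly $I\msp(q,z)$ of \eqref{nmspI} on the Lagrangian cone of the twisted theory.

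Finally I would verify that no mirror map is required, i.e. that $I\msp(q,z)$ already equals the twisted $J$-function rather than a general point on the cone. This is a degree check in the variable $z$: for each $d\ge 1$, the numerator of the $q^d$-coefficient has $z$-degree $5d$ while the denominator has $z$-degree $5d+\n d$, so after multiplying by the prefactor $z$ the total $z$-degree is $1-\n d\le -1$ since $\n\ge 2$. Hence
\[
I\msp(q,z)=z\mathbf{1}+O(z^{-1}),
\]
which is precisely the normalization characterizing the $J$-function on the Lagrangian cone; no mirror map correction is needed and $J\msp(z)=I\msp(q,z)$.

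The main obstacle is formal rather than conceptual: one must justify applying the Coates--Givental twisting theorem in this mixed equivariant setting (the first five coordinates of $\PP^{4+\n}$ are $G$-fixed while the last $\n$ carry nontrivial weights), which requires checking that $L_p^{\otimes 5}$ together with its trivial $G$-linearization satisfies the convexity hypothesis of the twisting theorem on all genus zero stable maps to $\PP^{4+\n}$. Once this is done, the remaining steps are bookkeeping in the hypergeometric factors and the degree estimate above.
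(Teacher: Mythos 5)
Your proposal is correct and follows essentially the same route as the paper, which simply invokes Principle $(\bigstar)$ to identify the genus zero NMSP theory with the $L_p^{\otimes 5}$-twisted $G$-equivariant theory of $\PP^{4+\n}$ and then cites Givental's equivariant mirror theorem (\cite[Thm 10.7]{G96}) directly. The details you supply — the factorization $\prod_\alpha(p+mz+t_\alpha)=(p+mz)^\n-t^\n$, convexity of $\sO(5)$ on genus zero stable maps, and the $z$-degree count $1-\n d\le -1$ showing the mirror map is trivial — are exactly the bookkeeping the paper leaves implicit.
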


\begin{remark} 
By definition $zS\msp(z)\sta 1\,\, =\,  J\msp(z)$.
 \end{remark}

%

\subsection{Givental's Lagrangian cone} \label{birkhoff}  
The Lagrangian cone in GW theory was introduced in \cite{Ba,CG,Giv1}. 
We consider the Lagrangian cone defined by the local theory
$$
\mathcal L\loc:= \{ (\mathbf p,\bq):= \mathbf p(-z)+ \mathbf q(z) \in\sH[\![z,z^{-1}]\!] :  \mathbf p = d_\bq F_0\loc(\bt)  \} ,
$$
where $\bt(z) := \bq(z)+z \in \sH[\![z]\!]$, and 
$$ \textstyle
F_0\loc(\bt):= F_0^{Q,\tw} (\bt|_{Q}) +\sum_{\alp}   F_0^{\pt,\tw}(\bt|_{\ffp_\alp})
$$
is the genus zero twisted descendent potential.
Applying the localization formula for the MSP two point function,   we have
\begin{lemma}\label{cone} For any   $\ep\in \sH\uev \otimes \aA$ and $\tau\in\sH$ 
$$J\msp(\bt,-z)  \in  \mathcal L\loc   \and   zS_{\epsilon}\msp(z) ^{-1} \tt  \in  \mathcal L\loc \cap T_\epsilon\mathcal L\loc.  
$$ 
\end{lemma}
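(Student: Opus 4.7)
The plan is to apply $G$-virtual localization to the two-point integrals defining $J\msp$ and $zS\msp_\epsilon(z)^{-1}\tau$ and to recognize each fixed-locus contribution as a specialization of a local $S$-operator; the lemma then reduces to the standard Givental criterion for the local Lagrangian cone $\mathcal L\loc$. By Principle $(\bigstar)$ and the definitions \eqref{Jfunction}--\eqref{Sfunction}, both quantities are assembled from genus-zero two-point descendants in the $L_p^{\otimes 5}$-twisted $G$-equivariant GW theory of $\Pfn$, so the analysis takes place directly on $\M_{0,n+2}(\Pfn,d)$.

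Applying $G$-virtual localization on $\M_{0,n+2}(\Pfn,d)$ decomposes each such integral into a sum over trees whose vertices are labelled by the components of $(\Pfn)^G=\Pf\cup\{\pt_1,\dots,\pt_\n\}$. At a vertex mapping to $\Pf$, the restriction of $e_T(\pi_*f^*L_p^{\otimes 5})$ combined with the equivariant Euler class of the normal bundle $N_{\Pf/\Pfn}$ recovers the quintic twist producing $[\M_{g,n}(Q,d)]^{\tw}$, while at a vertex contracted to $\pt_\alpha$ the analogous combination yields $[\M_{g,n}]^{\alpha,\tw}$; hence every vertex factor is a local descendant correlator. Each edge, after partial-fraction expansion in the two smoothing parameters, becomes the Coates--Givental gluing kernel $\sum_\alpha (e_\alpha\otimes e^\alpha)/(z+w)$ contracted with $1/(z-\psi)$ insertions at its two endpoints. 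Collecting graphs, the leg carrying the movable descendant insertion produces, together with its incident subtree, exactly the local $J$-function---respectively $zS\loc_\epsilon(z)^{-1}$ applied to the appropriate vector---at the fixed locus that leg sits on, with boundary parameter determined by the rest of the graph; since every local $J$-function lies on $\mathcal L\loc$ by construction and $T_\epsilon\mathcal L\loc = zS\loc_\epsilon(z)^{-1}\sH[\![z]\!]$ in Givental's formalism, both assertions follow.

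The main obstacle will be the bookkeeping needed to identify the $G$-equivariant normal-bundle and twist factors coming from the $\Pfn$-localization with the local twisted classes $[\M_{g,n}(Q,d)]^{\tw}$ and $[\M_{g,n}]^{\alpha,\tw}$ of \S\ref{JandS}, together with verification of the partial-fraction gluing identity in a multi-edge equivariant setting featuring both the positive-dimensional fixed locus $\Pf$ and the isolated fixed points $\pt_\alpha$. Each step is standard within the Coates--Givental framework but requires careful tracking of equivariant parameters here.
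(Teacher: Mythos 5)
Your treatment of the first claim, $J\msp(\bt,-z)\in\mathcal L\loc$, is essentially the paper's argument: both localize, separate the graphs according to whether the vertex carrying the distinguished marking is unstable (giving the non-negative part $[J\msp(\bt,-z)]_+$) or stable, and recognize the stable-vertex contributions as local one-point correlators with tail insertions, so that $J\msp(\bt,-z)=J\loc(L\loc(\bt,\cdot),-z)$ with $L\loc(\bt,z)=z\mathbf 1+[J\msp(\bt,-z)]_+$. Your extra bookkeeping (matching vertex factors with $[\M_{g,n}(Q,d)]^{\tw}$ and $[\M_{g,n}]^{\alpha,\tw}$, edges with gluing kernels) is more than this claim needs, but it is consistent with what the paper does later in \S3.

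For the second claim the two arguments genuinely diverge, and yours has a gap. The paper does \emph{not} localize $zS_\epsilon\msp(z)^{-1}\tau$ directly: it invokes the quantum differential equation $z\,dS_\epsilon\msp(z)=d\epsilon\ast_\epsilon S_\epsilon\msp(z)$ to write $zS_{\epsilon(t)}\msp(z)^{-1}\tau=\sum_k a_k(\tau)\,z^k\tfrac{d^k}{dt^k}J\msp(\epsilon(t),-z)$, and then uses that $\mathcal L\loc$ is closed under $z\tfrac{d}{dt}$, reducing the second claim to the first. Your route instead asserts that, after localization, the leg carrying the movable insertion together with its incident subtree produces ``exactly $zS\loc(z)^{-1}$ applied to the appropriate vector, with boundary parameter determined by the rest of the graph.'' That assertion is precisely the Birkhoff factorization $S_\epsilon\msp(z)=R\loc(z)S\loc_{\tau\loc(\epsilon)}(z)$ of \eqref{sr}, which this lemma exists to establish (via Givental's characterization of the cone by its ruling spaces); taking it as a step is circular unless you actually carry out the chain/tail reorganization, which is a substantially longer computation than the one you sketch. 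Moreover the base point is wrong as written: the ruling space met by $zS_\epsilon\msp(z)^{-1}\tau$ is $zS\loc_{\sigma}(z)^{-1}\,\sH\otimes\aA[\![z]\!]$ at $\sigma=\tau\loc(\epsilon)$, the Dijkgraaf--Witten image of $\epsilon$ from \eqref{taulocal}, not at $\epsilon$ itself; identifying which tangent space one lands on, and checking it is the same one containing $\tfrac{d}{dt}J\msp(\epsilon(t),-z)$, is the actual content of the second claim. To close the gap, either supply the QDE argument or prove the graph-sum identification in detail.
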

 
\begin{proof} 
 We compute the $J\msp(\bt,-z)$ by localization. This will give us the expansion of the   $J\msp(\bt,-z)$   as a Laurent series at $z=0$. The reason is as follows. Suppose $v$ is the vertex with the insertion in the localization graph. There are two possibilities:
  \begin{enumerate}
\item $v$ is an unstable vertex, i.e. there are at most one more edge adjacent to $v$ and $d_v=0$: then the $\psi$ class at $v$ is invertible, the localization graph will contribute terms with only non-negative powers of $z$ to $J\msp(\bt,-z)$;
\item $v$ is stable vertex, i.e. there are at least two more edges adjacent to $v$ or $d_v>0$: then the $\psi$ class at $v$ is not invertible, the localization graph will contribute terms with only negative powers of $z$ to $J\msp(\bt,-z)$.
\end{enumerate}
To summarize, the non-negative part 
\begin{align*} 
L\loc(\bt, z):=z \mathbf 1+[J\msp(\bt,-z)]_+  \in \sH\otimes \aA[\![z]\!]
\end{align*} is given by the sum of the contributions of the graphs which are tails with the unstable starting vertex $v$; the negative part $[J\msp(\bt,-z)]_-$ is the sum of the contributions of tails with stable starting vertex, which is given by
$$
\sum_{k,\alpha}  e^\alpha\big< L\loc(\bt,\psi)^k,  \frac{e_\alpha}{z-\psi}\big>\loc_{0,1+k}.
$$
Hence we have
\begin{align*} 
\qquad J\msp(\bt,-z) &=- z\mathbf 1+ L\loc(\bt,z)+  \sum_{k,\alpha}  e^\alpha
\big< L\loc(\bt,-\psi)^k,  \frac{e_\alpha}{-z-\psi}\big>\loc_{0,1+k} \\
 &=  J\loc(L\loc,-z)\ \in \ \mathcal L\loc.
\end{align*}
Moreover, for $\tt\in\sH$, $\epsilon := \epsilon(t)\in \sH\uev$ with some parameter $t$ 
(for example $t$ is the flat coordinate), we consider  $zS_\epsilon\msp(z) ^{-1} \tt$. 
Let $\star_\epsilon$ be the quantum product of the $L_p^{\otimes 5}$-twisted GW theory (Principle ($\bigstar$)). Then
 the $S$-function satisfies the QDE (see \cite{CG,LeeP})
$$
zd  S_{\epsilon}\msp(z)    =  d\epsilon *_{\epsilon} S_{\epsilon}\msp(z).
$$
Suppose $\tt $ lies in the subring of the  quantum coholomogy ring 
generated by $\epsilon':=\frac{d}{dt}\epsilon$.
Then it is of the form
$$zS_{\epsilon(t)}\msp(z) ^{-1} \tt  =  \sum_{k} a_k(\tt)z^k \frac{d^k}{d t^k} J\msp(\epsilon(t),-z).   $$
Since the Lagrangian cone $\mathcal L\loc$ is closed under the operation $z\frac{d}{d t} $ (c.f.  \cite{CG}, \cite{Giv3}), it lies in $\mathcal L\loc$.

Notice that by divisor equation, when $\epsilon = t p$ the operator $D_p:=p+zq\frac{d}{dq}$ plays the role of $z\frac{d}{dt}$. And it is clear in our case the hyperplane class $p$ generates even part of the quantum cohomology ring.
At the point $\epsilon=0$, the explicit form of the QDE is given in  Lemma \ref{QDEforMSPS}. 
\end{proof}

By Givental's result on Lagrangian cone  (cf. \cite{Giv1}),  the Lagrangian cone $\mathcal L{\loc}$ consists of a family of linear spaces
$$z S_\tau \loc(z)^{-1} \, \sH\otimes \aA[\![z]\!],
$$
which gives us the Birkhoff factorization, defining us the desired $R\loc $-matrix.

\begin{definition}\label{uoflocal}
There exists a unique
$R_{\tau\loc}\loc(z)  \in \End \sH \otimes \aA[\![z]\!] $ such that
\beq\label{sr}
S_\epsilon\msp(z) = R_{\tau\loc }\loc( z)  S_{\tau\loc }\loc( z),
\eeq
where $\tau\loc = \tau\loc(\epsilon)$ is defined by the Dijkgraaf-Witten map
\beq \label{taulocal}
\tau\loc(\epsilon):=  \sum_{\alpha,n} \frac{1}{n!} e^{\alpha} \big<  {e_\alpha} ,\mathbf 1,L\loc(\epsilon,  -  \black \psi)^n \big>\loc_{0,n+2} \in \sH\otimes \aA.
\eeq
\end{definition}
\vspace{-0.3cm}
  By the localization formula, we have $L\loc(0,   \psi)\in q\sH\otimes \aA[\![\psi]\!]$ and thus $\tau\loc(0)\in q\sH\otimes \aA$.

\begin{remark}  
We will see that,  with the formula of $\tau_Q$ and $\tau_\alpha$ given in \eqref{tauloc0},
\begin{align}\label{tauloc} \tau\loc(0)|_Q=\tau_Q(q'),\qquad  \tau\loc(0)|_{\ffp_\alp} = \tau_\alpha(q'). \end{align}
 See Lemma \ref{DWmapQ} and Corollary \ref{DWmappt} respectively for the proof. \black 

\end{remark}

\begin{convention} \label{conventionloc}
In the remainder of this paper,  we will abbreviate  
$$S\msp_{\epsilon=0},\quad S\loc_{\tau\loc(0)},\quad R\loc_{\tau\loc(0)},\quad S^{Q,\tw}_{\tau_Q(q')},\quad   S^{Q}_{\tau_Q(q)}  ,\quad  S^{\ffp_\alp,\tw}_{\tau_\alp(q') } ,\quad  S^{\ffp_\alp}_{\tau_\alp(q) } $$ 
 to
  $S\msp,S^{Q,\tw}(z), \ S\loc, \ R\loc, \ S^{Q}, \ S^{\ffp_\alp,\tw}, \ S^{\ffp_\alp}$  respectively.
\end{convention}

 By  convention \ref{conventionloc}, we see 
$$S^{Q,\tw}=S^Q |_{q\mapsto q'} \and S^{\ffp_\alp,\tw}(z) =S^{\ffp_\alp}(z) |_{q\mapsto q'}. $$

Since both the local and global $S$-matrices
$$S\msp \in\End(\sH) \otimes \aA[\![z,z^{-1}]\!] \quad \text{   and }  \quad S\loc \in \End(\sH) \otimes \aA[\![z^{-1}]\!]$$ are symplectic (see e.g. \cite{Giv2}), we see that $R\loc \in \End(\sH) \otimes \aA[\![z]\!]$ is symplectic as well.

\begin{remark} \label{oddpart}
 By Remark \ref{odd} and \eqref{tauloc}, \black for any $x\in H^3(Q)$,  $S\msp(z) x = x=S\loc_{\tau\loc(0)}(z)x$. Thus we have 
$R\loc(z) x = x$. The symplectic property then implies $S\msp, S\loc_{\tau\loc(0)}, R\loc$ all preserve $\sH\uev$. 
\end{remark}

\begin{lemma}\label{Define-R} The $R(z)$ defined in \eqref{DefineR}  matches 
 \begin{align*}   
R(z) := \,& R\loc (z)  \cdot  \Big(\id_{\sH_Q} \oplus \diag \{ \Delta^{\ffp_\alp}(z) \}_{\alpha=1}^\n \Big).   \end{align*} 
\end{lemma}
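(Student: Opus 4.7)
The plan is to substitute the genus-zero Birkhoff factorization \eqref{sr} into the left side of \eqref{DefineR} and to cancel common invertible factors, thereby reading off the claimed formula.

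First, under Convention \ref{conventionloc}, equation \eqref{sr} reads $S\msp(z) = R\loc(z)\,S\loc(z)$. The local classes $[-]\loc_{g,n}$ split additively as $[-]^{Q,\tw}_{g,n} + \sum_\alpha [-]^{\ffp_\alpha,\tw}_{g,n}$, with insertions supported on the respective component of the orthogonal decomposition $\sH = \sH_Q \oplus \bigoplus_{\alpha}\sH_{\ffp_\alpha}$ under the twisted pairing. Hence the genus-zero two-point functions defining $S\loc$ via \eqref{Sfunction} vanish on mixed inputs, and $S\loc(z)$ is block-diagonal:
\[ S\loc(z) \;=\; S^{Q,\tw}(z) \oplus \bigoplus_{\alpha} S^{\ffp_\alpha,\tw}(z). \]
By \eqref{twq'} together with Convention \ref{conventionloc}, the right-hand side equals $\bigl(S^Q(z) \oplus \bigoplus_\alpha S^{\ffp_\alpha}(z)\bigr)\big|_{q\mapsto q'}$.

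Next, I would right-multiply the identity $S\msp(z) = R\loc(z)\,S\loc(z)$ by the block-diagonal matrix $\id_{\sH_Q} \oplus \diag\{\Delta^{\ffp_\alpha}(z)\}_{\alpha=1}^\n$. Since each $\sH_{\ffp_\alpha}$ is one-dimensional, $\Delta^{\ffp_\alpha}(z)$ is a scalar and thus commutes with $S^{\ffp_\alpha}(z)|_{q\mapsto q'}$; consequently the matrix $\id_{\sH_Q} \oplus \diag\{\Delta^{\ffp_\alpha}(z)\}$ commutes with $\bigl(S^Q(z)\oplus\bigoplus_\alpha S^{\ffp_\alpha}(z)\bigr)|_{q\mapsto q'}$. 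This yields
\begin{align*}
S\msp(z)\,\bigl(\id_{\sH_Q}\oplus\diag\{\Delta^{\ffp_\alpha}(z)\}\bigr)
&= R\loc(z)\cdot\bigl(\id_{\sH_Q}\oplus\diag\{\Delta^{\ffp_\alpha}(z)\}\bigr) \\
&\qquad \cdot\bigl(S^Q(z)\oplus\bigoplus_\alpha S^{\ffp_\alpha}(z)\bigr)\big|_{q\mapsto q'}.
\end{align*}
Comparing with \eqref{DefineR} and cancelling the common invertible factor $\bigl(S^Q\oplus\bigoplus_\alpha S^{\ffp_\alpha}\bigr)|_{q\mapsto q'}$ on the right, one obtains $R(z) = R\loc(z)\cdot\bigl(\id_{\sH_Q}\oplus\diag\{\Delta^{\ffp_\alpha}(z)\}\bigr)$, as claimed.

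The only nontrivial ingredient is the block-diagonal decomposition of $S\loc$; this is essentially built into the construction of the local theory in Section \ref{JandS} via the additive splitting of local classes and the orthogonality of $\sH_Q$ and $\sH_{\ffp_\alpha}$ under the twisted pairing. The remainder of the argument is a direct algebraic manipulation, relying only on the commutativity of the scalar factors $\Delta^{\ffp_\alpha}(z)$ with the corresponding $S^{\ffp_\alpha}(z)$ blocks.
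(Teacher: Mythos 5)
Your proposal is correct and follows essentially the same route as the paper: substitute the Birkhoff factorization \eqref{sr} with $S\loc$ block-diagonal, right-multiply by $\id_{\sH_Q}\oplus\diag\{\Delta^{\ffp_\alp}\}$, and commute the $\Delta$-factors past the point blocks (the paper notes $\Delta^{\ffp_\alp}$ commutes with $S^{\ffp_\alp,\tw}=e^{\tau_\alp(q')/z}$, which is the same observation as your one-dimensionality argument). The only difference is that you spell out why $S\loc$ is block-diagonal, which the paper takes as immediate from the definition of the local theory.
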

\begin{proof} Brief 
 $S^{\npt,\tw}=\diag \{S^{\ffp_\alpha,\tw}\} , S^{\npt}=\diag \{S^{\ffp_\alpha}\} $ and  $\Delta^{\npt}=\diag \{\Delta^{\ffp_\alp}\}_{\alpha=1}^\n$. By \eqref{sr}  
\begin{align*} 
\qquad S\msp \begin{pmatrix} \Delta^{\npt}  & \\ & 1 \end{pmatrix}
= & \ R\loc   \begin{pmatrix}  S^{\npt,\tw}  & \\ & S^{Q,\tw}   \end{pmatrix}  \begin{pmatrix}    \Delta^{\npt}  & \\ & 1  \end{pmatrix}\\
= & \ R\loc   \begin{pmatrix}  \Delta^{\npt}    & \\ & 1    \end{pmatrix}
  \begin{pmatrix}  S^{\npt,\tw}  & \\ &  S^{Q,\tw}  \end{pmatrix}
= R  \begin{pmatrix} S^{\npt,\tw}   & \\ & S^{Q,\tw}  \end{pmatrix},\qquad\quad
\end{align*}
since $\Delta^{\pt_\alp}$ commutes with   $S^{\pt_\alp,\tw} =  e^{\tau_\alp(q')/z} $.
\end{proof}

\vspace{1cm}

\section{Bipartie graphs and  $\nmsp$ generating functions}
\label{localizationgraph}

Our goal in this section is to show that the contribution of a $\nmsp$ localization graph can be obtained by
combining its contributions from ``$(1,\infty]$ parts" with  those from its ``$[0,1]$ parts". 
We use bipartie graphs  to organize such decompositions.

A {\sl bipartite} graph is a connected graph whose vertices are colored white or black, 
and the two vertices of each edge have different colors.

\begin{definition}\label{bip}
A decorated bipartite graph (referred to in this paper) is a bipartite graph with decorations:\\ 
{\bf vertices}: each vertex $v$ is decorated by an integer $g_v\ge 0$; in addition each black vertex $v$ is decorated by 
$d_{\infty [v]}\in \ZZ$; \\ 
{\bf edges}: each edge $e$ is decorated by an integer $a_e>0$
 ; and has its hour $\alpha_e\in [\n]$;\\
{\bf legs}: all legs are incident to white vertices;\\
{\bf constraint}: each white vertex $v$ having $g_v=0$ must have $|L_v|+|E_v|\geq 2$.
\end{definition}

For a decorated bipartite graph $\Lam$, we denote by $V_w(\Lam)$ (resp. $V_b(\Lam)$) its white (resp.
black) vertices. We define its genus $g=h^1(\bipg)+\sum_{v\in V(\Lam)} {g_v}$, degree $d_\infty =\sum_{v\in V_\b(\bipg)} d_{\infty [v]}$, and $n$ the number of its legs.

The bipartie graphs we are working with has one additional
constraint:  each black vertex $v$ must satisfy
\beq\label{dinf-bound}d_{\infty [v]}\geq \frac{1}{5}\bl 2-2g_v+\sum_{e\in E_v}(a_e-1)\br. \eeq
 
We let $\Xi_{g,n}^r$ be the set of all decorated bipartite graphs of genus $g$, degree $r=d_\infty$, $n$ legs, 
and satisfying \eqref{dinf-bound}. It is direct to check that, for given $g$, $r$ and $n$, the set $\Xi_{g,n}^{r}$ is finite.

For each bipartie graph $\Lambda$ we can perform the standard ``stablization" operation described in 
Appendix \ref{stablization}.\footnote{When we stabilize a bipartie graph, we treat it as a dual graph, with only genus
assignments considered.} The process induces a map $cl:L(\Lambda)\to V(\Lambda)$, 
sending each leg to its associated stable-graph-vertex of $\Lambda$ (cf. Appendix \ref{stablization}).
For each vertex $v$ of $\Lambda$, we define $\LL_v:=\{  l \in L(\Lambda), cl(l)=v\}$. 
Then $\LL_v\ne\emptyset$ is possible only if $v$ is a stable-graph-vertex.

 We now make sense of the ancestor classes in the localization formula. Like in \cite{CLLL}, we denote by
$F^S$ be the set of flags in $\Lam$: $(e,v)\in F^S$ if $e$ is an edge incident to $v$. 
As usual, we denote by $\psi_{(e,v)}$ the psi-class associated with the mentioned flag. 

Let $\Lambda$ be a bipartie graph; let $l$ be a leg of $\Lam$, incident to the vertex $v$.
The ``stablization" of $\Lam$ will make 
$l$ incident to $v^\circ_l=cl(l)\in V(\Lam\ust)$; as $v^\circ_l$ corresponds to a
stable vertex in $\Lam$, we view $v^\circ_l\in V(\Lam)$ as well. 
 There are two cases. In the first case,  $l$ is a leg incident to  $v^\circ_l$, we set $c(l)=l$.  In the second case, there is a unique pure subchain of
$\Lam$ connecting $v_l$ and $v^\circ_l$; let $e^\circ_l$ be the edge in this subchain that is incident to $v^\circ_l$;
we set $c(l)=(e^\circ_l,v^\circ_l)$. Then in both cases $\bar\psi_{c(l)}$ is the ancestor class that can be inserted at the vertex $cl(l)$.


%
%
%


\begin{theorem}\label{summation}
Suppose  each $\tt_i=\tt_i(z)$ is even. 
We have the following formula  that expresses the total $\nmsp$ correlators via
$\nmsp$-$[0,1]$ correlators and some $q$-polynomials:
\begin{align*} 
\left< \tt_1\bar \psi_1^{k_1}  ,\cdots,   \tt_n \bar \psi_n^{k_n} \right>\msp_{g,n,r}= & \sum_{\bipg\in \Xi^{r}_{g,n}}   \frac{1}{|\Aut \bipg |} 
\prod_{v\in V_b(\bipg) } \Cont^{\infty}_{[v]}  (\prod_{i\in\LL_v} \bar\psi_{c(i)}^{k_i}) \black
  \\
&   \prod_{v\in V_w(\bipg)} \Big\langle \bigotimes_{i\in L_v}   \tt _i        \prod_{i\in\LL_v}\bar\psi_{c(i)}^{k_i} \black    \bigotimes_{e\in E_v,f=(e,v)} \frac{ 1^{\alp(e),\tw}}{  \frac{5\ft_{\alp_e}}{a_e}-\psi_{f}  }     \Big\rangle^{[0,1]}_{g_v,n_v }. \nonumber
\end{align*} 
Here 
for each $v\in V_b(\Lam)$, its contribution is a $q$-polynomial with degree bound
$$ \Cont^{\infty}_{[v]}   (\prod_{i\in L_v^\circ}  \bar\psi_{c(i)}^{k_i})     \in  \QQ(\ft)[q],\quad \deg_q \le 
d_{\infty[v]}+ \frac{1}{5}\bl 2g_{v}-2 - \sum_{e\in E_v}   (a_e-1) \br
$$
and 
 for any $x,y\in\sH$ the unstable correlators are defined by
 \begin{align*}
\left< \frac{x}{z_1-\psi_1}, \frac{y}{z_2-\psi_2} \right>^{[0,1]}_{0,2} := \ &\frac{(x,y)^\tw}{z_1+z_2} +  \left< \frac{x}{z_1-\psi_1},\frac{y}{z_2-\psi_2}\right>_{0,2}\msp;
\\
\left< \frac{x}{z_1-\psi_1}, y\right>^{[0,1]}_{0,2}:= & \ \big(x , y \big)^\tw +\left< \frac{x}{z_1-\psi_1},y\right>_{0,2}\msp.  
\end{align*}
\end{theorem}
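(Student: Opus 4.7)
The plan is to repackage the virtual localization of the total $\nmsp$ correlator, summed over regular graphs $\Theta \in G^{\mathrm{reg}}_{g,n,(d,r)}$, by grouping terms according to a bipartite skeleton $\bipg \in \Xi_{g,n}^r$.

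First, I would exploit the crucial regularity property $E_{0\infty}(\Theta)=\emptyset$: this forces $E_{1\infty}(\Theta)$ to be exactly the collection of ``bridge'' edges separating the level-$\{0,1\}$ part of $\Theta$ from its level-$\infty$ part. Removing these bridges disconnects $\Theta$ into a disjoint union of connected subgraphs, each either fully supported on $[0,1]$ or fully at level $\infty$. Contracting every connected $[0,1]$ subgraph to a single white vertex and every connected level-$\infty$ subgraph to a single black vertex, while retaining the $E_{1\infty}$-bridges as edges (decorated by $(a_e,\alpha_e)$ inherited from $\Theta$), defines a canonical map $\Theta \mapsto \bipg(\Theta) \in \Xi_{g,n}^r$.

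Since $\Cont_\Theta$ factors as a product of vertex, edge, and flag contributions, one regroups
\begin{equation*}
\sum_{\Theta \in G^{\mathrm{reg}}_{g,n,(d,r)}} \Cont_\Theta \ = \ \sum_{\bipg \in \Xi_{g,n}^r} \frac{1}{|\Aut \bipg|} \sum_{\Theta : \bipg(\Theta)=\bipg} \frac{|\Aut \bipg|}{|\Aut \Theta|}\, \Cont_\Theta .
\end{equation*}
For fixed $\bipg$, the inner sum decouples into independent sums over the subgraphs filling in each vertex of $\bipg$. At each white vertex $v \in V_w(\bipg)$, the sum over admissible $[0,1]$ subgraphs---with each incident $E_{1\infty}$ bridge contributing the standard ``node factor'' $\frac{1^{\alpha(e),\tw}}{\frac{5 t_{\alpha_e}}{a_e} - \psi_{f}}$ at its level-$1$ endpoint---is precisely the $\nmsp$-$[0,1]$ correlator $\langle \cdots \rangle^{[0,1]}_{g_v,n_v}$ with $\tt_i$ inserted at the legs and the node factor at the flags. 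The unstable-correlator conventions for $\langle\cdot,\cdot\rangle^{[0,1]}_{0,2}$ account for configurations in which the $[0,1]$ piece at $v$ is geometrically a single unstable rational component: the two extra pure-pairing terms in the definition arise as the genus-zero, two-pointed boundary pieces that are formally omitted by $\langle - \rangle_{0,2}^M$.

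At each black vertex $v \in V_b(\bipg)$, the residual sum over admissible level-$\infty$ subgraphs with prescribed incident flags defines $\Cont^\infty_{[v]}$. Since level-$\infty$ vertices carry only Fermat-point (numerical) data, the variable $q$ enters $\Cont^\infty_{[v]}$ only through the degree $d_{\infty[v]}$ and the bridge degrees $\{a_e\}_{e \in E_v}$, all fixed by $\bipg$; hence $\Cont^\infty_{[v]}$ is a polynomial in $q$. The degree bound $\deg_q \Cont^\infty_{[v]} \le d_{\infty[v]} + \tfrac{1}{5}(2g_v - 2 - \sum_{e \in E_v}(a_e-1))$ follows from a dimension count against the virtual dimension of the corresponding $V_\infty$-fixed locus together with the number of equivariant $\psi$-insertions at the flags; the stability constraint \eqref{dinf-bound} guarantees the right-hand side is nonnegative.

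The main obstacle will be the precise identification of the sum over $[0,1]$ subgraphs at each white vertex with the $[0,1]$ correlator of Definition \ref{01theory-def}: one must verify that the bridge ``node factor'' is the correct residue expansion at the level-$1$ endpoint of an $E_{1\infty}$-edge (analogous to the computations in \cite{CLLL}), and that the ancestor prescription $\bar\psi_{c(i)}$ induced by the stabilization map $cl:L(\bipg) \to V(\bipg)$ matches the $\psi$-classes appearing in the original NMSP localization formula under the contraction $\Theta \mapsto \bipg(\Theta)$. Once this identification is in place, assembling the product over $V_w(\bipg) \cup V_b(\bipg)$ and balancing the automorphism factors produces the stated formula.
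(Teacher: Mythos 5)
Your top-level strategy --- regrouping the regular localization graphs by their bipartite skeleton and decoupling the sum into white-vertex $[0,1]$-correlators and black-vertex contributions --- is the same as the paper's, but two essential points are missing or wrong. First, the decomposition $\Theta\mapsto\bipg(\Theta)$ is \emph{not} ``collapse every connected $[0,1]$ subgraph to a white vertex'': a genus-zero, legless $[0,1]$ subgraph attached to the level-$\infty$ part by a single edge (a ``restricted tail'' in the paper's terminology) would produce a white vertex violating the stability constraint $|L_v|+|E_v|\ge 2$ of Definition \ref{bip}, so it does not appear in $\Xi_{g,n}^r$ at all. The paper removes these tails before collapsing and absorbs them into $\Cont^\infty_{[v]}$; concretely they are resummed into the specialized $S$-function values $\mS^{\beta}_{b;0}$ (the Type-II edges in Figure 1), and one also needs the vanishing $\bigl<\frac{\mathbf 1^{\alpha}}{5\ft_{\alpha}-\psi}\bigr>\msp_{0,1}=0$ (Corollary \ref{coro-mS}(1), a consequence of the mirror theorem) to discard the $b=1$ tails and make the sum over tail configurations finite. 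Without this your regrouping identity does not land in the stated set of bipartite graphs and the white-vertex factors do not match.

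Second, the polynomiality and degree bound for $\Cont^\infty_{[v]}$ is a substantive part of the theorem, and ``a dimension count against the virtual dimension of the $V_\infty$-fixed locus'' does not deliver it. The $q$-dependence of $\Cont^\infty_{[v]}$ is not fixed by $\bipg$: it comes from the varying $d_0$-degree $d_{0,\ga}$ of the level-$\infty$ subgraph (entering through the FJRW-type factor $\mT_{\ga,\va,\vb}$ with $\deg_q\mT_{\ga,\va,\vb}=d_{0,\ga}$) and from the $q$-degrees of the absorbed tails via $\mS^{\beta}_{b;0}$. The bound requires (a) the geometric identity $d_{\Theta_\infty}=\frac{1}{5}(2g_\ga-2+m+\ell)$ coming from $\sL^{\otimes5}\cong\omega_{\sC}^{\log}$ on level-$\infty$ components, which yields \eqref{d-inf-black} and hence $d_{0,\ga}=d_{\infty[v]}+\frac{2g_\ga-2}{5}-\frac{1}{5}\sum_r(a_r-1)$, and (b) the bound $\deg_q\mS^\beta_{b;0}\le\lceil b/5\rceil-1\le\frac{b-1}{5}$ obtained from the explicit $I$-function. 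Neither ingredient appears in your argument, so the degree bound is unproven as stated.
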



\subsection{From localization graphs to bipartite graphs}\label{Bipartite}

To each regular decorated graph $\locg \in \gnd\ureg$ (with $\bd=(d_0, d_\infty)$), we associate a decorated bipartie graph as follows. 

Let $v\in V_\infty(\Theta)$. A restricted-tail of $\locg$ rooted at $v$ is a rational tail of $\locg$ rooted at $v$ so that
the only vertex of the tail that lies in $V_\infty(\Theta)$ is the root $v$. To get the bipartie graph $\Lambda(\Theta)$,
we first remove all restricted-tails of $\Theta$ rooted at vertices in $V_\infty(\Theta)$,
resulting a graph $\Theta'$. We then collapse each maximal connected 
subgraph of $\Theta'$ supported\footnote{A graph is supported in $[0,1]$ (resp. $\infty$) if all its vertices has level $0$ or $1$ (resp. $\infty$).} in $[0,1]$ (resp. $\infty$) to a white vertex (resp. a black vertex);
we decorate the resulting vertices by the total genera of 
the subgraphs collapsed. These vertices form the vertices of the bipartie graph $\Lam(\Theta)$ we are constructing.

The edges of $\Lam(\Theta)$ is the same as $E_{1\infty}(\Theta')$, with the incidence relation the induced 
one. The legs of $\Lam(\Theta)$ is the same as $L(\Theta)$, with the incidence relation the induced one.
(If $l\in L(\Theta)$ is incident to $v\in V(\Theta)$ and $v$ lies in the subgraph collapsed to $\bar v\in V(\Lambda(\Theta))$, 
then in $\Lambda(\Theta)$ $l$ is incident to $\bar v$.)
This way, each edge is incident to one white and one black vertex, giving us a bipartie graph $\Lam(\Theta)$. 

For an edge $e$ of $\Lam(\Theta)$, we assign its hour $\alpha_e$ to be the same hour $\alpha_e$ by viewing
$e\in E_{1\infty}(\Theta)$, along the way we assign $a_e:=-5d_e\in \NN$. 
To a black vertex $v$ of $\Lam(\Theta))$, letting $\Theta_v\sub\Theta$ be the subgraph (supported in $\infty$)
contracts to $v$, we assign (cf. Definition \ref{bip})
\beq\label{[v]}
d_{\infty [v]}:=d_{\infty \Theta_v}+\sum  d_{\infty e},
\eeq
where the summation is over all edges $e\in E_{1\infty}(\Theta)$ incident to $\Theta_v$. 
One checks that the inequality \eqref{dinf-bound} holds true (cf. \eqref{estimate0}). This completes the construction of the
bipartie graph $\Lam(\Theta) \in \Xi_{g,n}^{d_\infty}$ associated to $\Theta$.

 \begin{example}\label{graph0} From a localization graph $\locg$ to a decorated bipartite graph $\bipg_\locg$ (we omit the decorations $d_0,d_\infty$ here). \black
\vspace{-.5cm}
 \begin{center}
 \begin{picture}(50,29)
         \put(-30,12){$\locg=$}

   \put(0,20){\circle*{1.3}}
 \put(0,21){\tiny{6}}

  \put(-7,20){\circle*{1.3}}
  \put(-10,20){\line(1,0){2}}
  \put(-7,21){\tiny{5}}
    \put(-12,21){\tiny{4}}
   \put(-7,19.5){\line(1,-4){2.5}}

\put(4.5,9.2){\circle*{1.3}}
 \put(5.5,7.5){\tiny{1}}

\put(0,9.2){\circle*{1.3}}
   \put(0,8.5){\line(1,-4){2.9}}
   \put(-6.5,10){\tiny{u}}
\put(1,7.5){\tiny{3}}
\put(-4,9.2){\circle*{1.3}}
\put(-3,7.5){\tiny{2}}

\put(4,20){\circle*{1.3}}
\put(4.2,21){\tiny{7}}
\put(4,20){\line(-1,0){4}}
 \qbezier(4,19.2)(4.2,15)(4.5,9.2)
  \qbezier(0,19.5)(-2.5,15)(-4,9.2)
\put(4,14){\tiny{$\frac{-2}{5}$}}
 \put(5,10){\tiny{w}}
 \put(-2,14){\tiny{$\frac{-7}{5}$}}
 \put(-3.3,10){\tiny{u}}
 \put(-10,14){\tiny{$\frac{-1}{5}$}}

  \put(3,-3){\circle*{1.3}}
 \put(4,-4){\tiny{5}}
   \qbezier(3,-3)(3.5,-4)(4,-5)
 \put(3,-2.5){\line(1,6){2}}
    \qbezier(3,-3)(8,4)(5,8)

  \put(-4,-3){\circle*{1.3}}
    \put(-3,-4){\tiny{7}}
       \qbezier(-4,-3)(-3.5,-4)(-3,-5)
 \put(-4,-2.5){\line(1,3){4}}
  \put(-4,-2.5){\line(0,3){11}}

 \put(-13,9.2){\circle*{1.3}}
 \qbezier(-13,9.2)(-13,16)(-11,20)
\put(-17,14.2){\tiny{$\frac{-2}{5}$}}
\put(-11,20){\circle*{1.3}}
 \put(-15.5,8){\tiny{0}}

  \put(-9,-3){\circle*{1.3}}
    \put(-8,-4){\tiny{0}}
 \put(-9,-2.5){\line(-1,3){4}}

  \put(-17,-3){\circle*{1.3}}
    \put(-20,-4){\tiny{0}}
 \put(-17,-2.5){\line(1,3){4}}

      \put(24,12){$\bipg_\locg=$}

   \put(51,20){\circle*{2}}
      \put(49,21){\tiny{v}}
 \put(52.5,20.5){\tiny{13=6+7}}
\put(50,9.2){\circle{2}}
 \put(52,8){\tiny{19=2+3+1+7+5+1}}
        \qbezier(50,8)(49.5,7.2)(49,6.4)
        \qbezier(50,8)(50.5,7.2)(51,6.4)

 \qbezier(51,19)(55.5,15)(51,10)
  \qbezier(51,19.5)(47.5,15)(49.5,10.5)
\put(54,13.5){\tiny{2}}
  \put(52.2,10.2){\tiny{w}}
 \put(49.2,13.5){\tiny{7}}
  \put(49.3,10.5){\tiny{u}}
 \put(44,13.5){\tiny{1}}
 \put(46.3,10){\tiny{u}}
   \put(44,20){\circle*{2}}
    \put(44,21){\tiny{v'}}
   \put(40,20.5){\tiny{9}}
    \put(44,19.5){\line(1,-2){4.8}}

\end{picture}
\end{center}
\vspace{.5cm}
 where the integer next to a
vertex (an edge) is its genus (its $a_e$), and each vertex of $\Theta$ is indexed, including the indices $u,w\in \{1,\cdots, \n\}$.
\end{example}

\begin{example} \label{examplegraph}
We list  decorated bipartite graphs of some specified $(g,n,d_\infty)$, where
in the first three Figures each vertex has zero $d_\infty$ and is decorated by genus, and  every edge is decorated by some ``hour" $\alp\in\{1,\cdots,\n\}$ which we omit in the graphs. \black  

\noindent$\blacktriangleright$ {Figure: The list of all $(g,n,d_\infty)=(1,1,0)$ bipartite graphs.}
\medskip
 \begin{center}
 \begin{picture}(20,24)
\put(-26,13.3){\circle{2}}
 \put(-26,12.3){ \tiny{1}}
   \qbezier(-26,12)(-25.5,11.5)(-24.5,11)

\put(-14, 16){and}

 \put(7,24){ \tiny{1}}
 \put(7,23){\circle*{2}}
\put(7.5,17){ \tiny{1}}
  \put(7,22){\line(0,-1){8}}
\put(7,13.1){\circle{2}}
\put(8.5,12.5){\tiny{0}}
  \qbezier(8,12)(8.5,11.5)(9.5,11)
\end{picture}
\end{center}
\vspace{-1.2cm}

\noindent$\blacktriangleright$ {Figure: The list of all  $(g,n,d_\infty)=(2,0,0)$ bipartite graphs. }
\medskip
\begin{center}
 \begin{picture}(50,25)

\put(-36,13.3){\circle{2}}
 \put(-36,13.3){ \tiny{2}}

 \put(-8,24){  \tiny{1}}
 \put(-8,23){\circle*{2}}
\put(-7.5,17.5){\tiny{1}  }
  \put(-8,22){\line(0,-1){8}}
\put(-8,13.2){\circle{2}}
\put(-6.5,12.2){\tiny{1}  }

 \put(22,23){\circle*{2}}
 \put(18.5,23.5){\tiny{1} }
\put(20.5,17){\tiny{1} }
  \put(22,22){\line(2,-5){3.1}}

\put(30,23){\circle*{2}}
\put(31.5,23.5){\tiny{1} }
\put(29.5,17){\tiny{1} }
 \put(30,22){\line(-2,-5){3.1}}
\put(26,13.2){\circle{2}}
 \put(28,12){\tiny{0} }

 \put(58,23){\circle*{2}}
 \put(59.5,23.5){\tiny{1}}
\put(58,13.2){\circle{2}}
 \put(60,12){\tiny{0}}
 \qbezier(59,22)(61,18)(59,14)
  \qbezier(57,22)(55,18)(57,14)
\put(61,17.5){\tiny{1}}
 \put(53,17.3){\tiny{1}}

 \put(81,23.3){\circle*{2}}
 \put(81,23.3){ \tiny{2}}
  \end{picture}
\end{center}
\vspace{-1.cm}
\noindent$\blacktriangleright$ {Figure: Some bipartite graphs of $(g,n,d_\infty)=(10,0,0)$, where the numbers  in the top row denotes the $d_\infty$ of each vertex right below.}

\begin{center}
 \begin{picture}(50,0)

 \put(-30,-14.8){\circle{2}}
 \put(-35,-18){\tiny{g=0}}
 \put(-26.5,-12.5){\tiny{2}}
  \put(-32,-8){\tiny{2}}
  \qbezier(-26,-5)(-23.5,-11)(-29,-14)
  \qbezier(-26,-5)(-32,-8)(-30,-13.5)
   \put(-22,-15){\circle{2}}
\put(-22.5,-18){\tiny{g=0}}
\put(-23,-10.5){\tiny{1}}
 \put(-22,-13.8){\line(-2,5){3.1}}
\put(-26,-5){\circle*{2}}
 \put(-29.5,-3.5){\tiny{g=0}}
  \put(-27.5,0.5){\tiny{3}}
\put(-17,-5){\circle*{2}}
 \put(-18,-5.8){\line(-2,-5){3.4}}
\put(-19,-3.5){\tiny{g=9}}
\put(-18,0.5){\tiny{-3}}
\put(-18.6,-11){\tiny{2}}

     \put(-3.5,0.5){\tiny{-1}}
     \put(7,0.5){\tiny{1}}

 \put(0,-14.3){\circle{2}}
 \put(-3.5,-17.5){\tiny{g=0}}
  \put(0,-4.5){\circle*{2}}
  \put(0,-5.5){\line(0,-1){8}}
  \put(-2,-10){\tiny{1}}
    \put(-3.5,-3.5){\tiny{g=8}}
\put(2,-11){\tiny{1}}
  \put(0.5,-13){\line(1,1){3.7}}
    \put(4.5,-8.5){\line(1,1){3.7}}
\put(8.5,-14.5){\circle{2}}
\put(8,-17.5){\tiny{g=0}}
\put(5,-11){\tiny{1}}
 \put(8.5,-13.3){\line(-1,1){9}}
\put(9,-4.5){\circle*{2}}
\put(9,-5.5){\line(0,-1){8}}
\put(10,-10){\tiny{1}}
 \put(7,-3.5){\tiny{g=1}}

 \put(26.5,0.5){\tiny{2}}
    \put(31.5,0.5){\tiny{-2}}
\put(39,0.5){\tiny{0}}

 \put(29,-4){\circle*{2}}
 \put(22,-4.5){\tiny{g=0}}
\put(28.5,-11){\tiny{7}}
  \put(29,-5){\line(2,-5){3.5}}
  \put(33,-4){\circle*{2}}
    \put(31,-2.5){\tiny{g=7}}
  \put(33,-5){\line(0,-5){8.7}}
  \put(33.1,-9.5){\tiny{1}}
\put(37,-4){\circle*{2}}
\put(38.5,-4){\tiny{g=1}}
\put(36.5,-11){\tiny{1}}
 \put(37,-5){\line(-2,-5){3.5}}
\put(33,-14.8){\circle{2}}
 \put(31,-18){\tiny{g=2}}

 \put(53,0.5){\tiny{2}}
 \put(61,0.5){\tiny{-2}}
 
   \put(62,-4){\circle*{2}}
 \put(63.5,-4.5){\tiny{g=7}}
\put(61,-14.8){\circle{2}}
 \put(63,-16){\tiny{g=3}}
 \qbezier(62,-5)(58.5,-9)(61,-14)
  \qbezier(62,-4.5)(68.5,-9)(62,-14)
\put(65,-10.5){\tiny{1}}
 \put(60.2,-10.5){\tiny{1}}
 \put(65,-10.5){\tiny{1}}
   \put(55,-4){\circle*{2}}
   \put(48.6,-4){\tiny{g=0}}
   \put(55,-4.5){\line(1,-2){4.8}}
 \put(55,-10.5){\tiny{5}}

\end{picture}
\end{center}
 
\end{example}
 





\vskip2cm
 
\subsection{The contribution from a black vertex}
We now construct the contribution from a black vertex to $\nmsp$ correlators. We first fix the notation we will be using.


Let $\bipg\in\Xi_{g,\ell}^r$; let $v\in V_b(\bipg)$, with $E_v=\{e_1,\cdots e_n\}$.
We let $[v]$ be a bipartie graph with one black vertex $v$, $n$ edges $e_1,\cdots,e_n$, $n$ 
genus 0 white vertices $v_1,\cdots ,v_n$, and $n$ $(1,\rho)$-legs $l_1,\cdots, l_n$, so that
each $v_i$ is incident to $e_i$ and each $l_i$ is incident to $v_i$. 
We then set the degree $d_{\infty [v]}$ to be the same $d_{\infty[v]}$ when viewing $v$ as the vertex in
$\Lambda$. 
We set $\alpha_i:=\alpha_{e_i}$, which is the hour of $e_i$, the hour $e_i$ as an edge in $\bipg$. 
This way, $[v]\in \Xi_{g_v,n}^{d_{\infty [v]}}$.

For the $[v]$ of the shape just described, we say a regular decorated graph $\Theta$
strongly contracts to $[v]$ if {\sl $\Lam(\Theta)\cong [v]$, and for each $i$ the subgraph of $\Theta$ that is collapsed 
to $v_i$ (in $[v]$) has total $d_0$-degree zero.}\footnote{If we denote the subgraph collapsed to $v_i$ by $\Theta_{v_i}$, then it is a one vertex no edge graph.}
We let $B_{[v]}$ be the set of all regular graphs $\Theta$ strongly contracting to $[v]$.
We define
\begin{align} \label{infinity-cont} 
\Cont^\infty_{[v]}:=\sum_{d\geq 0} (-1)^{d+1-g } q^d \sum_{\Theta\in B_{[v]}}  \int_{[F_\Theta]\virt}  
\frac{1}{e(N\virt_{\Theta})}  
\prod_{i=1}^n   \ev_i\sta 1_{\alpha_i}  \in \aA,\end{align}  
to be the sum of contributions to $ \left<1_{\alpha_1},\cdots,1_{\alpha_n}\right>\msp_{g_v,|E_v|,d_{\infty[v]}}$ from all
$\Theta\in B_{[v]}$.

\begin{example}\label{graph0-1} 



Following Example \ref{graph0}, the contributions of the graph $\Theta$ can be partitioned into a product of 
the contributions of three subgraphs. The smaller black dots are unstable vertices; $u$ and $w$ are hours.
 \begin{center}
 \begin{picture}(50,29)
         \put(-37,8.5){$\locg=$}

   \put(-10,20){\circle*{1.3}}
 \put(-10,21){\tiny{6}}

  \put(-17,20){\circle*{1.3}}
  \put(-20,20){\line(1,0){2}}
  \put(-17,21){\tiny{5}}
    \put(-22,21){\tiny{4}}
   \put(-17,19.5){\line(1,-4){2.5}}

\put(-5,9.2){\circle*{1.3}}
 \put(-4.5,7.5){\tiny{1}}

\put(-10,9.2){\circle*{1.3}}
   \put(-10,8.5){\line(1,-4){2.9}}
   \put(-16.5,10){\tiny{u}}
\put(-9,7.5){\tiny{3}}
\put(-14,9.2){\circle*{1.3}}
\put(-13,7.5){\tiny{2}}

\put(-6,20){\circle*{1.3}}
\put(-5.8,21){\tiny{7}}
\put(-6,20){\line(-1,0){4}}
 \qbezier(-6,19.2)(-5.8,15)(-5.5,9.2)
  \qbezier(-10,19.5)(-12.5,15)(-14,9.2)
\put(-6,14){\tiny{$\frac{-2}{5}$}}
 \put(-5,10){\tiny{w}}
 \put(-12,14){\tiny{$\frac{-7}{5}$}}
 \put(-13.3,10){\tiny{u}}
 \put(-20,14){\tiny{$\frac{-1}{5}$}}

  \put(-7,-3){\circle*{1.3}}
 \put(-6,-4){\tiny{5}}
   \qbezier(-7,-3)(-6.5,-4)(-6,-5)
 \put(-7,-2.5){\line(1,6){2}}
    \qbezier(-7,-3)(-2,4)(-5,8)

  \put(-14,-3){\circle*{1.3}}
    \put(-13,-4){\tiny{7}}
       \qbezier(-14,-3)(-13.5,-4)(-13,-5)
 \put(-14,-2.5){\line(1,3){4}}
  \put(-14,-2.5){\line(0,3){11}}

 \put(-23,9.2){\circle*{1.3}}
 \qbezier(-23,9.2)(-23,16)(-21,20)
\put(-27,14.2){\tiny{$\frac{-2}{5}$}}
\put(-21,20){\circle*{1.3}}
 \put(-25.5,8){\tiny{0}}

  \put(-19,-3){\circle*{1.3}}
    \put(-18,-4){\tiny{0}}
 \put(-19,-2.5){\line(-1,3){4}}

  \put(-27,-3){\circle*{1.3}}
    \put(-30,-4){\tiny{0}}
 \put(-27,-2.5){\line(1,3){4}}


  \put(1,8.5){$\Rightarrow$}

 \put(19,20){\circle*{1.3}}
  \put(16,20){\line(1,0){2}}
  \put(19,21){\tiny{5}}
    \put(14,21){\tiny{4}}
   \put(19,19.5){\line(1,-4){2.5}}

   \put(19,10){\tiny{u}}
   \put(22,9.2){\circle*{1}}
 \put(23,7.5){\tiny{0}}
\put(22,9){\line(0,-1){1.9}}

 \put(16,14){\tiny{$\frac{-1}{5}$}}

 \put(13,9.2){\circle*{1.3}}
 \qbezier(13,9.2)(13,16)(15,20)
\put(9,14.2){\tiny{$\frac{-2}{5}$}}
\put(15,20){\circle*{1.3}}
 \put(10.5,8){\tiny{0}}

  \put(17,-3){\circle*{1.3}}
    \put(18,-4){\tiny{0}}
 \put(17,-2.5){\line(-1,3){4}}

  \put(9,-3){\circle*{1.3}}
    \put(6,-4){\tiny{0}}
 \put(9,-2.5){\line(1,3){4}}


 \put(31,8.5){$+$}

\put(45,20){\circle*{1.3}}
\put(45,21){\tiny{6}}

\put(49,20){\circle*{1.3}}
\put(49.2,21){\tiny{7}}
\put(49,20){\line(-1,0){4}}
 \qbezier(49,19.2)(49.2,15)(49.5,9.2)
  \qbezier(45,19.5)(42.5,15)(41,9.2)
\put(49,14){\tiny{$\frac{-2}{5}$}}
 \put(50,10){\tiny{w}}
 \put(43,14){\tiny{$\frac{-7}{5}$}}
 \put(41.7,10){\tiny{u}}
 \put(41,9){\circle*{1}}
 \put(49.5,9){\circle*{1}}
\put(41,9){\line(0,-1){2}}
\put(49.5,9){\line(0,-1){2}}

 \put(50.5,7.5){\tiny{0}}
\put(42,7.5){\tiny{0}}


\put(60,8.5){$+$}

\put(80,9.2){\circle*{1.3}}
 \put(81.5,7.5){\tiny{1}}

\put(75,9.2){\circle*{1.3}}
   \put(75,8.5){\line(1,-4){2.9}}
   \put(72,12){\line(-1,-4){1}}
      \put(70,12){\line(1,-4){1}}
\put(76,7.5){\tiny{3}}
\put(71,9.2){\circle*{1.3}}
\put(72,7.5){\tiny{2}}
 \put(80,10){\line(0,1){2}}

  \put(78,-3){\circle*{1.3}}
 \put(79,-4){\tiny{5}}
   \qbezier(78,-3)(78.5,-4)(79,-5)
 \put(78,-2.5){\line(1,6){2}}
    \qbezier(78,-3)(83,4)(80,8)

  \put(71,-3){\circle*{1.3}}
    \put(72,-4){\tiny{7}}
       \qbezier(71,-3)(71.5,-4)(72,-5)
 \put(71,-2.5){\line(1,3){4}}
  \put(71,-2.5){\line(0,3){11}}



\end{picture}
\end{center}
\vskip10pt

\vspace{5mm} 

\noindent$\blacktriangleright$ {Figure: The R.H.S is the partition of $\Lambda(\Theta)$ into three parts:}
\vskip-15pt

\begin{center}
 \begin{picture}(50,29)

      \put(-38.5,12){$\bipg_\locg=$}

   \put(-12,20){\circle*{2}}
      \put(-14,21.5){\tiny{v}}
 \put(-10,20.5){\tiny{13}}
\put(-13,9.2){\circle{2}}
 \put(-11,8){\tiny{19}}
        \qbezier(-13,8)(-13.5,7.2)(-14,6.4)
        \qbezier(-13,8)(-12.5,7.2)(-12,6.4)

 \qbezier(-12,19)(-7.5,15)(-12,10)
  \qbezier(-12,19.5)(-15.5,15)(-13.5,10.5)
\put(-9,13.5){\tiny{2}}
  \put(-10.4,11.2){\tiny{w}}
 \put(-13.8,13.5){\tiny{7}}
  \put(-13.7,11.5){\tiny{u}}
 \put(-19,13.5){\tiny{1}}
 \put(-16.7,11){\tiny{u}}
   \put(-19,20){\circle*{2}}
    \put(-19,21.5){\tiny{v'}}
   \put(-23,20.5){\tiny{9}}
    \put(-19,19.5){\line(1,-2){4.8}}

    \put(1,12){$\Rightarrow$}


   \put(19.5,1.5){\tiny{$(v')$}}

     \put(22,20){\circle*{2}}
    \put(22,21.5){\tiny{v'}}
   \put(18,20.5){\tiny{9}}
    \put(22,19.5){\line(0,-1){9}}
    \put(19.5,14.2){\tiny{1}}
    \put(22,9.5){\circle{1.5}}
    \put(18,9){\tiny{0}}
    \put(22,8.8){\line(0,-1){1.7}}
    \put(22.4,12.5){\tiny{u}}


  \put(30,12){$+$}

     \put(46,1.5){\tiny{$(v)$}}

   \put(46,20){\circle*{2}}
      \put(45,21.5){\tiny{v}}
 \put(48.5,20.5){\tiny{13}}
\put(44.5,9.2){\circle{1.5}}
 \put(52,9.2){\circle{1.5}}
 \put(44.5,8.6){\line(0,-1){1.9}}
  \put(52,8.6){\line(0,-1){1.7}}

 \qbezier(47,19)(50.5,15)(52,10.2)
  \qbezier(47,19.5)(43.5,15)(44,10)
\put(51,14){\tiny{2}}
  \put(51.5,11.6){\tiny{w}}
 \put(45.2,13.5){\tiny{7}}
  \put(42.3,12.3){\tiny{u}}

  \put(60,12){$+$}

\put(71,9.2){\circle{2}}
 \put(72,8){\tiny{19}}


 \qbezier(70,10)(69.5,10.5)(68,12)
 \qbezier(71,10.1)(71,10.5)(71,13)
 \qbezier(72,10)(72.7,10.5)(73.4,12)

 \qbezier(71,8)(70.5,7.5)(70,7)
 \qbezier(71,8)(71.5,7.5)(72,7)

\end{picture}
\end{center}

 \vspace{1mm}

\end{example}

To proceed, we define $\Theta_\infty$ for any $\Theta\in G\ureg\lggd$. Indeed, the vertices of $\Theta_\infty$ 
is the set $V_\infty(\Theta)$, its edges are edges in $E_{\infty\infty}(\Theta)$, and its legs are elements
in 
$$E_{1\infty}(\Theta)\cup\{l\in L(\Theta)\mid \text{$l$ incident to a $v\in V_\infty(\Theta)$}\},
$$
with incidence relations the induced one. For $l_e\in L(\Theta_\infty)$ associated to $e
\in E_{1\infty}(\Theta)$, we assign its monodromy to be $\zeta_5^{w_e}$ satisfying $w_e+5d_{\infty e}\equiv 0(5)$.
Because $\Theta$ is regular, we can choose $w_e\in [1,4]$.

%
%
%

We now let $\Gamma\in G\ureg_{g,\gamma,(d_0',d_\infty')}$ so that $\Ga_\infty=\Gamma$. 
Let $l_1,\cdots,l_n$ be the legs of $\ga$, of (narrow) monodromy assignments $\gamma=
(\zeta_5^{w_1},\cdots, \zeta_5^{w_n})$,
with $\bw=(w_1,\cdots,w_n)$ and $w_i\in [1,4]$. To emphasize its dependence on $\Ga$, we write
$(d_0',d_\infty')=(d_{0\ga},d_{\infty\ga})$, and write $g_\ga$ the total genus of $\ga$. Then
$\Ga\in G\ureg_{g_\ga,\gamma,(d_{0\ga},d_{\infty\ga})}$.
We form $\cW_{(\ga)}$, which is the image of 
$$\cW_\ga\to\cW_{g_\ga,\gamma,(d_{0\ga},d_{\infty\ga})}.
$$
Over $\cW_{(\ga)}$ we denote the coarse-psi classes by  $\psi_i$, and its ancestors by $\bar\psi_i$,
which are pullback of the $i$-th psi class via the forgetful map $ \cW_{(\ga)} \to \barM_{g_\ga,n}$. 
For $(c_1,\cdots,c_n)\in (\ZZ_{\geq 0})^n$ and $\vz=(z_1,\cdots,z_n)$, 
we define the dual twisted FJRW $n$-point function
\begin{align*} F^{LG,\tw}_{\ga}(\vz;\prod_{i=1}^n \bar\psi_i^{c_i})   
  &:=  q^{d_{0\ga}} \int_{[\cW_{(\ga)}]\virt} \frac{1}{e(N\virt_{\ga})} \prod_{i=1}^m \frac{\bar\psi_i^{c_i}}{ z_i - \psi_i/5}.
\end{align*} 
Let $\alpha_i=\alpha_{v_i}$ be the hour of the vertex $v_i$ to which $l_i$ is incident; 
let $\va=(a_1,\cdots,a_n)\in (\ZZ_+)^n$ be such that $a_i\equiv w_i (5)$.
We define
\begin{align}\label{mtgamu} \mT_{\ga,\va}(\prod_{i=1}^n \bar\psi_i^{c_i}) := (-1)^{d_{0\ga}+1-g_\ga} \cdot F^{LG,\tw}_{\ga}(\frac{-\ft_{\alp_1}}{a_1},\cdots,\frac{-\ft_{\alp_n}}{a_n};\prod_{i=1}^n \bar\psi_i^{c_i}).
  \end{align}
In case all $c_i=0$, we denote \eqref{mtgamu} by $\mT_{\ga,\va}$.



%
%
%
%

\subsection{The specialized $S$-function}
We introduce the specialized $S$-functions:
\begin{align}\label{msa}
\mS_a^\alp:=   S\msp(z) 1^\alpha   \big|_{  z= \frac{5\ft_\alpha}{a}}, \and  \mS^\alp_{a;i}:= ( \mS^\alp_{a},  p^i)^\tw.
\end{align}

We agree $(a)_k:=a \cdot (a-1)\cdot \cdots\cdot  (a-k+1)$.

\begin{lemma} \label{specialS}
We have the identity
\begin{align*}
 & \mS^\alpha_{a;0}  =  \,     1 +\sum_{  d=1  }^{ \lceil a/5\rceil-1}  q ^d  \frac{(a-1)_{5d}}{ \big( (\frac{a}{5}-1 )_d \big)^5  }\frac{ \big( {a}/ {5\ft}\big)^{\n d} }{\prod_{m=1}^d \big((-\frac{a}{5}+m )^\n - (\frac{a}{5})^\n \big)}.
 \end{align*}
\end{lemma}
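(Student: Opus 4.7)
\emph{Proof proposal.} The strategy is to identify $\mS^\alpha_{a;0}$ with a specialization of the $J$-function, so that Givental's mirror theorem (Theorem~\ref{mirror}) reduces the lemma to a direct substitution in the explicit $I$-function \eqref{nmspI}.

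First, I would combine the defining sum \eqref{Sfunction} for $S^M(z)$ at $\tau=0$ with the string equation, obtaining
$$(S^M(z)\mathbf{1}^\alpha,\mathbf{1})^{\tw} = (\mathbf{1}^\alpha,\mathbf{1})^{\tw} + \bigl\langle \tfrac{\mathbf{1}^\alpha}{z-\psi_1}, \mathbf{1}\bigr\rangle^M_{0,2}
= (\mathbf{1}^\alpha,\mathbf{1})^{\tw} + \tfrac{1}{z}\bigl\langle \tfrac{\mathbf{1}^\alpha}{z-\psi_1}\bigr\rangle^M_{0,1}
= \bl\mathbf{1}^\alpha,\, J^M(q,z)/z\br^{\tw},$$
where the final equality uses the definition \eqref{Jfunction}. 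Since $\mathbf{1}^\alpha$ is, up to the scalar $\tfrac{n t_\alpha^3 t^n}{-5}$, the Poincar\'e dual of the fixed point $\pt_\alpha$ under the $\tw$-pairing, the pairing $(\mathbf{1}^\alpha,\cdot)^{\tw}$ collapses to restriction $(\cdot)|_{\pt_\alpha}$ (cf.\ Section~\ref{statespace}). Combined with Theorem~\ref{mirror}, this gives
$$\mS^\alpha_{a;0} = \frac{I^M(q,z)}{z}\bigg|_{p\mapsto -t_\alpha,\ z\mapsto 5t_\alpha/a}.$$

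Second, I would substitute $p=-t_\alpha$, $z=5t_\alpha/a$ directly into \eqref{nmspI} and simplify factor by factor. The top product reduces via $5p+mz|_\cdot = 5t_\alpha(m-a)/a$ to $\prod_{m=1}^{5d}(5p+mz)|_\cdot = (-1)^d(5t_\alpha/a)^{5d}(a-1)_{5d}$; the quintuple product becomes $\prod_{m=1}^d(p+mz)^5|_\cdot = (-1)^d(5t_\alpha/a)^{5d}\bl(a/5-1)_d\br^5$; and each factor of the remaining product, after writing $t_\alpha^n = (-1)^n t^n$ and regrouping the power $5^n$, becomes $(5t/a)^n\bl(-a/5+m)^n - (a/5)^n\br$. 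The prefactors $(-1)^d(5t_\alpha/a)^{5d}$ cancel between the numerator and the first denominator, and collecting the remaining pieces produces exactly the quotient in the lemma.

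Third, the truncation at $d=\lceil a/5\rceil-1$ comes for free: the factor $5p+mz|_\cdot = 5t_\alpha(m-a)/a$ vanishes at $m=a$, so whenever $5d\ge a$ the numerator contains a zero factor while the denominator remains a unit, killing the coefficient of $q^d$. The conceptual input --- reducing to $J^M$ via the string equation and invoking the mirror theorem --- is essentially one line of Givental's formalism. The main work is the bookkeeping of signs, especially the interaction between $t_\alpha^n = (-1)^n t^n$ and the factors $(-1)^{5d}=(-1)^d$ that arise when converting $\prod(m-a)$ and $\prod(m-a/5)$ into Pochhammer symbols; one must check that these cancel coherently so that the denominator $\prod_{m=1}^d\bl(-a/5+m)^n-(a/5)^n\br$ appears with no leftover sign.
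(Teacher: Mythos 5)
Your proposal follows the paper's proof exactly: the paper likewise identifies $\mS^\alpha_{a;0}$ with $\frac{a}{5\ft_\alpha}J\msp(0,\tfrac{5\ft_\alpha}{a})|_{\ffp_\alpha}$ via the relation $J\msp(z)=zS\msp(z)^*\mathbf 1$ (which you rederive from the string equation), invokes the mirror theorem to replace $J\msp$ by $I\msp$, and then performs the same substitution $p\mapsto -\ft_\alpha$, $z\mapsto 5\ft_\alpha/a$ in \eqref{nmspI}. You in fact supply more detail than the paper's "a direct computation then proves this lemma," including the correct explanation of the truncation at $d=\lceil a/5\rceil-1$ and an explicit flag of the $(-1)^{\n}$ sign bookkeeping in the last denominator factor.
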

  
\begin{proof}  By definition of $ \mS^\alpha_{a;0}$ and the relation $J(z) =zS\msp(z)^* \mathbf 1$, we see
$$   \frac{5\ft_\alp}{a} \cdot  \mS^\alp_{a;0}  
 = J\msp(0,\frac{5\ft_\alp}{a})|_{\ffp_\alp}= \Big(   \frac{5\ft_\alp}{a} 1_\alp+ \black \left<   \frac{1^{\alpha}}{5\ft_\alp/a-\psi} \right>\msp_{0,1}  1_\alp \Big).
$$
Together with the Mirror theorem \ref{mirror}, we obtain
\begin{align*}
 & \frac{5\ft_\alp}{a}\cdot    (\mS^\alp_{a} ,\mathbf 1 )^\tw= \,   I\msp(q,z) \big|_{z=\frac{5\ft_\alp}{a}, \, p=-\ft_\alp} .
\end{align*}
A direct computation then proves this lemma.
\end{proof}

\begin{corollary}\label{coro-mS} The following properties hold
\begin{enumerate}
\item For any $\alpha$, we have $\mS_{1;0}^\alpha=1$ and 
$ \left<\frac{\mathbf 1^{\alpha}}{5\ft_{\alp}-\psi}\right>\msp_{0,1}=0$;
\item For any $\alpha$, $a$ and $i$, we have
$\mS^\alpha_{a;i} = \zeta^{i (\alpha-\beta)} \cdot \mS^\beta_{a;i}$;
\item  For any $\alpha$, $a$ and $i$,   $\mS^\alpha_{a;i}  \in  \QQ(\zeta_\n) \black [q/\ft^\n] $
is a polynomial in $q$ and
\beq \label{degofmS}
\deg_q  \mS^\alpha_{a;i}    \leq \begin{cases}
 \lceil a/5\rceil-1 & \text{ if } i< \n,\\
 \lceil a/5\rceil & \text{ if } i\geq \n .
 \end{cases}
\eeq
\end{enumerate}
\end{corollary}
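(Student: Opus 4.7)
For Part (1), apply Lemma~\ref{specialS} at $a=1$: the sum over $d$ runs from $1$ to $\lceil 1/5\rceil-1=0$ and is empty, forcing $\mS^\alpha_{1;0}=1$. Combining this with the identity $\tfrac{5\ft_\alpha}{a}\mS^\alpha_{a;0}=J\msp(0,5\ft_\alpha/a)|_{\ffp_\alpha}$ established in the proof of Lemma~\ref{specialS}, together with the expansion $J\msp(0,z)|_{\ffp_\alpha}=z+\langle \mathbf 1^\alpha/(z-\psi)\rangle\msp_{0,1}$ that one reads off from \eqref{Jfunction} at $\mathbf t=0$ using $(\mathbf 1^\alpha,\mathbf 1_\alpha)^\tw=1$ and the vanishing of $H^j$ at each $\ffp_\alpha$, the one-point function $\langle \mathbf 1^\alpha/(5\ft_\alpha-\psi)\rangle\msp_{0,1}$ is forced to vanish.

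\textbf{Part (2)} will follow from a weight-homogeneity argument combined with the $\ZZ/\n$-symmetry of the $\nmsp$ theory. Assigning weight $1$ to each of $t$, $\ft_\alpha$, $p$, $z$ and weight $\n$ to $q$, the virtual-dimension count in the genus-zero twisted GW theory of $\PP^{\n+4}$ forces $S\msp(z)$ to preserve weight on $\sH$. Since $\mathbf 1^\alpha$ has weight $\n+3$ and the pairing $(\cdot,\cdot)^\tw$ lowers total weight by $\n+3$ (visible from the two-antidiagonal Gram matrix $\eta_{ij}$), $\mS^\alpha_{a;i}=(\mS^\alpha_a,p^i)^\tw$ is homogeneous of weight $i$ in $t$. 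On the other hand, the substitution $\sigma\colon t\mapsto \zeta_\n t$ cycles $\ft_\alpha\mapsto \ft_{\alpha+1}$ and $\ffp_\alpha\mapsto\ffp_{\alpha+1}$ while fixing $p$, $q$, and $\ft^\n$, so $\sigma(\mS^\alpha_{a;i})=\mS^{\alpha+1}_{a;i}$. Comparing this to the fact that $\sigma$ multiplies a weight-$i$ scalar by $\zeta_\n^i$ yields $\mS^{\alpha+1}_{a;i}=\zeta_\n^i\,\mS^\alpha_{a;i}$, and iteration gives the stated identity.

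\textbf{Part (3)} I plan to prove by extending Lemma~\ref{specialS} from $i=0$ to general $i$. Using the adjoint identity $(S\msp(z)\mathbf 1^\alpha,p^i)^\tw=(\mathbf 1^\alpha,S\msp(z)^\ast p^i)^\tw$ together with the mirror theorem $zS\msp(z)^\ast\mathbf 1=J\msp=I\msp$ and Givental's description of the tangent space to the Lagrangian cone, one can express $zS\msp(z)^\ast p^i$ as $D_p^i I\msp$ modulo corrections arising from the cohomological relation $p^{\n+5}=\ft^\n p^5$ in $H^\ast_G(\PP^{\n+4})$, where $D_p:=p+zD_q$ and $D_q:=q\partial_q$. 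Restricting $p=-\ft_\alpha$ and then specializing $z=5\ft_\alpha/a$ turns $D_p$ into $(\ft_\alpha/a)(5D_q-a)$, which preserves the $q$-degree; thus the principal term inherits the degree bound $\lceil a/5\rceil-1$ from $\mS^\alpha_{a;0}$ (Lemma~\ref{specialS}), giving the claim for $i<\n$. For $i\in[\n,\n+3]$ the second nonzero antidiagonal $5\ft^\n$ of $\eta_{ij}$ at $i+j=2\n+3$ becomes active, contributing an extra $\ft^\n$ times a degree-$(\lceil a/5\rceil-1)$ polynomial and pushing the $q$-degree up to $\lceil a/5\rceil$. Coefficients stay in $\QQ(\zeta_\n)$ because this ring is preserved by $D_q$ and already contains those of $\mS^\alpha_{a;0}$ by Lemma~\ref{specialS}. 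The main technical obstacle is making the cohomological correction to $zS\msp(z)^\ast p^i$ explicit enough to verify that it raises the $q$-degree by exactly one for $i\in[\n,\n+3]$; a workable route is to work directly in the fixed-locus basis $\{H^j,\mathbf 1_\alpha\}$ of $\sH\uev$, expand $\mS^\alpha_a$ there, and track how the threshold $i=\n$ activates the $5\ft^\n$ entries of $\eta_{ij}$.
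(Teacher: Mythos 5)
Your part (1) is exactly the paper's argument, and your part (2) is a legitimate alternative to it: the paper deduces the $\zeta_\n^{i(\alpha-\beta)}$-covariance by induction on $i$ from the recursion \eqref{mSrec}, whose $i$-th step pulls out one factor of $\ft_\alpha=-\zeta_\n^\alpha\ft$ (and whose $q$-correction term is invariant because $\zeta_\n^{\n}=1$), whereas you get it from homogeneity of weight $i$ in $\ft$ together with the substitution $t\mapsto\zeta_\n t$ permuting the fixed points. Your route is fine provided you actually justify that $S\msp(z)$ preserves the grading (a virtual-dimension count, the same one showing the correlators are polynomials in $q'=-q/\ft^\n$); it is self-contained but buys nothing once the recursion is available.

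Part (3) has a real gap. The paper's proof rests on the explicit quantum differential equation of Lemma \ref{QDEforMSPS}: writing $\mS^\alpha_{a;i}=(\mathbf 1^\alpha,S\msp(z)^*p^i)^\tw\big|_{z=5\ft_\alpha/a}$ and using $D_pS\msp(z)^*=S\msp(z)^*A\msp$ yields \eqref{mSrec}, in which the operator $-\ft_\alpha+\frac{5\ft_\alpha}{a}q\frac{d}{dq}$ preserves $q$-degree and the jump by one at $i\ge\n$ comes entirely from the terms $-c_i\,q\cdot\mS^\alpha_{a;i-\n}$, i.e.\ from the entries $120q,770q,1345q,770q$ of the connection matrix $A\msp$ in \eqref{Anmsp} --- the $q$-corrections to the quantum product by $p$. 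Your sketch never establishes this matrix (you yourself flag ``making the cohomological correction explicit'' as the unresolved obstacle), and the mechanism you propose for the degree jump is not correct: the second antidiagonal $5\ft^\n$ of $\eta_{ij}$ contributes only a factor of $\ft^\n$, which raises neither the degree in $q$ nor the degree in $q/\ft^\n$; likewise the classical relation $p^5(p^\n-\ft^\n)=0$ is $q$-free. What actually raises the degree is the $q\prod_{k=1}^5(5D_p+kz)$ term of the Picard--Fuchs operator \eqref{PFofMSP} feeding into the quantum multiplication. To complete your argument you must compute $A\msp$ (equivalently, carry out the Birkhoff factorization of $I\msp$) and locate its $q$-linear entries in columns $\n-1,\dots,\n+2$, which is precisely the content of Lemma \ref{QDEforMSPS}.
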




\begin{proof} Item (1) is a direct consequence of Lemma \ref{specialS}. We now prove (2) and (3).
By the explicit QDE for the $S$-matrix that we will compute later (see Lemma \ref{QDEforMSPS}), we have
\beq \label{mSrec}
\mS^\alpha_{a;i} =\begin{cases}
 \Big(-\ft_\alpha+\frac{5\ft_\alpha}{a} q \frac{d}{dq} \Big) \mS^\alpha_{a;i-1} & \text{ if  }  i<\n, \\
\Big(-\ft_\alpha+\frac{5\ft_\alpha}{a} q \frac{d}{dq} \Big) \mS^\alpha_{a;i-1} - c_{i} \, q  \cdot\mS^\alpha_{a;i-\n}  & \text{ if  }  i\geq \n,
\end{cases}\quad
\eeq
where $(c_{j})_{j=\n,\cdots,\n+3} =  (120,770,1345,770)$. We see that (2) and (3) follow from inductions via \eqref{mSrec}, 
with initial conditions given by Lemma \ref{specialS}.
\end{proof}

\subsection{Proof of Theorem \ref{summation} } 
We first  look at the case  $k_1=\cdots=k_n=0$.  By the localization formula and applying \cite[Sect.\,6]{NMSP1},
we see that the decomposition of $\Theta$ into its $[0,1]$ part and $(1,\infty]$ part (c.f. Example \ref{graph0-1})  
is consistent with their localization contributions. 
  
Let $\Theta\in B_{[v]}$; it has the shape given by Figure 1. 
Here the infinity line represents  
the part of $\Theta$ lies at $\infty$. 
The edges $E_{1\infty}(\Theta)$ are divided into two
types: Type-I are edges $e_1,\cdots,e_m$ so that the vertex $v_i\in V_1(\Theta)$ incident to $e_i$ is unstable and has 
one leg $l_i$ incident to it; Type-I\!I are edges $e'_{1},\cdots,e'_{\ell}$ so that the maximal connected
subgraph $\Ga_{j}$ of $\Theta_{[0,1]}$ attached to $e'_{j}$ has total genus zero and no legs.

\vspace{-5mm}
 \begin{figure}[H]{\small
 \begin{center}
\begin{picture}(20,30)
\put(-26,23){\circle*{1.3}}
\put(-30,25){$\psi_1$}
 \put(-26,22){\line(0,-1){14.5}}
\put(-28,1.5){$\ti\psi_1$}
\put(-26,7){\circle*{1.5}}
\put(-30,7){$l_1$}
\put(-15,7){$l_m$}
\put(-25.5,12){$e_1$}

 \put(-19,15){$\cdots$}

\put(-10,23){\circle*{1.3}}
\put(-9.5,12){$e_m$}
\put(-14,25){$\psi_m$}
\put(-10,7){\circle*{1.5}}
\put(-12,1.5){$\ti\psi_m$}
 \put(-10,22){\line(0,-1){14.5}}

 \put(15,23){\circle*{1.3}}
\put(23,15){$\cdots$}
\put(13.5,3.7){$\ga_1$}
\put(11,25){$\psi_{m+1}$}
 \put(15,22){\line(0,-1){14}}
\put(15.5,12){$e'_{1}$}

\put(15,5){\circle{6}}

\put(34,23){\circle*{1.3}}
\put(32.5,3.6){$\ga_\ell$}
\put(34.5,12){$e'_{\ell}$}
\put(32,25){$\psi_{m+\ell}$}
 \put(34,22){\line(0,-1){14}}
\put(34,5){\circle{6}}

\black

 \put(-40,23){\line(1,0){85}}
\put(47,23){$ \infty$ }





\end{picture}
\end{center}

\vspace{-5mm}

\caption[1]{  }
}
\end{figure}

For $s=1,\cdots,m$ and $i=1,\cdots,\ell$, we  denote
  $$d_{\infty e_s}=\frac{a_s}{5}, \ \  \alp_{e_s}=\alp_s  \and d_{\infty e'_{i}}=\frac{b_i}{5}, \ \  \alp_{e'_{i}}=\beta_i.$$ 
 Then $\va=(a_1,\cdots,a_m)$,  $\vb=(b_1,\cdots,b_\ell)$  are sequences of integers. We denote \\
 $(a_1,\cdots,a_m,b_1,\cdots,b_\ell)$ by $(\va,\vb)$.



   We first make the following simplification. In case $b_j=1$, by Lemma \ref{specialS},
then the summation of the contributes of all possible $[0,1]$ tails $\ga_j$ is
$$\left<\frac{1^{\alp_j}}{5\ft_{\alp}-\psi}\right>\msp_{0,1}= 5\ft_{\alp_j} \big((\mS^{\alp_j}_{a=1},1^{\alpha})^\tw  -  1\big)1_{\alp}=0.
$$  
Thus from now on we assume $b_i\ge 2$ for all $i=1, \cdots \ell$.
 
We claim that 
\begin{align}\label{d-inf-black} 
d_{\infty[v]} = d_{0\Theta_\infty} + \frac{1}{5}(\sum_{r=1}^m a_r+\sum_{i=1}^\ell b_i)- \frac{1}{5}(2g_\ga-2+ m+\ell).
\end{align}
Indeed, because for any $(\sC,\Sigma^\sC,\cdots)\in\cW_{\Theta}$, we have
$\sL^{\otimes 5}\cong \omega_{\sC}^{\log}$ when restricted to $\sC_a$ for $a\in V(\Theta_\infty)\cup E(\Theta_\infty)$, thus
$d_{\Theta_\infty}=\frac{1}{5}(2g_\ga-2+ m+\ell)$. As $d_{\Theta_\infty}=d_{0\Theta_\infty}-d_{\infty\Theta_\infty}$,
\eqref{[v]} implies \eqref{d-inf-black}. 
As a consequence,
\begin{align}\label{estimate0}
0\leq \deg_q    \mT_{\ga,(\va,\vb)}=d_{0,\ga} = d_{\infty[v]}  + \frac{2g_\ga-2}{5}  -\frac{\sum_{r=1}^m  (a_r-1)}{5}. 
\end{align}

Let $\Cont([v],\Ga, \vb)$ be the sum of contributions to \eqref{infinity-cont} from all such $\Theta$'s with  prescribed $\Ga=\Theta_\infty$ and $\vb$ in Figure 1. Then we have 
\beq\label{Cont-sum}  \Cont^\infty_{[v]}=   \sum_{ \Ga, \vb } \Cont([v],\Ga, \vb) ,
\eeq
where the sum is over all $\Ga$, with the prescibed $g_\ga$, $\valp$, $\va$, $ d_{\infty[v]}$, and with varying $\vb$.
It is a finite sum since $d_{0\ga}$ and all $b_i-1$ are positive, and thus bounded using \eqref{d-inf-black}. 
   
\smallskip
We now calculate $\Cont([v],\Ga, \vb)$ according to the following subcases.

\smallskip
\noindent $\blacktriangleright$ {\sl Case 1: $(g_\ga,m+\ell,d_{0,\ga})\neq (0,2,0),(0,1,0)$}.

Recall the edge/flags contributions in \cite[ Sect.\,7\black]{NMSP1} and \cite[Lemm 4.5]{CLLL} give
$\ti A_{e_s}$ and $\ti A_{e''_s}$ as below. For any edge $e\in E^\alp_{\infty1}$, let $ u=d_e\in \frac{-1}{5}\NN$, and set {\small
$$\ti A_e:= \frac{\prod_{j=1}^{  \lceil-u\rceil-1}(-\ft_\alp-\frac{j\ft_\alp}{u} )^5  }{\prod_{j=1}^{-5u}(-\frac{j\ft_\alp}{u})
\prod_{j=1}^{\lfloor -u \rfloor}(\frac{j\ft_\alp}{u})  } ,
$$
$$c_{\va,\vb}(\valp,\vbeta):=\frac{1}{(\prod a_s)(\prod b_i)} \cdot (\prod_{s=1}^m \ti A_{e_s})(\prod_{i=1}^\ell \ti A_{e''_i}).
$$ } 

We denote   $|\vb|:=b_1+\cdots+b_\ell$. 
Applying the localization formula we have
\beq \label{blackpoly} {\small
     \begin{aligned}   
     & \Cont([v],\Ga, \vb)  
      =   (-1)^{d_{0,\ga}+1-g_\ga}   \frac{ c_{\va,\vb}(\valp,\vbeta)}{|\Aut \vb\,| }  \prod_{s=1}^m (5\ft_{\alp_s}) \black    \bigg[ \prod_{i=1}^\ell\Big( (5\ft_{\beta_i}) J^{msp}(0,\frac{5\ft_{\beta_i}}{b_i})|_{\ffp_{\beta_i}}  \Big) \bigg]   \cdot \\
     &  F^{LG,\tw}_{\ga}\big( \frac{-\ft_{\alp_1}}{a_1},\cdots,\frac{-\ft_{\alp_m}}{a_m},\frac{-\ft_{\beta_1}}{b_1},\cdots,\frac{-\ft_{\beta_\ell}}{b_\ell}\big) 
     =      \frac{c_{\va,\vb} (\valp,\vbeta)}{|\Aut \vb\,| }    \prod_{s=1}^m (5\ft_{\alp_s}) \black    \Big[ \prod_{i=1}^\ell (5\ft_{\beta_i}) (\frac{5\ft_{\beta_i}}{b_i}) \mS^{\beta_i}_{b_i;0}     \Big]       \mT_{\ga,\va,\vb}.
\end{aligned}}
\eeq
By Corollary \ref{coro-mS},  each $ \mS^{\beta_i}_{b_i;0}  $ is a  polynomial in ${q}/{\ft}$,  independent of $\beta_i$.

\smallskip
\noindent $\blacktriangleright$ {\sl Case 2: $(g_\ga,m+\ell,d_{0,\ga})= (0,2,0)$ or $(0,1,0)$}.

Notice the case $(m,\ell)=(0,2)$ is negligible. Using localization formula of \cite{CLLL,NMSP1} 
we obtain their contributions as follows. 

\smallskip
\begin{itemize}
\item \noindent { $(m,\ell)=(2,0)$}:
    The only nontrivial case has  $5\nmid a_1$,  $a_1+a_2\equiv 0(mod\, 5)$. Its contribution is calculated to be
    \begin{align*}   \frac{\ti A_{e_1}\ti A_{e_2}}{a_1a_2} \frac{1}{-\frac{\ft_\alp}{a_{1}}-\frac{\ft_\alp}{a_{2}}} (5\ft_\alp)^2 \cdot \frac{-5\ft_\alp^6}{\prod_{j\neq \alp}(\ft_j-\ft_\alp)} \black .\end{align*}
\item\noindent { $(m,\ell)=(1,1)$}:
The only nontrivial case is when $5\nmid a_1$ and $a_1+b_1\equiv 0(5)$.  
Its contribution is calculated to be
\begin{align*}
    &  (5\ft_\alp)   \frac{\ti A_{e_1}}{a_1}   \frac{1}{- \frac{\ft_\alp}{a_1}-\frac{\ft_\alp}{b_1}}  \frac{\ti A_{e''_1}}{b_1}  \cdot   \frac{-5\ft_\alp^6}{\prod_{j\neq \alp}(\ft_j-\ft_\alp)}    \black \cdot \bigg[    \frac{(5\ft_\alp)^2}{b_1}\cdot  \mS^\alp_{b_1;0}      \bigg].
\end{align*}

\item \noindent { $(m,\ell)=(1,0)$}:
  The only notrivial case is $n:=\frac{a_1}{5}\in \NN$. The contribution    is  calculated to be
   \begin{align*} 
    (5\ft_\alp)       \frac{5\ft_\alp}{ 1-a_1}   \frac{\ti A'_{e_1}}{a_1-1}  \frac{1}{\prod_{   j\neq \beta }(\ft_j-\ft_\alp)},  \black  \text{with} \ 
   \ti  A'_{e_1}=\frac{ \prod_{j=1}^{n-1}(-\ft_\alp-\frac{5j\ft_\alp}{-5n+1})^5 } { \prod_{j=1}^{5n-1}\frac{-5j\ft_\alp}{-5n+1} \prod_{j=1}^n \frac{5j\ft_\alp}{-5n+1}  }.
   \end{align*} 

\end{itemize}

\smallskip
\smallskip
 

\begin{lemma} The contribution
$$\Cont([v],\Ga, \vb)  \in A[\![q]\!]
$$
is a polynomial in $q$ whose degree is bounded by $d_{\infty [v]}+ \frac{1}{5}\bl 2g_v-2  - \sum_{i=1}^\ell  (a_i-1)\br$.
\end{lemma}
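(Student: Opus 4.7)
The plan is to combine the explicit formula \eqref{blackpoly} from Case 1 (and the analogous formulas from Case 2) with the polynomiality and degree bounds already established in Corollary \ref{coro-mS} and in the degree relation \eqref{estimate0} (equivalently \eqref{d-inf-black}). The core observation is that \eqref{blackpoly} factors $\Cont([v],\Gamma,\vb)$ into a $q$-independent edge/flag prefactor times $\prod_{i=1}^\ell \mS^{\beta_i}_{b_i;0}$ times $\mathcal T_{\Gamma,(\va,\vb)}$, and each of the latter two pieces is a polynomial in $q$ of controlled degree.

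First I would read off the $q$-degree of each piece. By Corollary \ref{coro-mS}(3), each specialized $S$-value $\mS^{\beta_i}_{b_i;0}$ is a polynomial in $q$ of degree at most $\lceil b_i/5\rceil-1$. By its very definition \eqref{mtgamu}, $\mathcal T_{\Gamma,(\va,\vb)}$ contributes $q^{d_{0\Gamma}}$, i.e.\ degree exactly $d_{0\Gamma}$. Adding these gives
\[
\deg_q \Cont([v],\Gamma,\vb) \;\le\; d_{0\Gamma} + \sum_{i=1}^\ell \bigl(\lceil b_i/5\rceil - 1\bigr).
\]

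Next I would use the elementary inequality $\lceil b_i/5\rceil - 1 \le (b_i-1)/5$, valid for all integers $b_i\ge 1$ (so in particular under our reduction to $b_i\ge 2$), together with the identity
\[
d_{0\Gamma} \;=\; d_{\infty[v]} + \tfrac{2g_v-2}{5} - \tfrac{1}{5}\sum_{r=1}^m (a_r-1) - \tfrac{1}{5}\sum_{i=1}^\ell (b_i-1),
\]
which follows directly from \eqref{d-inf-black}. Substituting, the $\sum(b_i-1)/5$ contributions cancel, producing the claimed bound
\[
\deg_q \Cont([v],\Gamma,\vb) \;\le\; d_{\infty[v]} + \tfrac{1}{5}\Bigl(2g_v-2 - \sum_{r=1}^m(a_r-1)\Bigr).
\]

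Finally I would dispose of the unstable Case 2 configurations by direct inspection of the explicit formulas written out above: for $(m,\ell)=(2,0)$ and $(1,0)$ the contribution has no $q$-dependence, and the asserted bound follows from a short numerical check using the constraints ($5\nmid a_1$, $a_1+a_2\equiv 0\pmod 5$, etc.); for $(m,\ell)=(1,1)$ only a single $\mS^\alpha_{b_1;0}$ factor appears, and the bound $\lceil b_1/5\rceil - 1 \le (b_1-1)/5$ combined with $a_1+b_1\equiv 0\pmod 5$ again matches the claim. The main (and essentially the only nontrivial) step is the ceiling inequality that lets the $b_i$'s drop out of the final exponent; everything else is bookkeeping via \eqref{d-inf-black} and Corollary \ref{coro-mS}.
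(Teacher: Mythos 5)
Your proof is correct and follows essentially the same route as the paper's: read off $\deg_q \mT_{\ga,\va,\vb}=d_{0,\ga}$ from \eqref{blackpoly}, bound each $\deg_q \mS^{\beta_i}_{b_i;0}$ by $\lceil b_i/5\rceil-1\le (b_i-1)/5$ via Corollary \ref{coro-mS}, and substitute \eqref{d-inf-black} so that the $b_i$-terms cancel. The only (harmless) difference is that you spell out the unstable Case 2 configurations separately, which the paper leaves implicit.
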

\begin{proof}
Following \eqref{blackpoly}, we have $\deg_q    \mT_{\ga,\va,\vb}=d_{0,\ga}$ and 
\begin{align*}
\sum_{i=1}^\ell \deg \mS^{\beta_i}_{b_i;0} &\,=\sum_{i=1}^\ell ( {\lceil b_i/5\rceil-1} ) \leq \frac{1}{5} \sum_{i=1}^\ell  (b_i-1)
\\ &\, = d_{\infty [v]} -d_{0,\ga}+ \frac{2g_v-2 }{5} - \frac{\sum_{i=1}^m  (a_i-1) }{5}.
\end{align*}
This proves the lemma.
\end{proof}

Then Theorem \ref{summation}(2), in case $k_1=\cdots=k_n=0$, follows from \eqref{Cont-sum}.
For case with nonzero $k_i$, 
one needs to integrate each ancestor $\bar\psi_i^{k_i}$ over the virtual cycle at the vertex $v^\circ_l=cl(l)$, regarded as the ancestor of the leg/flag $c(i)$ on $v^\circ_l$.  This amounts to add 
$ \prod_{i\in\LL_v}\bar\psi_{c(i)}^{k_i} $ to  each biparted vertex's contribution.   
 
As the term $\mT_{\ga,\va,\vb}$ in \eqref{blackpoly} is substituted 
by  $\mT_{\ga,\va,\vb}(\prod_{\substack{i=1,\cdots,n\\ cl(i)=v}}\bar\psi_{c(i)}^{k_i})$, whose $q$-degree being still 
$d_{0,\ga}$(c.f.\eqref{mtgamu}) implies the same degree bound in above lemma. This completes the proof of Theorem \ref{summation}.


\vspace{1cm}

\section{Proof of Theorem \ref{Omega01}}
In this section, we will prove Theorem \ref{R-action-thm}.
We begin with providing a stable graph description of $[0,1]$-class in descendent form.

\subsection{Stable graphs from localization graphs}
Let $\tau_i(z)\in\sH[z]$. Recall that
\begin{align}\label{des-01class}    \bigl[\tt_1 (\psi_1 ),\cdots,\tt_n (\psi_n )\bigr]^{[0,1]}_{g,n} \end{align}
is the sum of contributions of all $\nmsp$ localization $[0,1]$-graphs $\Theta$ of $n$ markings and genus $g$  (see Definition \ref{01theory-def}) 
{ 
$$\sum_{\locg\in G^{[0,1]}_{g,n,(d,0)}}  \Contr_{\locg} \big( \tau_{\mathbf n} (\psi_{\mathbf n})\big).
$$
Here (cf. \eqref{01})
$$\Contr_{\locg} \big(\tau_{\mathbf n}(\psi_{\mathbf n}) \big)
 := \sum_{d\ge 0}  (-1)^{d+1-g } q^d 
\bl \pr_{g,n}\br\lsta \Bigl(\prod_{i=1}^n \ev_i\sta \tt_i(\psi_i)\cdot  \Cont_\Theta \Bigr).
$$
}


Given a connected localization  $[0,1]$-graph $\locg$, we can {\sl stablize} it
(c.f. Appendix \ref{stablization}) to get a stable graph $\ga=\Theta\ust$, together with decoration $g_v$ for $v\in V(\locg)$, and 
its level $\lev_v \in\{0,1\}$ and $F_v\in\{Q\} \cup \{\ffp_\alp\}_{\alp=1}^\n$.  
Here since each $v\in \Theta\ust$ corresponds to a unique stable vertex, denoted by the same $v$, in $\Theta$, 
the level of $v$ is the same level of $v$, and $F_v=\{Q\}$ when $\lev_v=0$, and $F_v=\{\pt\lalp\}$ when
$\lev_v=1$ and $\alpha_v=\alpha$. Then $\ga\in G_{g,n}^\n$ by the definition of $G_{g,n}^\n$ above Theorem \ref{R-action-thm}.

%
%

\subsection{Tail contribution via $\nmsp$ $J$-function } \label{tailcont}
The tail contributions naturally appear at each stable vertex by the localization formula. For example,
among all decorated graphs appearing in the localization formula calculating
genus $g$ no marking $\nmsp$ correlator
$$
F\msp(q):=\left<  \right>\msp _{g,0} ,
$$
Then there is a class of graphs, call ``leading" graphs, each of which has a single genus $g$ vertex $v$ with ``tails" 
attached to it. We will view such graph as a vertex with legs so that each leg is then replaced
by a tail. 
By the argument in Lemma \ref{cone},  
the total contribution of all possible tails attached to a leg $l$ is ($\psi_l$ is the psi classes of the marking $l$)
$$L\loc(\psi_l) |_{F_v}= \psi_l +  [ J\msp(-\psi_l) ]_+  \big| _{F_v}    \in  \sH_{F_v}[\![\psi_l]\!].
$$

\subsection{Chain contributions via two-point functions } \label{chaincont}
  We introduce the following two-point function:
\begin{align*}
 W\msp(z_1,z_2) =  
 \frac{\sum_\alpha e_\alpha \otimes e^{\alpha}}{-z_1-z_2}+\sum_{\alpha,\beta} e_\alpha\otimes e_\beta   \Bigl< \frac{e^{\alpha}}{-z_1-\psi_1},\frac{e^{\beta}}{-z_2-\psi_2}\Bigr>_{0,2}\msp.
 \end{align*}
By Principle ($\bigstar$) it is equal to the two point function of $L_p^{\otimes 5}$ twisted $\PP^{4+\n}$ theory. 
 From the string and WDVV equations, a standard argument shows  %
\begin{lemma} We have
\begin{equation} \label{twopoinnt}
W\msp(z_1,{z_2}) =  -\sum_\alpha \frac{ S\msp(z_1)^{-1}e_\alpha \otimes S\msp({z_2})^{-1}e^{\alpha}}{z_1+{z_2}}.
\end{equation}
  \end{lemma}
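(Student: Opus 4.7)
The identity \eqref{twopoinnt} is Givental's well-known two-point function formula; it holds in any genus-zero CohFT and, via Principle~$(\bigstar)$, for the $G$-equivariant $L_p^{\otimes 5}$-twisted theory on $\PP^{4+\n}$. I plan to give the standard proof via string$+$WDVV, with a more conceptual recast in terms of the Lagrangian cone.

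First I would multiply both sides of \eqref{twopoinnt} by $(z_1+z_2)$, clearing the diagonal pole and reducing to the identity
\begin{equation*}
\textstyle \sum_\alpha e_\alpha \otimes e^\alpha + (z_1+z_2)\sum_{\alpha,\beta} e_\alpha \otimes e_\beta \bigl\langle \tfrac{e^\alpha}{-z_1-\psi_1}, \tfrac{e^\beta}{-z_2-\psi_2}\bigr\rangle_{0,2}^{\!M} = \sum_\alpha S^M(z_1)^{-1} e_\alpha \otimes S^M(z_2)^{-1} e^\alpha.
\end{equation*}
Next, the algebraic rewriting
$\tfrac{z_1+z_2}{(-z_1-\psi_1)(-z_2-\psi_2)} = \tfrac{-1}{-z_1-\psi_1}+\tfrac{-1}{-z_2-\psi_2} - \tfrac{\psi_1+\psi_2}{(-z_1-\psi_1)(-z_2-\psi_2)}$,
together with the string equation applied to a three-point correlator formed by inserting an extra $\mathbf 1$-marking, converts the $(z_1+z_2)$-prefactor into a sum over three-point correlators of the form $\bigl\langle \mathbf 1,\tfrac{e^\alpha}{-z_1-\psi_1},\tfrac{e^\beta}{-z_2-\psi_2}\bigr\rangle$ plus boundary terms. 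Applying WDVV (Frobenius factorization through~$\mathbf 1$) to each such three-point correlator splits it as $\sum_\gamma \bigl\langle \tfrac{e^\alpha}{-z_1-\psi_1}, e^\gamma\bigr\rangle \cdot \bigl\langle e_\gamma, \tfrac{e^\beta}{-z_2-\psi_2}\bigr\rangle$ plus strictly lower-order terms, and iterating this WDVV reduction matches, order by order, the series inverse of the defining expansion \eqref{Sfunction} of $S^M$; the right-hand side then emerges as the bilinear sum claimed.

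A cleaner, coordinate-free route is to observe that \eqref{twopoinnt} is equivalent to the symplectic unitarity
$\bigl(S^M(-z)\bigr)^\ast S^M(z) = \id$ with respect to the twisted pairing $(,)^\tw$, which in turn is a formal consequence of the isotropy of the (twisted) Lagrangian cone $\mathcal L^{\loc}$. That isotropy was already used in Lemma~\ref{cone} to give $J^M(\bt,-z)\in\mathcal L^{\loc}$; combined with Principle~$(\bigstar)$ and the standard Givental dictionary between tangent spaces of $\mathcal L^{\loc}$ and $S_\tau^{-1}\mathcal H_+$, formula~\eqref{twopoinnt} is the statement that $W^M$ is the integral kernel of the symplectic form $\Omega$ restricted to $T_0\mathcal L^{\loc}\times T_0\mathcal L^{\loc}$, pulled back through the change of frame $S^M(z)^{-1}\colon \mathcal H_+\xrightarrow{\sim} T_0\mathcal L^{\loc}$.

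The main obstacle in the first (direct) approach is the combinatorial bookkeeping needed to identify the iterated WDVV expansion of $(z_1+z_2)W^M$ with the formal series inverse $S^M(z_i)^{-1} = \id - (S^M(z_i)-\id) + (S^M(z_i)-\id)^2 - \cdots$ applied to $\sum e_\alpha\otimes e^\alpha$; this is routine but tedious, and the symplectic viewpoint of the third paragraph bypasses it entirely at the cost of invoking Givental's Lagrangian-cone formalism, which is already in force here by \S\ref{birkhoff}.
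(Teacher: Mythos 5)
Your first route is exactly the paper's proof: the paper simply invokes Principle~$(\bigstar)$ to identify $W^M$ with the two-point function of the $L_p^{\otimes 5}$-twisted theory of $\PP^{4+\n}$ and then states that the identity follows from "the string and WDVV equations, a standard argument," which is precisely the string$+$WDVV reduction you outline (in more detail than the paper gives). Your symplectic-unitarity recast is a valid alternative but is not needed; the proposal is correct and matches the paper's approach.
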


The two point $\nmsp$ correlator can be computed by using $\nmsp$ localization as a graph sum, and we can see that each localization graph is a chain that connects two (localization) vertices $v_1$ and $v_2$ (which could be the same one).

At each vertex $v_i$ ($i=1,2$), there are two types of graph contributions:
\begin{enumerate}
\item when the vertex $v_i$ is unstable,  $\psi_i$ is invertible and the graph will contribute to $W\msp(z_1,z_2)$ of 
terms with non-negative power of $z_i$;
\item when the vertex $v_i$ is stable,  $\psi_i$ is not invertible and the graph will contribute to $W\msp(z_1,z_2)$ 
of terms with negative power of $z_i$.
\end{enumerate}

These contributions can be computed by expanding $W\msp(z_1,z_2)$ as a Laurent series of $z_i$ (at $z_i=0$), 
and taking the part of non-negative or positive powers of $z_i$. 

On the other hand, for each $z_i$ we can expand $W\msp(z_1,z_2)$ as a power series of $z_i^{-1}$, namely expanding
at $z_i=\infty$. The coefficient of $e_\alpha z_i^{-k-1}$ corresponds to the correlator with insertion 
$e^\alpha \psi_i^k$ at $v_i$. Recall that the notation   $f(z^-)$  means  we expand $f(z)$ near $z=\infty$.  

\smallskip
The following two situations will be used consistently.  
\begin{itemize}
\item[(a)] {\sl An edge in the stabilization}: Let $e$ be an edge in the graph after  stablization, with vertices $v$ and $v'$ incident to it.
We consider the two side truncation
\begin{equation}\label{edge324} [W\msp({ \psi_{(e,v)},\psi_{(e,v')}})]_{+,+}   \in \sH_{v}\otimes \sH_{v'}
[\![\psi_{(e,v)},\psi_{(e,v')}]\!],
\end{equation}
as a bi-vector insertion at the edge $e$. It gives total contributions from all possible chains 
that contracts to $e$. 
\item[(b)] {\sl A leg }: Let $l$ be a leg connected via a chain $e$ (in the localization graph) to a stable graph vertex $v$. 
{ As in Appendix B we call such chain an ``end".}
We can consider the class with one side truncation
\begin{equation} \label{legcont}
 \Res_{z=0}\Big([W\msp({ \psi_{v}},-z^-)]_{+} ,\tt(z)\Big)^{\tw} \in \sH_{v}[\![\psi_{v}]\!],
\end{equation} 
as an insertion at the vertex $v$. It gives the total contributions from all possible ``ends" between $v$ and $l$. 
Here for $\psi_l$ the descendent at $l$, $\tt(\psi_l)$ is the insertion at the leg $l$; and $\psi_{v}$ denotes the descendent of $v$ at the flag $(e,v)$. 
\end{itemize}

  By using  \eqref{twopoinnt}, the contribution \eqref{legcont} can be computed to be
  \begin{align}\label{A10}
&\Res_{z=0}\Big(\sum_\alpha \frac{ S\msp(\psi_v)^{-1}e_\alpha \otimes S\msp(-z^-)^{-1}e^\alpha}{\psi_v-z} \Big|_{+},\tt(z)\Big)^{\tw}   \\
&\qquad\,=  [S\msp(\psi_v)^{-1} [(S\msp(-\psi_v^{-})^{-1})^* \tt(\psi_v) ]_+]_+ \ 
 =   [S\msp(\psi_v)^{-1} [ S\msp(\psi_v^-) \tt(\psi_v) ]_+]_+. \nonumber
\end{align}
Especially, when the insertion $\tt$ does not contain $\psi_v$-class, it is
$$
[S\msp(\psi_v)^{-1} \tt]_+.
$$

\subsection{Stable graphs contribution in descendents}\label{by-des}
By Section \ref{tailcont} and \ref{chaincont}, the contribution from a stable $[0,1]$-graphs $\ga$ to
\eqref{des-01class}, which we denote by { $\Cont_{\hga}$}, 
is given by the following construction:
\begin{enumerate}
\item   at each vertex $v$ of $\ga$, we place a linear map
$$
\bigotimes_{j=1}^n \tt_j(z_j)  \longmapsto  \sum_{l\geq 0}\frac{1 }{l !} (\pr_l)_* \Big[\prod_{j=1}^n \tt_j(z_j)  , \prod_{i=1}^{l}  L\loc(\psi_{n_v+i})\Big]^{F_v,\tw}_{g_v,n_v+l},
$$
where $\displaystyle L\loc(z):=z (1-S\msp(z)^{-1})\mathbf 1 |_{+} \in \sH[\![z]\!]$;
 \footnote{The infinite summation is convergent in the $q$-adic topology. See remark \ref{convergenceL}.}
\item  at each edge of $\ga$, we place a bi-vector valued series(viewing $z=\psi_{(e,v)},w=\psi_{(e,v')}$) 
$$
\qquad \qquad \sum_\alpha \frac{ S\loc(z) ^{-1} e_\alpha \otimes S\loc(w)^{-1} e^\alpha}{z+w}
  -\frac{  S\msp(z)^{-1} e_\alpha \otimes  S\msp(w)^{-1} e^\alpha}{z+w}  \Big|_{+,+};
$$
{
this is the contribution of chains in 
the localization graph that stabilize to $e$:
the second term comes from \eqref{twopoinnt} and \eqref{edge324},   and the first term comes from the case that $v=v'$ is the chain itself, which is not allowed;}
\item at each leg $l$ incident to $v$ of $\ga$, we place a vector valued series
$$[S\msp(\psi_v)^{-1} [ S\msp(\psi_v^-) \tt_i(\psi_v) ]_+]_+ \in {\sH\otimes \aA[\![\psi_{v}]\!]};
$$
this is from \eqref{A10}, the total contribution of chains connecting $v$ and the leg $l$, with insertions $\tt_i$ at $l$ in the localization graph. (Note that unlike (2), the ``length" of this chain can be $0$.) \end{enumerate}

This way the $\frac{1}{|\Aut_{\hga}|}(\xi_\hga)_* \Cont_\hga$ is the sum of   $\Contr_\Theta(\tau_1(\psi_1),\cdots,\tau_n(\psi_n))$ over all $\Theta$ with $\Theta\ust=\ga$.  We conclude that
$$
 \sum_{d\geq 0} q^d  (\pr_{g,n})_* \Bigl(\prod_{j=1}^n \tt_j  \cap \bl [ {\mathcal W}_{g,n,(d,0)} ]\virt\br^{[0,1]} \Bigr)
= \sum_{\hga}  \frac{1}{|\Aut_{\hga}|}(\xi_{\hga})_*{\Cont_{ \hga}}.
$$
Note here that the automorphisms of the tails are part of the definition of $\Cont_{\locg}$, hence only $|\Aut \hga|$ 
is in the identity.

\begin{remark}
The argument here is essential the idea of Givental \cite{Giv3}, and is close to the treatment in \cite{CGT}.
\end{remark}

\subsection{Stable graph contribution in ancestors}
To prove Theorem \ref{R-action-thm}, we need to convert descendent classes to
an ancestors classes $\bar\psi_i  \in H^*(\M_{g_v,n_v})$, at stable-graph-vertex $v$. 

{  Let $\{\tt_i(\psi)\}_{i=1}^{n_v}$ be the insertions to $v$ from the two types of  chains \eqref{edge324} and \eqref{A10}. Here $n_v$ is the valence of $v$ in the graph after stablization.  By Kontsevich-Manin's descendent-ancestor formula \cite{KM}, 
\begin{align} \label{Tlocclass}
& \, \sum_{s\geq 0} \frac{1}{s!} (\pi_s)_* 
\big[ \bigotimes_{i=1}^{n_v}\tt_i(\psi_i), L\loc(\psi)^{s} \big]^{F_v,\tw}_{g_v,n_v+s} \\ \nonumber
  =&\,\sum_{k,l\geq 0} \frac{1}{k!\,l!} (\pi_{k+l})_* \big[ \bigotimes_{i=1}^{n_v}\tt_i(\psi_i), 
  (L\loc(\psi)-u)^k,u^l\big]^{F_v,\tw}_{g_v,n_v+k+l} , \qquad u=\tau\loc(0) \nonumber
\\ 
 =&\,\sum_{k,l\geq 0} \frac{1}{k!\,l!} (\pi_{k+l})_*  \big[ \bigotimes_{i=1}^{n_v} S_u\loc(\bar \psi_i  ) \tt_i(\bar\psi_i)_+,
T_u\loc(\bar\psi)^k  ,u^l\big]^{F_v,\tw}_{g_v,n_v+k+l}, \nonumber
\end{align} 
}
where  $\psi_i$'s are the psi-class of   $\M_{g_v,n_v+k+l}(F_v,d)$'s; 
$\bar \psi $'s are the ancestor classes pullback from $\M_{g_v,n_v+k}$. Here we have used
\begin{align} \label{Tloc}
T_u\loc(z)  :=  &\,S_u\loc(z) (L\loc(z)-u)_+  \noindent\\= &\, S_u\loc(z)\Big(z (1-S\msp(z)^{-1})\mathbf 1 \big|_{+}  -u\Big) = z(1-R_u\loc(z)^{-1}) \mathbf 1. \nonumber
\end{align}
 Here we choose the coordinate   $u=\tau\loc(0)$ (c.f. Definition \ref{uoflocal} and \eqref{tauloc}), and as in Convention \ref{conventionloc} we omit the subscript in $S\loc$,$R\loc$ and $T\loc$ with this choice of $u$.

Since $T\loc(z)$ has no $z^{n\le 0}$ terms, namely $z | T\loc(z)$, the formula still holds if we replace the first  $n_v$. $\bar \psi_i$'s by the ancestor classes $\bar{\bar{\psi}}_i$'s from $\M_{g_v,n_v}$, and \eqref{Tlocclass} is equal to
$$ \,\sum_{k,l\geq 0} \frac{1}{k!\,l!} (\pi_{k+l})_* 
\Big[ \bigotimes_{i=1}^{n_v} S\loc(\bar {\bar\psi}_i) \tt_i(\bar {\bar\psi}_i)_+,  
T\loc(\bar\psi)^k  ,u^l\Big]^{F_v,\tw}_{g_v,n_v+k+l}.
$$


\begin{remark}\label{convergenceL}
Note that by the localization formula, each term in $L\loc$ has positive $q$-degree. Since $u = \tau\loc(0)$ also has the same property (c.f. Definition \ref{uoflocal}),   \black
$T\loc$ also only has positive $q$-degree. 
Hence for fixed $d$, there are only finite terms that contribute to the coefficient of $q^d$ in the infinite sum. 
This implies that the infinite sum   is well-defined. 
\end{remark}

By using the Birkhoff factorization (cf. \eqref{sr}) 
$$
S\msp(z)
 = R_{u }\loc( z)  S_{ u }\loc( z),
$$  
we see the contribution of a stable graph
${\hga}$
is given by the following construction which can be realized as an $R$-matrix action:
\begin{enumerate}
\item at each vertex $v$, we place a linear map
\begin{align}\label{vertex-assign} 
\otimes_{j=1}^{n_v} \tt_j(z_j)  \longmapsto  \sum_{k,l\geq 0} \frac{1}{k!l!} (\pi_{k+l})_* 
\Big[\bigotimes_{j=1}^{n_v}\tt_j(\bar{\bar \psi}_i) , T\loc(\bar \psi)^k  ,u^l\Big]^{F_v,\tw}_{g_v,n_v+k+l} \end{align}

\item  at each edge $e$ with $E_v=\{v_1,v_2\}$, we place  
a bi-vector valued series 
\begin{align*}
\qquad V(z,w) :=& \sum_\alpha  \frac{ e_\alpha \otimes   e^\alpha-  R\loc(z)^{-1} e_\alpha \otimes  R\loc(w)^{-1} e^\alpha}{z+w}   \in (\sH  \otimes \sH)\otimes \aA  [\![z,w]\!]
\end{align*}
\item at each leg $l$ incident to $v$, we place a  vector valued series 
$$ R\loc (z_v)^{-1} \big[ S\msp (z_v^-) \tt_i(z_v)  \big] _{+} \in \sH\otimes \aA[\![z_v]\!].$$
 \end{enumerate}

Recall that Grothendieck-Riemann-Roch theorem is used by Mumford to express the twisted class $[-]^{\ffp_\alp,\tw}$ in terms of its degree $0$ component  $[-]^{\ffp_\alp,\omega}$ (c.f. \eqref{Top-pt}).   
{ And Mumford-Faber-Pandharipande's formula  (c.f. \cite[Sect.\,2.2]{Giv3}) 
can be applied to conclude} that the above graph sum remains the same if we do the following: \\
(i) at edge and leg,
replace all $R\loc$ {by (cf. Lemma \ref{Define-R})} \footnote{Here $\Delta^{\ffp_\alp}$ are defined in \eqref{deltapt};  they can be viewed as the $R$ matrices of Grothendieck-Riemann-Roch formula reducing $[-]^{\ffp_\alp,\tw}$ to its topological part $[-]^{\ffp,\omega}$.}
\begin{align} \label{rmatrixdelta}
R(z) = \, R\loc (z)  \cdot  \Big(\id_{\sH_Q} \oplus \diag \{ \Delta^{\ffp_\alp}(z) \}_{\alpha=1}^\n \Big);  
\end{align}
(ii) at each vertex replace  $[-]^{F_v,\tw}$ by $[-]^{F_v,\omega}$, (cf. \eqref{vertex-assign})
\begin{itemize}
\item if $F_v=Q$, replace $[-]^{Q,\tw}$ by $[-]^{Q,\omega}$, notice $[-]^{Q,\tw}=[-]^{Q,\omega}$;
\item if $F_v=\ffp_\alp$,  replacing   $[-]^{\ffp_\alp,\tw}$ by   $[-]^{\ffp_\alp, \omega}$, 
 \end{itemize}  
and replace $T\loc$ in \eqref{vertex-assign} by \begin{align}  
T(z) : =  \, z(1-R^{-1}(z))\mathbf 1.
\end{align} 

\begin{remark}  The formula $T(z)$ comes from, for $\Delta(z):=\Big(\id_{\sH_Q} \oplus \diag \{ \Delta^{\ffp_\alp}(z) \}_{\alpha=1}^\n \Big)$,  the tail of composed $R$ matrices is (c.f. \cite{PPZ,NMSP3}) 
\beq\label{comp-tail}\Delta(z)^{-1}T\loc_u(z)+ z[1-\Delta(z)^{-1}1]= z-\Delta(z)^{-1}R\loc(z)^{-1} 1=z(1-R(z)^{-1})\mathbf 1. 
\eeq
 \end{remark}

\begin{remark} \label{RwithGRR}
The equation \eqref{rmatrixdelta} can also be understood as a composition of $R$-matrix actions on CohFTs. By the theorem of \cite{PPZ}, the $R$-matrix action on CohFT is a left group action. Hence by first applying the constant $R$-matrix action $ \Delta^{\ffp_\alp}(z)$ at the fixed loci $\ffp_\alp$, and then applying the $R\loc$ which is from the localization, the composition gives us the $R$-matrix $R(z)$. For more details of this point of view, see Section 1.5.1 in the sequent paper \cite{NMSP3}.
\end{remark}

\begin{proof}[Complete the proof of Theorem \ref{R-action-thm}]
We will finish the proof by applying the Divisor and Dilaton equations. 
Notice that we need to evaluate the vertex contribution
\begin{align}\label{vcon}  \sum_{k,l\geq 0} \frac{1}{k!l!} (\pi_{k+l})_* \big[\bigotimes_{j=1}^{n_v}\tt_j(\bar{\bar \psi}_i) , T(\bar \psi)^k  ,u^l\big]^{F_v,\tw}_{g_v,n_v+k+l}. 
\end{align}


At $F_v=Q$, the Divisor equation implies that for $f(q) \in F[\![q]\!]$, $u = f(q) H$, and $\tt_i \in \sH_Q$,  we have $$
  \sum_{l\geq 0} \frac{1}{l!} (\pi_{l})_*   \big[  \tt_1 ,\cdots,  \tt_{n_v}  ,u^l\big]^{Q,\omega}_{g_v,n_v+l} =\big[  \tt_1 ,\cdots,  \tt_{n_v}  \big]^{Q,\omega}_{g_v,n_v} \Big|_{q\mapsto q\cdot e^{f(q)}}.
$$
For our case, this gives the mirror map $q\mapsto q \cdot e^{I_1/I_0}$.  

At $F_v=\ffp_\alp$, we have
$$(\pi_{l})_*   \big[  \tt_1 ,\cdots,  \tt_{n}  ,u^l\big]^{\ffp_\alp,\omega}_{g_v,n_v+l}=0, \quad \text{when}\ l>0.
$$ 
Let $T_v(z):= T(z)|_{F_v}$ and set
\beq  \label{defofoT}
\oT_v(z):=z+ \delta_{F_v} \cdot ( T_v(z)  -z1_{F_v})
\eeq
with $q':=-q/t^\n$ 
and
$$ \textstyle
 \delta_{F}  :=\left\{
  \begin{aligned}
&    I_0 ^{-1}(q') \black  &  &\text{  if  $F=Q$; }\\
&   L^{\frac{\n+3}{2}}(q')  \black  &  &\text{  if  $F= \ffp_\alpha$}.
  \end{aligned}
    \right.
$$ 
Then $\oT_v(z)=\oT_\alp(z)$ in Definition \ref{locclass} if $F_v=\ffp_\alp$. Also for any $v$  
$$T_v(z)=(1-\delta_{F_v}^{-1})z+ \delta_{F_v}^{-1}\oT_v(z).
$$

Using the class-version of Dilaton equation,\footnote{Let $\pi_1:\barM_{g,n+1}(X,d) \to \barM_{g,n}(X,d)$ be the forgetful
map, let $\eta$ be a homology class of $\barM_{g,n}(X,d)$. Then
$\pi_{1\ast}(\pi_1\sta \eta \cap \psi_{n+1}) = (2g-2+n)  v$.}
\beq\label{Guo-dila} {\small
\begin{aligned} T_v
  &    \big[  \otimes_i^{n_v}\tt_i \big]^{F_v,\omega}_{g_v,n_v} \black  :=\,\sum_{s\geq 0} \frac{1}{s!} (\pi_{s})\lsta   
  \big[  \otimes_i^{n_v}\tt_i  ,T_v(\bar\psi)^s \big]^{F_v,\omega}_{g_v,n_v+s}   \\
=\,&\sum_{\ell,m\geq 0} \frac{1}{\ell!m!} (\pi_{\ell+m})\lsta   \big[  \otimes_i^{n_v} \tt_i ,  [(1-\delta_{F_v}^{-1})\bar\psi\mathbf 1]^\ell ,  [\delta_{F_v}^{-1}\oT_v(\bar\psi)]^m \big]^{F_v,\omega}_{g_v,n_v+\ell+m}    \\
=\,&\sum_{\ell,m\geq 0} \frac{  (1-\delta_{F_v}^{-1})^{\ell} }{\ell!m!} \binom{{2g_v-2+n_v+\ \atop \ \ +m+\ell-1}}{\ell}  (\pi_{m})\lsta   \big[  \otimes_i^{n_v} \tt_i,  [ \delta_{F_v}^{-1} \oT_v(\bar\psi)]^m \big]^{F_v,\omega}_{g_v,n_v+m} \\
=\,&\sum_{ m \geq 0} \frac{\delta_{F_v}^{2g-2+n_v}}{m!}   (\pi_{m})\lsta   \big[   \otimes_i^{n_v}\tt_i, [\oT_v(\bar\psi)]^m \big]^{F_v,\omega}_{g_v,n_v+m}.
\end{aligned}}
\eeq
Together with the explicit formula \footnote{$T(z) |_Q = T\loc(z)$ because $\Delta\sta|_Q$ is identity and $\Delta\sta$ preserves $H\sta(\npt)$ in \eqref{comp-tail}. 
 }
\beq \label{TzQ}
T(z) |_Q = T\loc(z) |_Q =   \big(1- I_0(q') \big) \cdot  \mathbf 1_Q z +O(z^{\n-2}),  
\eeq
$$
T(z) |_{\ffp_\alp}=   \,     \big(  1-L(q')^{\frac{-\n-3}{2}}+O(z)\big) \black \cdot \mathbf 1_\alpha z,
$$
(c.f. Lemma \ref{DWmapQ} and Corollary \ref{DWmappt}) we obtain (using  \eqref{defofoT} for definition of $\oT_v$)
$$
\oT_v(z) = \begin{cases}
 O(z^{\n-2})\qquad &\text{if}\ F_v=Q, \\
   O(z^2) \qquad & \text{if} \ F_v=\ffp_\alp.
 \end{cases}
$$
Applying the discussions to \eqref{vcon}, we see that when
$${\n \gg 3g-3+n\geq3g_v-3+n_v},
$$
\eqref{vcon} matches the vertex contribution in Theorem \ref{R-action-thm}.  This completes the proof of Theorem 
\ref{R-action-thm}.

Note that in the above proof,  the condition $\n \gg 3g-3+n$ is only used for \eqref{TzQ}. This allows us to define the translated classes at $Q$ as in  Definition \ref{locclass}, which has no tails contributions. For general $\n$ the proof still works, with the translated classes defined by the translation action on the local CohFT (see \cite{PPZ, NMSP3} for the translation action, see also Remark \ref{RwithGRR}). The CohFT formula in Theorem \ref{Omega01} holds for general $\n$.
\end{proof}

\vspace{1cm}

\section{Proof of Theorem \ref{thm1}}
 
First, we replace the descendants in 
$$
\Big[ \bigotimes_{i\in L_v}   \tt _i  , \bigotimes_{e\in E_v} \frac{ 1^{\alp_e}}{  \frac{5\ft}{a_e}- \psi_{(e,f)}  }     \Big]^{[0,1]}_{g_v,n_v }
$$
(cf. Theorem \ref{summation}) by their ancestors.  There are two cases to consider: the stable ones and 
the unstable ones.

\subsection{Unstable cases}Let $v\in V_w(\bipg)$ be unstable, which is when $(g_v,n_v)=(0,2)$.
\begin{lemma}
Let $\bS$ be defined by
$\displaystyle \bS f(z) :=    S\msp(z^{-})f(z)   \in \sH[\![z,z^{-1} ]\!]$, where $ f(z)   \in \sH[\![z ]$. Then
$$
\qquad \bigl< \frac{\tt_1}{z-\psi_1},\tt_2(\psi_2) \bigr>^{[0,1]}_{0,2}  =    {\big(S\msp(z)\tt_1, \bS\tt_2 (-z)_+ \big)^{\tw}}.
$$
\end{lemma}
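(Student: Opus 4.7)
The plan is to derive the identity from the closed form of the $\nmsp$ two-point function (Lemma \ref{twopoinnt}) together with a formal residue extraction.

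First, I combine the two equivalent expressions for $W\msp(-z,z_2)$: the defining expansion
\[
W\msp(-z,z_2) = \frac{\sum_\alpha e_\alpha\otimes e^\alpha}{z-z_2} + \sum_{\alpha,\beta}e_\alpha\otimes e_\beta \Bigl\langle \frac{e^\alpha}{z-\psi_1},\frac{e^\beta}{-z_2-\psi_2}\Bigr\rangle^M_{0,2}
\]
and the factorization $W\msp(-z,z_2)=-\sum_\alpha \frac{S\msp(-z)^{-1}e_\alpha\otimes S\msp(z_2)^{-1}e^\alpha}{-z+z_2}$. Pairing both slots with $\tt_1\otimes \tt_2(-z_2)$ (treating $z_2$ as a formal variable), and using the symplectic relations $(S\msp(-z)^{-1}e_\alpha,\tt_1)^\tw=(e_\alpha,S\msp(z)\tt_1)^\tw$ and $(S\msp(z_2)^{-1}e^\alpha,\tt_2(-z_2))^\tw=(e^\alpha,S\msp(-z_2)\tt_2(-z_2))^\tw$, followed by the Cauchy-type identity $\sum_\alpha(e_\alpha,A)^\tw (e^\alpha,B)^\tw=(A,B)^\tw$, yields
\[
\Bigl\langle \frac{\tt_1}{z-\psi_1},\frac{\tt_2(-z_2)}{-z_2-\psi_2}\Bigr\rangle^M_{0,2}
= \frac{(S\msp(z)\tt_1,S\msp(-z_2)\tt_2(-z_2))^\tw-(\tt_1,\tt_2(-z_2))^\tw}{z-z_2}.
\]

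Next, I apply $\Res_{z_2=\infty}\,dz_2$ to both sides. On the left, the formal identity $\tt_2(\psi_2)=\Res_{z_2=\infty}\frac{\tt_2(-z_2)}{-z_2-\psi_2}\,dz_2$ (verified by expanding $\frac{1}{-z_2-\psi_2}$ at $z_2=\infty$) produces $\langle\frac{\tt_1}{z-\psi_1},\tt_2(\psi_2)\rangle^M_{0,2}$. On the right, the formal Cauchy identity
\[
\Res_{z_2=\infty}\frac{f(z_2)\,dz_2}{z-z_2}=[f(z_2)]_+\bigm|_{z_2=z}
\]
(extracting non-negative powers of $z_2$ in the $z_2=\infty$ expansion of $\frac{1}{z-z_2}$) applies: since $S\msp(-z_2)\tt_2(-z_2)=\bS\tt_2(-z_2)$ by the definition of $\bS$ (which uses the $z=\infty$ expansion), and since $(\tt_1,\tt_2(-z_2))^\tw$ is polynomial in $z_2$, I obtain
\[
\Bigl\langle \frac{\tt_1}{z-\psi_1},\tt_2(\psi_2)\Bigr\rangle^M_{0,2}=(S\msp(z)\tt_1,[\bS\tt_2(-z)]_+)^\tw-(\tt_1,\tt_2(-z))^\tw.
\]

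Finally, performing the same residue extraction on the $[0,1]$-definition of Theorem \ref{summation},
\[
\Bigl\langle \frac{\tt_1}{z-\psi_1},\frac{\tt_2(-z_2)}{-z_2-\psi_2}\Bigr\rangle^{[0,1]}_{0,2}
= \frac{(\tt_1,\tt_2(-z_2))^\tw}{z-z_2}+\Bigl\langle \frac{\tt_1}{z-\psi_1},\frac{\tt_2(-z_2)}{-z_2-\psi_2}\Bigr\rangle^M_{0,2},
\]
the residue of the unstable pole $\frac{(\tt_1,\tt_2(-z_2))^\tw}{z-z_2}$ contributes precisely $(\tt_1,\tt_2(-z))^\tw$, which cancels the correction term in the MSP formula and yields the asserted $(S\msp(z)\tt_1,[\bS\tt_2(-z)]_+)^\tw$. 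The main technical care is bookkeeping the expansion conventions: although the global convention of the paper expands $S\msp$ at $z=0$, the symplectic step and the $\bS$-operator both force us to read $S\msp$ on the first slot at its $z=\infty$ expansion, and the two sides of the final identity are to be matched as formal Laurent series in $z$ coming from the same underlying rational function.
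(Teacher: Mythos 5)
Your proposal is correct and follows essentially the same route as the paper: both rest on the factorization $W\msp(z_1,z_2)=-\sum_\alpha\frac{S\msp(z_1)^{-1}e_\alpha\otimes S\msp(z_2)^{-1}e^\alpha}{z_1+z_2}$, move the $S$-matrices onto the insertions by the symplectic/adjoint property, and extract the $[\,\cdot\,]_+$ truncation by a Cauchy-type residue (the paper takes $\Res_{w=0}$ with the $w^-$ expansion, which is the same coefficient extraction as your $\Res_{z_2=\infty}$). Your version merely makes explicit the cancellation of $(\tt_1,\tt_2(-z))^\tw$ between the unstable term of the $[0,1]$-definition and the correction term in the MSP two-point formula, which the paper absorbs in a single line.
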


\begin{proof}
Using the definition of $W$ and \eqref{twopoinnt}, we have
\begin{align*}
 &\bigl<\frac{\tt_1}{z-\psi_1},\tt_2(\psi_2) \bigr>^{[0,1]}_{0,2} 
 =  \Res_{w=0} \Big(\tt_1 \otimes \tt_2(w) ,\frac{S\msp(z)^* e_\alpha \otimes S\msp(w)^*e^{\alpha} }{z+w}  \Big)^{\tw}\\
&\qquad= \Res_{w=0} \Big(S\msp(z) \tt_1 \otimes \bS\tt_2(w) ,\frac{ e_\alpha \otimes  e^{\alpha} }{w+z}\Big)^{\tw}   =   { (S\msp(z) \tt_1, \bS\tt_2(-z)_+)^\tw},
\end{align*} 
where every ``$w$" in the formula means ``$w^-$".
Also, in the last step we have used  $\Res_{w=0} \frac{f(w)}{w-z} =f(z)_+$.
 \end{proof}

\begin{example}\label{unex}
For every 
genus zero white vertex  with two edges $e_1$ and $e_2$, of hours $\alpha_1$ and $\alpha_2$, respectively,
the unstable contribution is  (cf. \eqref{twopoinnt})
\begin{align*}
 \qquad \Bigl< \frac{1^{\alpha_1}}{  \frac{5\ft_{\alpha_1}}{a_{e_1}}-\psi_{1}  } , \frac{1^{{\alpha_2}}}{  \frac{5\ft_{\alpha_2}}{a_{e_2}}-\psi_{2}  }   \Bigr>^{[0,1]}_{0,2}
=&\,   \frac{  (\mS^{\alpha_1}_{a_{e_1}},\mS^{\alpha_2}_{a_{e_2}})^\tw}{\frac{5\ft_{\alpha_1}}{a_{e_1}}+\frac{5\ft_{\alpha_2}}{a_{e_2}}} .
 \end{align*}
For genus zero white vertex  with one edge and one insertion, we have
\begin{align*}
 \qquad \Bigl<\tau, \frac{1^{{\alpha}}}{  \frac{5\ft_{\alpha}}{a_{e}}-\psi_{2}  }   \Bigr>^{[0,1]}_{0,2}
  =&\,   {  (\tau, \mS^{\alpha}_{a_{e}})^\tw} .
 \end{align*}   \end{example}

\subsection{Stable cases}  This is done by the Kontsevich Manin formula. Indeed we have a cycle
version of the KM formula for the $\nmsp$-$[0,1]$ classes. 
\begin{lemma}[KM formula for classes] For stable vertex
 $v\in V_w(\bipg)$ (i.e.  $2g_v-2+n_v>0$)
 \begin{align*}
 \bigl[   \tt_1 (\psi_1 ),\cdots,   \tt_n (\psi_n )  \bigr]^{[0,1]}_{g_v,n_v}=  \left[ \bS \tt _1 (\bar \psi_1)_+  ,\cdots,\bS \tt _n (\bar \psi_n)_+    \right]_{g,n}^{[0,1]}.
 \end{align*}
 \end{lemma}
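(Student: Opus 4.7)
My plan is to prove this as a Kontsevich--Manin descendant-to-ancestor conversion in the $\nmsp$-$[0,1]$ setting. The strategy is to apply Theorem~\ref{R-action-thm} to both sides and show that the resulting stable-graph sums coincide. Since the vertex and edge contributions in the graph-sum formula do not involve the leg insertions, it suffices to match the leg contributions.

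For the LHS, Theorem~\ref{R-action-thm} gives at the $i$-th leg the contribution
$$R(z)^{-1}[S^M(z^-)\tt_i(z)]_+ = R(z)^{-1}\bS\tt_i(z)_+,$$
with $z = \bar\psi$ the ancestor class at the stable-graph-vertex to which the leg is incident. For the RHS, I would re-execute the derivation of Sections~\ref{chaincont} and~3.4 with the ancestor insertion $\sigma_i(\bar\psi_i) := \bS\tt_i(\bar\psi_i)_+$ in place of the descendant $\tt_i(\psi_i)$. The key point is that $\bar\psi_i$ is pulled back from the stable-graph-vertex $v$ incident to the $i$-th leg, hence is constant along the unstable tails of any localization graph. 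Therefore the chain sum formula \eqref{A10} at the leg specializes to
$$[S^M(\psi_v)^{-1}[S^M(\psi_v^-)\sigma_i(\bar\psi_i)]_+]_+.$$
Since $\sigma_i(\bar\psi_i)$ is independent of the flag variable $\psi_v$ and $S^M(z^-) = \id + O(z^{-1})$, the inner bracket collapses to $\sigma_i(\bar\psi_i)$. Applying the Birkhoff factorization $S^M = R\cdot S^{\loc}$ and absorbing $S^{\loc}(\psi_v)^{-1}$ into the local vertex contribution along the lines of \eqref{Tlocclass}, the effective leg contribution for the RHS becomes $R(z)^{-1}\sigma_i(z)$.

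Setting $\sigma_i(z) = \bS\tt_i(z)_+$, the two leg contributions agree: $R(z)^{-1}\bS\tt_i(z)_+$ on both sides. The vertex and edge contributions are identical (they depend only on the graph combinatorics and the $R$-matrix, not on the leg decoration), so both stable-graph sums yield the same class in $H^*(\barM_{g_v, n_v})$, establishing the identity.

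The main obstacle is deriving the ancestor version of Theorem~\ref{R-action-thm}; this requires carefully tracking how the ancestor classes, being pulled back from the stabilized graph rather than native to the localization graph, decouple from the chain sums. The simplification $[S^M(z^-)\sigma]_+ = \sigma$ for $z$-independent $\sigma$ is precisely what produces the asymmetric form $\bS\tt_i(z)_+$ (with the $\bS$ applied only once and followed by truncation) on the right-hand side of the claimed identity.
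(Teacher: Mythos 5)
Your proposal is correct and follows essentially the same route as the paper: both reduce the identity to comparing leg contributions in the stable-graph sum, noting that a descendant insertion $\tt_i(\psi_i)$ contributes $R(\bar\psi)^{-1}[\bS\tt_i(\bar\psi)]_+$ via \eqref{A10} and the descendent--ancestor conversion, while an ancestor insertion $\sigma(\bar\psi)$ contributes $R(\bar\psi)^{-1}\sigma(\bar\psi)$ because the ancestor class decouples from the chain, so setting $\sigma=\bS\tt_i(\cdot)_+$ matches the two. Your account is just a more explicit spelling-out of the paper's terse three-line argument, including the same observation that vertex and edge contributions are insensitive to the leg decoration.
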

 \begin{proof}
Notice that by equation \eqref{A10}, { for an insertion $\tt_j(\psi_j) \in \sH[\psi_j]$}, the corresponding contribution  in the stable graph localization formula at the fixed loci is given by
$$
 [S\msp(\psi)^{-1} [ \bS \tt_j(\psi) ]_+]_+.  \nonumber
$$
Via the descendent-ancestor relation for the fixed loci, as an ancestor insertion it becomes
$$
 R(\bar \psi)^{-1} [ \bS\tt_j(\bar \psi) ]_+.
$$
On the other hand, if we consider an ancestor insertion $\bar \tt_j'(\bar \psi)$ in the master space, the corresponding contribution at the fixed locus is
$$
 R(\bar \psi)^{-1}  \tt_j'(\bar \psi).
$$
By setting $\bar \tt_j'(\bar \psi)= \bS \tt _j (\bar \psi_j)_+ $, we finish the proof.
\end{proof}

By this lemma and the definition of $\mS_{a}^\alpha$ (cf. \eqref{msa}), we see that the ancestor version of the white vertex contribution in Theorem \ref{summation},  after letting $c_{\alpha(e)}\in \FF\sta$ be the constant making $1^\alp=c_{\alpha(e)}1_\alp$, becomes
\beq \label{whitevertexans}
    \Big<  \bigotimes_{i\in L_v}   \tt _i  \bigotimes_{i\in \LL_v}   \bar \psi_i^{k_i} \black  \bigotimes_{e\in E_v,f=(e,v)}  c_{\alpha(e)} \cdot \frac{ \mS_{a_e}^{\alpha(e)}}{  \frac{5\ft_{\alp(e)}}{a_e}- \bar\psi_{f}  }     \Big>^{[0,1]}_{g_v,n_v }.
\eeq
Here we use that, for $\frac{x}{u-z}=\frac{x}{u}(1+\frac{z}{u}+\cdots)$, one has 
$\big[S^M(z^-)\frac{x}{u-z}\big]_+= \Big(\frac{S^M(w)x}{u-z}\Big)|_{w=x}$.
\subsection{Finish the proof of Theorem \ref{thm1}} 
By dimension reasons  \eqref{01des-gen} lies in $\QQ(\zeta_\n)[\![q']\!]$. \black Observe that the localization \ref{01correlator} used to define \eqref{01des-gen} is 
symmetric under the permutations group $S_\n$, which 
acts on the last $\n$ coordinate of $\PP^{4+\n}$.
This implies \eqref{01des-gen} lies in $\QQ[\![q']\!]$. Therefore it is sufficient to show that 
\eqref{01des-gen} lies in $\FF[q]$ with the desired $q$-degree bound. 

Recall our convention that $\deg  p^i= i$, $i\le \n+3$. In this proof we will use the convention that the degree of 
$\sum_{i=0}^{\n+3} c_i p^i$, $c_i\in \FF$, is $\max\{i\mid c_i\neq 0\}$.\footnote{This degree definition 
here is only used in this proof.}
By definition, for each $i$,
 \begin{align}\label{degree} \deg \phi^i\leq \n+3-i.\end{align}

As stated in \cite{NMSP1}, $\deg[\cW_{g,n,(d,0)}]\virt = \n( d+1-g) +n$. Thus for 
pure degree insertion $\{\tau_i\}$, 
\begin{align}\label{MSPco}
\bigl<   \tt_1 \bar \psi^{k_1} ,\cdots,   \tt_n \bar \psi^{k_n} \bigr>_{g,n}\msp
\end{align} vanishes
when $\sum_i  \deg\tt_i  +k_i <  \n(d+1-g) +n$; because $\bar \psi$ are ancestor classes, the   vanishing of \eqref{MSPco} also holds when
$3g-3+n < \sum_i k_i$. Adding the two inequalities, the vanishing holds when  $$\sum_i   \deg  \tt_i   <  \n(d+1-g)  -(3g-3).$$
This proves that \eqref{MSPco} is a polynomial in $\FF[q]$ of degree bounded by
\beq\label{degree01}
g-1+ \frac{3g-3+\sum_i \deg \tt_i }{\n}.
\eeq

We now prove the desired bound of the $q$-degree of $ \left<  -\right>_{g,n}^{[0,1]}$ by induction on $g$. 
When $g=0$, because
$$
 \bigl<  \tt_1 \bar \psi^{k_1} ,\cdots,   \tt_n \bar \psi^{k_n}  \bigr>_{0,n}^{[0,1]} =  \bigl<   \tt_1 \bar \psi^{k_1} ,\cdots,   \tt_n \bar \psi^{k_n}  \bigr>_{0,n}\msp,
$$
the stated bound holds.

We now suppose that the stated bound holds for all genus $h<g$ with arbitrary many insertions.
We apply Theorem \ref{summation} to the genus $g$, $n$ insertions case. Note that the set of bipartite graphs $\Xi_{g,n}$
contains a ``leading" one, the graph $\Lam_g$ with a single genus $g$ white vertex and $n$ markings. 
We let $(\Xi_{g,n})^\circ=\Xi_{g,n}-\{\Lam_g\}$, its complement. 
By Theorem \ref{summation}, 
\begin{align*}
 \bigl<   \tt_1 \bar \psi^{k_1} ,\cdots,   \tt_n \bar \psi^{k_n}   \bigr>_{g,n}\msp=
 \bigl<   \tt_1 \bar \psi^{k_1} ,\cdots,   \tt_n \bar \psi^{k_n}  \bigr>_{g,n}^{[0,1]}+\sum_{\Lam\in (\Xi_{g,n})^\circ}(\ast).
\end{align*} 
Here the first term on the R.H.S. of the identity is the contribution from $\Lam_g$. 
As was argued at the beginning, the L.H.S. of the identity is a $q$-polynomial of degree bound by \eqref{degree01}, thus
to prove the theorem, we only need to show that each term in the summation is a $q$-polynomial of degree 
bounded from above by the same quantity.

Let $\bipg\in (\Xi_{g,n})^\circ$. The contribution from $\Lam$ is, up to a constant multiple,
\begin{align}\label{Con}
\prod_{v\in V_b(\bipg) } \Cont^{\infty}_{[v]}  (\prod_{i\in \LL_v} \bar\psi_i^{k_i}) 
\prod_{v\in V_w(\bipg)} \Big\langle \bigotimes_{i\in L_v}   \tt _i        \prod_{i\in\LL_v}\bar\psi_i^{k_i} 
\bigotimes_{e\in E_v} \frac{\mS_{a_e}^{\alp(e)}}{  \frac{5\ft_\alp}{a_e}- \bar\psi_{(e,v)}  }     \Big\rangle^{[0,1]}_{g_v,n_v }.
\end{align}
 If \eqref{Con} vanishes there is nothing to prove. Assume \eqref{Con} is not zero. Then $B_{[v]}\neq \emptyset$ (c.f. \eqref{infinity-cont}). We substitute $\mS_{a_e}^{\alp(e)}=\sum_{i=0}^{\n+3} \mS_{a;i}^{\alp(e)}\phi^i$ and apply inductions. 
\begin{itemize}
\item
At each stable vertex $v\in V_w(\bipg)$, an easy argument shows that the total genus of all 
black vertices of $\Lam$ is at least one, thus we have $g_v<g$. Consequently, by induction hypothesis
the term $\bigl<-\bigr>^{[0,1]}_{g_v,n_v}$  is a polynomial of degree no more than
$$
g_v-1+ |E_v| + \sum_{e\in E_v}\frac{a_e-1}{5}  +   \frac{3g_v-3+3|E_v|+\sum_i \deg \tt_i }{\n}
$$
where we have used that \eqref{degofmS} and \eqref{degree}  imply that each insertion from edge contributes degree
$$\leq
\deg_q \mS_{a_e;i}^{\alp(e)}+\frac{\n+3-i}{\n} \leq  \frac{a_e-1}{5}+1+ \frac{3}{\n}.
$$
  \item At each unstable white vertex $v\in V_w(\bipg)$, by Example \ref{unex}:\\
(1) if there are one edge $e$ and one insertion $\tau(\bar \psi)$,  the $[0,1]$-correlator is a polynomial of degree
$$
  \lceil \frac{a_e}{5}\rceil-1  +  \frac{\deg \tau }{\n}  \leq  \frac{a_e-1}{5}  +   \frac{  \deg \tt }{\n} ;
$$
(2) if there are two edges $e_1,e_2$,  the $[0,1]$-correlator is a polynomial of degree
$$
 \lceil  \frac{a_{e_1}}{5} \rceil + \lceil  \frac{a_{e_2}}{5}\rceil-1\leq \frac{a_{e_1}-1}{5} +\frac{a_{e_2}-1}{5} +1 +  \frac{3}{\n}
$$
where we have used  \eqref{degofmS}  and that $(\phi^i,\phi^j)^{\tw}=0$ whenever $i,j\geq \n$. 
 
\item  At each black vertex $v\in V_b(\bipg)$
\begin{align*}
\deg \Cont^\infty_{[v]} \leq \,& d_{\infty[v]}+ \frac{2}{5}(g_{v}-1) - \sum_{e\in E_v}  \frac{1}{5} (a_e-1).
\end{align*}  \end{itemize}
  Using  $\sum_{v\in V_b(\bipg)} d_{\infty [v]}  =0$ and $B_{[v]}\neq \emptyset$, we have 
$\sum_{v\in V_b(\bipg)} (g_v-1)  \geq 0$, after applying $d_{0\Theta_\infty}\geq 0$ in \eqref{d-inf-black}, summing over $v$.
Hence
$$
\quad \sum_{v\in V_b(\bipg)} \frac{2}{5}(g_v-1) \leq  \sum_{v\in V_b(\bipg)} (g_v-1).
$$
This proves that the total $q$-degree is no more than
\begin{align*}
\,&\sum_{v\in V_w(\bipg)} (g_v-1+|E_v|)  +\sum_{v\in V_b(\bipg)}\frac{2}{5} (g_v-1) 
 + \frac{1}{\n} \big(\sum_{v\in V_w(\bipg)}3 (g_v-1+|E_v|)+\sum_i \deg \tt_i \big)\\
\leq \,&\sum_{v\in V_w(\bipg)} (g_v-1+|E_v|)  +\sum_{v\in V_b(\bipg)}  (g_v-1)+ \frac{1}{\n} \big(\sum_v 3(g_v-1)+3|E(\bipg)|+\sum_i \deg \tt_i \big)\\
= \,& \quad g-1+ \frac{3  (g-1)+\sum_i \deg \tt_i }{\n} ,
\end{align*}
where in the last step we have used
$$
\quad |E(\bipg)|+ \sum_v (g_v-1 )=
 \sum_{_{v\in V(\bipg)}}g_v +|E(\bipg)|-|V(\bipg)|= \sum_{v }g_v+g(\bipg)-1 = g-1.
$$
This proves the theorem.

\vspace{1cm}

\section{Proof of Theorem \ref{thm3} and \ref{thm4}}
\subsection{Explicit formula for $\n$MSP $S$-function} \label{formulaforS}

 \begin{lemma} \label{QDEforMSPS}
Let $D_p:=p+ z q\frac{d}{d q}$. The $S$-matrix of $\nmsp$ theory $S^M(z)\sta$ satisfies 
the following quantum differential equation 
\begin{equation}\label{QDEforMS}
D_p S\msp(z)^* =   S\msp(z)^*  \cdot A\msp.
\end{equation}
Here we use the same notation $S^M(z)\sta$ to mean the restriction of $S^M(z)\sta$ to $\sH\uev$, which thus is
from $\sH\uev$ to $\sH\uev$. And, w.r.t. the basis $\{\phi_i=p^i\}$ in order $i=0,\cdots,\n+3$,
\beq \label{Anmsp}
\qquad \qquad A\msp ={\small
\   \arraycolsep=1.8pt\def\arraystretch{1.6}
 \left[
  \begin{array} {*{11}{@{}C{\mycolwd}@{}} c}
 0 & &&&&&&  120q\\
1  &0 & &&&&&&770q& \\
&1 & 0 &  &&&&&&1345 q  \\
 && 1 &0 & &&&&&&770q  \\
 &&&1&0 & &&&&&& 120q-\tiny{t^\n} \!\! \\
 \hline
& &&& 1 &0 & \\
 &&&&& 1&0 &  \\
  & &&&&&1&0 &  \\
&  & &&&&&\cdots&\cdots &  \\
& & & &&&&&1&0 &  \\
& &   &  &&&&&&1&0 &  \\
& &   & & &&&&&&1& {0 \,  \quad} 
  \end{array}\right]. \quad}
\eeq
\end{lemma}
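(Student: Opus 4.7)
The plan is to derive the lemma from the general quantum differential equation for the $S$-matrix, and then compute $A\msp$ explicitly using the mirror theorem. By Principle~$(\bigstar)$, the genus-zero $\nmsp$ theory coincides with the $L_p^{\otimes 5}$-twisted $G$-equivariant Gromov--Witten theory of $\PP^{4+\n}$, so the standard QDE on the big phase space applies. Specializing at $\epsilon = 0$ in the divisor direction $p$ and using the divisor equation to convert $z\partial_{t^p}$ into $D_p$, this reduces to
\begin{equation*}
D_p S\msp(z)^* = S\msp(z)^* \cdot (p \star_0),
\end{equation*}
where $(p \star_0)$ is the operator of quantum multiplication by $p$ at the origin. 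The lemma thus becomes the statement that this operator, in the basis $\{\phi_j = p^j\}_{j=0}^{\n+3}$, has matrix $A\msp$.

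For the subdiagonal entries I would verify $(p \star_0)\phi_j = \phi_{j+1}$ for $j = 0, 1, \ldots, \n+2$. Classical multiplication gives exactly $\phi_{j+1}$, and the $q$-corrections are three-point genus-zero invariants of the twisted theory; the divisor equation reduces them to two-point invariants. A virtual dimension count based on $\vdim \barM_{0,3}(\PP^{4+\n}, d) - (5d+1) = (\n-1)d + \n + 3$ eliminates most of these, and the remaining finitely many cases are handled by reading off the subleading asymptotics of $I^M/z$ near $z = \infty$ from the mirror theorem, which pins down the off-diagonal structure of $S\msp(z)^*\phi_j$ at the relevant $z$-orders.

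For the last column $A\msp \phi_{\n+3}$ I would derive a Picard--Fuchs-type identity for $I^M/z$ from the explicit recursion of the $q^d$-coefficients of $I^M$, namely
\begin{equation*}
(p+dz)^4 \bigl((p+dz)^\n - t^\n\bigr) C_d = 5 \prod_{k=1}^4 (5p + (5d-k)z)\, C_{d-1}.
\end{equation*}
Summing over $d \ge 1$, together with the vanishing of $p^4(p^\n - t^\n)$ in $\sH\uev$ and the commutation $(5D_p - kz)\,q = q(5D_p + (5-k)z)$, yields
\begin{equation*}
\bigl[D_p^4(D_p^\n - t^\n) - 5q(5D_p+z)(5D_p+2z)(5D_p+3z)(5D_p+4z)\bigr](I^M/z) = 0 \quad\text{in } \sH\uev.
\end{equation*}
Combined with the iterated QDE $D_p^k(I^M/z) = S\msp(z)^* \phi_k$ for $k \le \n+3$ from the subdiagonal step and the invertibility of $S\msp(z)^*$, this identity pins down $A\msp \phi_{\n+3}$ as a specific combination of $\phi_0, \ldots, \phi_4$. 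The main obstacle will be this final extraction: the Picard--Fuchs operator has $z$-dependent coefficients whereas $A\msp$ is $z$-independent, so one has to carefully track how the relation $p^{\n+4} = t^\n p^4$ in $\sH\uev$ couples different $z$-orders and verify that the resulting collapse forces exactly the palindromic quantum contribution $(120q,\, 770q,\, 1345q,\, 770q,\, 120q)$ together with the $-t^\n$ classical term in the last column, all other entries in that column being zero.
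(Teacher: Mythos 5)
Your reduction of the lemma to computing the quantum multiplication operator $p\,\star_0$ (via the standard QDE and the divisor equation), and your derivation of the reduced Picard--Fuchs identity for $I\msp/z$ in $\sH\uev$, are both sound, and this is essentially the paper's route: the paper runs the Birkhoff factorization algorithm on $I\msp$, at each step normalizing $D_p S\msp(z)^*\phi_k$ by an $\aA$-linear combination of the previously computed columns so as to reach an element of the form $\phi_{k+1}+O(z^{-1})$, and reading off $A\msp$ from the coefficients of that normalization.

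However, you have mislocated the quantum corrections in $A\msp$, and both steps you build on that assumption break. In \eqref{Anmsp} the entries $120q,770q,1345q,770q$ do \emph{not} sit in the last column: they sit at positions $(0,\n-1)$, $(1,\n)$, $(2,\n+1)$, $(3,\n+2)$, i.e.\ $p\star_0 p^{\,j-1}=p^{\,j}+c_j\,q\,p^{\,j-\n}$ for $j=\n,\dots,\n+3$ with $(c_\n,\dots,c_{\n+3})=(120,770,1345,770)$, and only $(120q-t^\n)\,p^4$ lives in column $\n+3$; this is exactly what the recursions \eqref{mSrec} and \eqref{induc} elsewhere in the paper encode. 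A grading check already forces this: $\deg q=\n$ for the $L_p^{\otimes 5}$-twisted theory, so $q\,\phi_0$ has degree $\n=\deg(p\cdot\phi_{\n-1})$ and can only occur in column $\n-1$, never in column $\n+3$ where the degree is $\n+4$. Your own dimension count, done with the correct value $\n d+\n+3$ (not $(\n-1)d+\n+3$) for the twisted virtual dimension, shows that the degree-one three-point corrections to $p\star_0 p^{\,j}$ survive precisely for $j\ge \n-1$, so the claimed identity $(p\star_0)\phi_j=\phi_{j+1}$ fails there; and the identity $D_p^{k}(I\msp/z)=S\msp(z)^*\phi_k$ that you invoke for all $k\le\n+3$ already fails at $k=\n$: one computes $[D_p^{\n}(I\msp/z)]_+=p^{\n}+120q$, whence $S\msp(z)^*\phi_\n=D_p^{\n}(I\msp/z)-120q\,S\msp(z)^*\phi_0$, which is precisely the $(0,\n-1)$ entry of $A\msp$. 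Carrying out the Coates--Givental normalization at every step reproduces the band structure of \eqref{Anmsp}; the companion-matrix form you describe, with all corrections in the last column, is a different and incorrect matrix.
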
  
\begin{proof}
The QDE matrix can be computed via Birkhoff factorization, which is an algorithm starting from $J$-functions to get $S$-matrices. The $S^*(z) \phi_i$ can be computed recursively.
\begin{enumerate}
\item By definition, for $\phi_0=1$, we have $ S\msp(z)\sta 1\,\, =\, z^{-1} J\msp(z).$
\item  Suppose we already obtain closed formulas of 
$$\textstyle
S\msp(z)\sta 1, S\msp(z)\sta \phi_1,\cdots, S\msp(z)\sta \phi_k
.$$ Apply $D_p:=z q\frac{d}{dq} + p$ to  $S\msp(z)\sta \phi_k$.  It will keep the element in the Lagrange cone.  We search for a linear combination of
$$\textstyle
S\msp(z)\sta 1,\quad S\msp(z)\sta \phi_1,\quad\cdots,\quad S\msp(z)\sta \phi_k,\quad D_p S\msp(z)\sta \phi_k
$$ (with coefficients in $\aA$)
such that the combination takes the form
$$
\phi_{k+1} + O(z^{-1}).
$$
By Coates-Givental's result  such linear combination is exactly $S\msp(z)\sta \phi_{k+1}$.

\item  The  process  stops when no new $\phi_k$ appears. Whenever $\phi_1$ generates the even part of quantum cohomology ring, this algorithm  provides us the full $S\msp(z)^*$.
\end{enumerate}
Applying this algorithm to the $I$-function $I\msp(z)$, we obtain exactly \eqref{Anmsp}.
\end{proof}

Recall that the tail contribution at the fixed loci $Q$ is given by (c.f. Sect.\,\ref{tailcont})
\begin{align*}
L\loc(z)|_Q =&\  z\mathbf 1+  J\msp(0,-z) |_{Q,+}\\
= &\ -\sum_{d=1}^\infty z \, q ^d \frac{\prod_{m=1}^{5d}(5H-mz)}{ \prod_{m=1}^d (H-mz)^5 \prod_{m=1}^d \big((H-mz)^\n + \ft^\n \big)} \Big|_{+}.
\end{align*}
By $H^4=0$ and the explicit formula of $I^Q$ \eqref{Q-Ifunc}, we have
\begin{align}\label{LLoc}
L\loc(z)|_Q =&  -\sum_{d=1}^\infty z \, (q') ^d   \frac{\prod_{m=1}^{5d}(5H-mz)}{ \prod_{m=1}^d (H-mz)^5 } \cdot \Big(1+  O(z^{\n})  \nonumber \\
&\qquad+ H\cdot O(z^{\n-1})+ H^2\cdot O(z^{\n-2})+ H^3\cdot O(z^{\n-3})\Big)  \Big|_{+} \nonumber\\
  =  &\quad   z+I^Q(q',-z)  \big|_+ + O(z^{\n-2}) \nonumber\\
  = & \quad z \cdot \big(1- I_0(q') \big)+ I_1(q') H +O(z^{\n-2}).  
\end{align} 

\begin{convention}\label{conv-q}{ In this and next section, we will always regard the symbols 
$$
I_0,\,I_1,\,I_2,\,I_3 ,\, A_1,\, B_1,\, B_2,\, B_3,\,Y
$$
as their original definition with $q$ substituted by $q'= -q/\ft^\n$. For example, in section five and six,
$I_1=I_1(q')$, $Y=Y(q') = (1+5^5 q/\ft^\n)^{-1}$, e.t.c..}
\end{convention}

\begin{lemma} \label{DWmapQ}
Suppose $\n>5$. \footnote{Indeed, a more careful analysis shows $\n>2$ in enough.  Further, for the cases $\n=1,2$ one can still compute the Dijkgraaf-Witten map and prove certain polynomiality of entries of the $R$-matrix, however the computations will be much more involved. 
For example, for the original MSP (i.e. $\n=1$) we have 
$$\tau\loc(0)|_Q =  5!q+\tau_Q  +\tau_2 H^2 +\tau_3 H^3$$
with $\tau_2 = - \frac{1}{2} \, I_{1,1} I_{2,2}$ and $\tau_3 =  I_{1,1}^2I_{2,2} \big( -\frac{5}{12}+\frac{Y}{4}-\frac{B}{2}-\frac{A}{4} \big)$ (here $I_{2,2}$ is defined in \eqref{I22}). }
We have the following formula for $\tau\loc(0)|_Q$  and $T\loc(z)|_Q$ (see \eqref{taulocal} and \eqref{Tloc} for their definitions)
\begin{align*}
\tau\loc(0)|_Q = \,& \tau_Q(q'):=\frac{I_1(q')}{I_0(q')}H,\quad \text{and}\quad
T\loc(z) |_Q = \big(1- I_0(q') \big) \cdot  \mathbf 1_Q z + O(z^{\n - 2}) .
\end{align*}
\end{lemma}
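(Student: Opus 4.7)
The plan is to combine the explicit formula \eqref{LLoc} for $L\loc(z)|_Q$ with Givental's Lagrangian cone formalism and the classical quintic mirror theorem. By \eqref{twq'}, the twisted $S$-matrix at $Q$ coincides with the quintic $S$-matrix after $q \mapsto q'$, so the Lagrangian cone $\mathcal L^{Q,\tw}$ is identified with $\mathcal L^Q$ under this reparametrization. The point $J^M(0,-z)|_Q$ lies on $\mathcal L^{Q,\tw}$ by Lemma \ref{cone}, and the Dijkgraaf--Witten parameter $\tau\loc(0)|_Q$ from \eqref{taulocal} is characterized as the ``small-slice coordinate'' of this point on its ruling of the cone.

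To identify this coordinate, I would directly expand \eqref{nmspI} at $z = \infty$: using $H^4 = 0$ together with the geometric expansion $((H+mz)^\n - t^\n)^{-1} = ((H+mz)^\n)^{-1}(1 + O(t^\n/z^\n))$ --- and observing that the substitution $q \mapsto q' = -q/t^\n$ emerges naturally from the leading term --- one verifies
\begin{equation*}
I^M(q,z)|_Q = I^Q(q',z) + O(z^{-(\n-3)}).
\end{equation*}
Since $\n > 5$, this error is strictly lower-order than the $H^i z^{1-i}$ coefficients with $i \le 3$, which are exactly what encode the slice coordinate. Combined with the quintic mirror theorem $I^Q(q,z) = I_0(q)\,J^Q(\tau_Q(q),z)$ with $\tau_Q(q) = (I_1(q)/I_0(q))H$, this identifies $\tau\loc(0)|_Q$ with $\tau_Q(q')$, giving the first assertion.

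For the second assertion, I would substitute into \eqref{Tloc}: $T\loc(z)|_Q = S^{Q,\tw}_{\tau_Q(q')}(z)\bigl(L\loc(z)|_Q - \tau_Q(q')\bigr)_+$. Combining \eqref{LLoc} with the first assertion yields
\begin{equation*}
L\loc(z)|_Q - \tau\loc(0)|_Q = (1 - I_0(q'))\bigl[z\,\mathbf 1_Q - \tau_Q(q')\bigr] + O(z^{\n - 2}),
\end{equation*}
which is already a polynomial in $z$, so the $(\cdot)_+$ operator is trivial. The desired formula then follows from the refined identity $S^{Q,\tw}_{\tau_Q(q')}(z)\bigl(z\,\mathbf 1_Q - \tau_Q(q')\bigr) = z\,\mathbf 1_Q + O(z^{-(\n-3)})$, which is the $S$-matrix reformulation of the comparison $I^M|_Q = I^Q(q',\cdot) + O(z^{-(\n-3)})$ established above, and which upon multiplication by the prefactor $(1 - I_0(q'))$ and collection of errors converts the $O(z^{-(\n-3)})$ tail into an $O(z^{\n-2})$ contribution in the power-series expansion of $T\loc(z)|_Q \in \aA[\![z]\!]$.

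The main obstacle is rigorously tracking the cancellations in the last step. The quintic mirror theorem alone only guarantees $J^Q(\tau_Q,z) = z + \tau_Q + O(z^{-1})$, so cancellations beyond $z^{-1}$ in $S^{Q,\tw}_{\tau_Q(q')}(z)(z\mathbf 1_Q - \tau_Q(q'))$ must come from the additional $\n$-dependent structure of the $\nmsp$ theory. This is handled by using the identification $zS^{Q,\tw}_\tau(z)^*\mathbf 1 = J^{Q,\tw}(\tau,z)$ and the symplectic property of $S\msp$, together with the refinement $I^M|_Q = I^Q(q',\cdot) + O(z^{-(\n-3)})$, to propagate error terms through the Birkhoff factorization $S\msp = R\loc S\loc$ and conclude that the $z^2,z^3,\ldots,z^{\n-3}$ coefficients of $T\loc(z)|_Q$ all vanish.
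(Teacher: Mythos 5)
Your overall route is in substance the paper's: expand $I\msp(q,z)|_Q$ against $I^Q(q',z)$, feed the result into the Dijkgraaf--Witten map \eqref{taulocal}, and then compute $T\loc(z)|_Q$ from \eqref{Tloc}. Your factorization $L\loc(z)|_Q-\tau_Q=(1-I_0(q'))(z\mathbf 1_Q-\tau_Q)+O(z^{\n-2})$ together with the string/dilaton identity $S_{\tau}(z)(z\mathbf 1-\tau)=z\mathbf 1+O(z^{-1})$ is in fact a clean way to finish the second half. However, there is a concrete error at the pivot of your argument: the discrepancy $I\msp(q,z)|_Q-I^Q(q',z)$ does \emph{not} lie in $O(z^{-(\n-3)})$. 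Writing $\prod_{m=1}^d\bl(H+mz)^\n-\ft^\n\br=(-\ft^\n)^d\prod_{m=1}^d\bl1-(H+mz)^\n/\ft^\n\br$ and using $H^4=0$, the correction terms have the form $H^{i+j}z^{\,1+\n-(i+j)}$ with $i+j\le 3$, i.e.\ they sit in \emph{high positive} powers of $z$, namely $O(z^{\n-2})$ --- exactly as recorded in \eqref{LLoc}. This matters for the first assertion: those corrections are part of the positive-power input $L\loc(z)|_Q$ that enters \eqref{taulocal}, and the ruling (your ``slice coordinate'') of a cone point carrying nontrivial positive $z$-powers is not read off from its $z^1$ and $z^0$ coefficients alone. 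So you cannot discard them as ``strictly lower-order''; they must be killed by a degree count --- a descendant insertion $\psi^k$ with $k\ge\n-3$ pushes the total insertion degree in $\bigl<e_\alpha,\mathbf 1,L\loc(-\psi)^n\bigr>^{Q,\tw}_{0,n+2}$ past the virtual dimension once $\n>5$. That is where the hypothesis actually enters, and it is the step the paper performs (tersely) when it replaces $L\loc(-\psi)|_Q$ by $(1-I_0)\psi+I_1H$ inside the correlators.

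For the second assertion the same misplacement inverts your bookkeeping: a genuinely negative-power tail would simply be annihilated by $[\,\cdot\,]_+$, and you would end up proving $T\loc(z)|_Q=(1-I_0(q'))\,z\mathbf 1_Q$ exactly, which is not the statement. The $O(z^{\n-2})$ remainder in the lemma comes precisely from the positive-power correction in \eqref{LLoc} being carried through $S^{Q,\tw}(z)=\id+O(z^{-1})$; there is no mechanism by which a $z^{-(\n-3)}$ tail ``converts'' into positive powers. Once the error term is placed on the correct side of $z^0$ and the degree-count justification is supplied for the Dijkgraaf--Witten computation, your argument closes and coincides with the paper's proof.
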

\begin{proof}
By  \eqref{taulocal}, \eqref{LLoc}, and the method deriving \eqref{Guo-dila},  we have  
\begin{align*}
\tau\loc(0)|_Q= & \sum_{\alpha,n} \frac{1}{n!} e^{\alpha} \left<  {e_\alpha} ,\mathbf 1,L\loc(-\psi)|_Q^n \right>^{Q,\tw}_{0,n+2} \\
 = & \sum_{\alpha,n} \frac{1}{n!} e^{\alpha} \left<  {e_\alpha} ,\mathbf 1, \big((1-I_0(q')) \psi + I_1(q') H  \big)^n \right>^{Q,\tw}_{0,n+2} \\
= & \sum_{\alpha,n} \frac{1}{n!} e^{\alpha} \left<  {e_\alpha} ,\mathbf 1, \big(  \tau_Q(q') \big)^n \right>^{Q,\tw}_{0,n+2}  = \tau_Q(q').
\end{align*}   
 For the second equality, we use \eqref{LLoc}, 
\eqref{quinticS}, and \eqref{twq'} to obtain
\begin{align*}
T\loc(z) |_Q =\,&  S_{\tau_Q}^{Q,\tw}(z) \big( L\loc(z)-\tau_Q\big) |_{Q,+}  +  O(z^{\n - 2})   \\
=\,& \big(1- I_0(q') \big) \cdot  \mathbf 1_Q z  +  O(z^{\n - 2}).
\end{align*}
This proves the lemma.
  \end{proof}
\begin{example} \label{leadingR} Similar to  \ref{LLoc}, we have
$$
S\msp(z)^* \mathbf 1|_{Q,+} =
z^{-1} J\msp(z)|_{Q,+} = I_0(q')+O(z^{\n-3})
$$
Applying \eqref{QDEforMS} to $\phi_0=p^0=\mathbf 1$, we get  \begin{align*}
S\msp(z)^* p|_{Q,+} &= \big( D_p S\msp(z)^* 1 \big) |_{Q,+}
  =     H  I_0(q')    + zD(I_0(q'))  +  D(I_1(q'))H+O(z^{\n-2})  .
\end{align*}
Recall  \eqref{DefineR} and \eqref{twq'} give formally  
\begin{align}\label{formally}
&\big( R(z)^{-1} x \big) |_Q =  S^Q(q',z)  \big(  S^{M}(z)^{-1} x \big)|_Q,  \qquad \forall x\in\sH.
\end{align} 
 Together with \eqref{quinticS} 
  they lead to the first two $R$ matrix entries:
\begin{align*}
R(z)^* \mathbf 1 |_{Q} 
   &=     I_0(q')+O(z^{\n-3}),  \\
R(z)^* p|_{Q} 
   &=     zD(I_0(q'))  +  H\, I_0(q')  \,I_{1,1}(q')+O(z^{\n-2})  ,
\end{align*}
where in the last equality we have used $I_{1,1} = 1+D(I_1/I_0)$.
\end{example}

\subsection{Key Lemmas}

We define the normalized basis for $H^*(Q)$ and its dual by \footnote{The same normalized basis has been used in \cite[Sect.\,6.2]{GR2}, which naturally appear in the computation of the canonical basis for the twisted theory (see also \cite[Sect\,6.3]{GR1}).}
$$
 \tp_b:=  { I_0(q') I_{1,1}(q')\cdots I_{b,b}(q')} H^{b}  \,,
\qquad
 \tp^b:= \frac{-\ft^\n}{5\,I_0(q') I_{1,1}(q')\cdots I_{b,b}(q')} H^{3-b}  ;
$$
We define the normalized basis for $H^*(\pt_\alpha)$ and its dual by 
$$\bar {\mathbf 1}_\alpha:=L^{-\frac{\n+3}{2}}{\mathbf 1}_\alpha,\qquad  \bar {\mathbf 1}^\alpha:=L^{\frac{\n+3}{2}}{\mathbf 1}^\alpha.$$
We set $L_\alpha:=\xi_\n^\alpha \, \ft \cdot L(q') = \xi_\n^\alpha  ( \ft^\n+5^5 q )^{1/\n} $  and introduce
$$
{(  R_k)_{j}}^b  := (R_k \tp^b, p^j)^\tw,\qquad{(  R_k)_{j}}^\alpha  := L_\alpha ^{-(j-k)}\cdot   (R_k \bar{\mathbf 1}^\alpha , p^j)^\tw.
$$

\begin{lemma}\label{keylem} Suppose $k<\n-3$.  At the fixed loci $Q$, we have
\beq  \label{vanishRQ}
{(  R_k)_{j}}^b  =0,  \quad \text{ if  }  j \not \equiv  b+k \!\mod \n \,,
\eeq  and for $b=0,1,2,3$
\begin{equation} \label{polyofRQ}
\textstyle
{(  R_k)_{b+k}}^b  \ \ \text{and}\ \  \frac{Y}{\ft^\n} \! \cdot \! {(  R_k)_{b+\n+k}}^b  \ \in  \  \mathbb Q[A,B,B_2,B_3,Y].
\end{equation}
At the fixed loci $\ffp_\alpha$, we have
\begin{equation} \label{polyofRpt}
{ (R_k)_{j}}^{ \alpha} \in  \mathbb Q[Y]_{k+\lfloor\frac{j}{\n} \rfloor}
\end{equation}
  which is independent of $\alpha$.
\end{lemma}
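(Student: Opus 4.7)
The plan is to compute the entries of $R(z)$ recursively by exploiting the Birkhoff factorization \eqref{DefineR}, which on $\sH_Q$ reduces to $R(z) = S\msp(z)(S^{Q,\tw}(z))^{-1}$ and on $\sH_{\ffp_\alpha}$ to $R(z) = S\msp(z)\Delta^{\ffp_\alpha}(z)(S^{\ffp_\alpha,\tw}(z))^{-1}$, together with the explicit QDE \eqref{QDEforMS} that generates $S\msp(z)^*\phi_j$ from $z^{-1}J\msp(z)$ by iterated $D_p$-action. Since the quintic $S^Q$ and $\Delta^{\ffp_\alpha}$ are essentially classical inputs, this reduces the computation of each $(R_k)_j^\bullet$ to controlled bookkeeping inside the Yamaguchi-Yau ring $\bR = \QQ[A_1,B_1,B_2,B_3,Y]$.

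For the vanishing \eqref{vanishRQ}, the crucial observation is that after substituting $q = -\ft^\n q'$ and geometrically expanding $((H+mz)^\n-\ft^\n)^{-1} = -\ft^{-\n}\sum_{k\ge 0}(H+mz)^{\n k}\ft^{-\n k}$ in the mirror formula, $J\msp(z)|_Q$ contains only $\ft$-powers in $\n\ZZ_{\le 0}$. Iterating $D_p = H + zq\partial_q$ at $Q$ (where $q\partial_q = q'\partial_{q'}$) preserves $\ft$-powers, so every $(S\msp(z)^*\phi_i)|_Q$ lives in the $\ft^{\n\ZZ}$-lattice. For the $\ffp_\alpha$-contribution to the pairing, the $\zeta_\n^\alpha$-dependence entering through $\ft_\alpha = -\zeta_\n^\alpha\ft$ is controlled by $\sum_{\alpha=1}^\n \zeta_\n^{\alpha m} = \n\cdot \delta_{m\equiv 0(\n)}$, which again forces the surviving $\ft$-weight to lie in $\n\ZZ$. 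Combined with the evident grading that $(R_k)_j^b$ has $\ft$-degree $j-b-k$, this yields $j\equiv b+k\pmod\n$.

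For the polynomial statement \eqref{polyofRQ}, the $\ft^0$-part of $(S\msp(z)^*\phi_i)|_Q$ coincides with $D_p^i(z^{-1}I^Q(q',z))$, whose $z$-expansion coefficients are generated by iterated $D$-action on $I_0,I_1,I_2,I_3$ and hence lie in $\bR$ by the closure \eqref{R}. The normalizing factors $I_0 I_{1,1}\cdots I_{b,b}$ in $\tp^b$ are designed so that the ratios appearing in the Birkhoff factorization again land in $\bR$ (as already confirmed by $(R_0)_0^0 = 1$ and $(R_0)_1^0 = 0$ from Example~\ref{leadingR}). The first $\ft^{-\n}$-correction to $J\msp(z)|_Q$ introduces exactly one factor of $Y^{-1}=1-5^5q'$ per band, so clearing this single pole by multiplication with $Y/\ft^\n$ returns $(R_k)_{b+\n+k}^b$ to $\bR$. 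For \eqref{polyofRpt}, the $\ffp_\alpha$-localization of $J\msp(z)$ depends on $q$ only through $L(q')$ (via $\tau_\alpha(q')$, whose derivative from \eqref{tauloc0} equals $\zeta_\n^\alpha\ft(L(q')-1)$), while $\Delta^{\ffp_\alpha}(z)$ is $q$-independent; iterating $D_p|_{\ffp_\alpha} = (-\ft_\alpha) + zq\partial_q$ therefore introduces only powers of $L$ and of $L_\alpha = \zeta_\n^\alpha\ft L$, and the normalization factor $L_\alpha^{-(j-k)}$ absorbs both the $\zeta_\n^\alpha$-twist and the $\ft_\alpha$-dependence, leaving a polynomial in $Y = L^{-\n}$ of degree at most $k+\lfloor j/\n\rfloor$ independently of $\alpha$.

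The main obstacle will be carrying out the polynomiality analysis: verifying that the $I_{k,k}$-denominators in the normalized basis $\tp^b$ really cancel the numerators produced by the iterated QDE (so that the answer lies in $\bR$ rather than merely in $\mathrm{Frac}\,\bR$), and that the $Y^{-1}$-poles arising from the $\ft^{-\n}$-correction have exactly multiplicity one in each band. Both should follow from the special combinatorial shape of $A\msp$ in \eqref{Anmsp}, whose only $q$-dependent entries concentrate in the last column with coefficients $120,770,1345,770,120$ dictated by the quintic Picard-Fuchs structure, but the degree bookkeeping will be the most technically delicate part of the proof.
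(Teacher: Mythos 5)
Your overall strategy --- derive the entries of $R$ from the Birkhoff factorization \eqref{DefineR} together with the QDE \eqref{QDEforMS}, and propagate polynomiality through the recursion --- is the same as the paper's, and your $\ft$-grading argument for the vanishing \eqref{vanishRQ} is a reasonable substitute for the paper's observation that the residue $j-b-k \bmod \n$ is preserved by the recursion \eqref{induc}. But there are two genuine gaps. The more serious one is the base case of \eqref{polyofRpt}: your claim that ``the $\ffp_\alpha$-localization of $J\msp(z)$ depends on $q$ only through $L(q')$'' is not true of the restricted $I$-function, which is a full hypergeometric-type series in $q$; what is true is that $R^*(z)\mathbf 1|_{\ffp_\alpha}$ depends only on $L$, and that statement \emph{is} ${(R_k)_0}^\alpha\in\QQ[Y]_k$, i.e.\ the $j=0$ case you need as the starting point of your iteration. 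Extracting it requires re-expanding $e^{-\tau_\alpha/z}\Delta^{\ffp_\alpha}(z)^* z^{-1}J\msp(z)|_{\ffp_\alpha}$ as a power series in $z$ through an essential singularity, which the paper does by solving the Picard--Fuchs equation \eqref{PFforRpt} order by order in $z$; each order leaves an undetermined constant of integration, and fixing those constants (the condition \eqref{initialcond}) consumes the entire stationary-phase analysis of Givental's oscillatory integral in Section 6. Your proposal assumes this base case rather than proving it.

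The second gap is the one you flag yourself: showing that the $Q$-entries land in $\sR$ rather than in its fraction field. You leave this as ``the most technically delicate part,'' but it is precisely where the paper's choice of normalized basis pays off: in the basis $\tp_b$ the recursion \eqref{induc} has coefficients $D+C_b$ with $C_b=D\log(I_0I_{1,1}\cdots I_{b,b})\in\sR$, the shift term ${(R_k)_{j-1}}^{b-1}$, and the term $-c_jq\,{(R_k)_{j-\n}}^b$, so membership in $\sR$ (and the $Y/\ft^\n$-normalization for the second band) propagates automatically from the initial column ${(R_k)_0}^b=\delta_{b,0}\delta_{k,0}$ --- no cancellation of $I_{k,k}$-denominators ever needs to be verified. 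Without writing down that recursion, your band-by-band expansion of $J\msp|_Q$ in powers of $\ft^{-\n}$ does not by itself establish that the single $Y^{-1}$-pole claim holds after iterated application of $D_p$ and division by the normalizing factors.
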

\begin{proof}
First we prove the vanishing property \eqref{vanishRQ} and polynomiality \eqref{polyofRQ} of the $R$-matrix restricted at the fixed loci $Q$. 
By Example \ref{leadingR}, we have
\beq \label{initial} 
{(  R_k)_{0}}^b =   \delta_{b,0} \delta_{k,0}   \quad  \ \text{for}\quad  k<\n-3.
\eeq
To compute the other columns,  recall that
 the QDE for $S\msp$ and $S^Q$ are (c.f. \eqref{QDEforMS}, \eqref{qQDE-star})\footnote{By Convention \ref{conv-q}, to entries of $S^Q,A^Q$(\eqref{quintic-QDE}, \eqref{quinticS}), we apply $q\mapsto q'$, and still denoted them as $S^Q,A^Q$ in QDE.} 
 $${ 
\begin{aligned}
\big( p+ z\,q\frac{d}{dq} \big)S\msp(z)^{*} = & \ S\msp(z)^{*} \cdot A\msp
,\\
\big( H+ z\,q\frac{d}{dq} \big)S^Q(z)^{*}  = & \  S^Q(z) ^{*} \cdot   A^Q. 
\end{aligned}}$$
Together with the Birkhoff factorization \eqref{formally},
we obtain the QDE for $R^0(z)^*$:
\beq
(z D +A^Q    )\,  (R(z)^*  x)  \big|_Q  =   ( R(z)^* \cdot A\msp x )    \big|_Q , \qquad \forall x\in\sH.
\eeq
This implies, under the basis $\{ \tp_i\}$, 
  for $j=1,\cdots ,\n+3$
\beq  \label{induc}
{(  R_k)_{j}}^b =  (D+C_b)  {(  R_{k-1})_{j-1}}^{b}   +{(  R_k)_{j-1}}^{b-1} -c_j  q \, 
 {(  R_k)_{j-\n}}^b,
\eeq
\beq
 C_b:= D \log (I_{0}I_{1,1}\cdots I_{b,b}) \in \sR, \qquad b=0,1,2,3 ,
\eeq
\beq \label{constantscj}
  (c_{j})_{j=1,\cdots,\n+3} :=  (0,\cdots,0,120,770,1345,770).
\eeq
Recall the ring $\sR$ is closed under  $D$, hence  \eqref{polyofRQ} follows from \eqref{induc} by recursion.  

For \eqref{vanishRQ}, just notice that in the inductive formula \eqref{induc}, the difference of the index $j-b-k \mod \n$ is preserved. Hence  \eqref{vanishRQ} is implied by \eqref{initial}  recursively.

\medskip

Next, we prove the property \eqref{polyofRpt} of the $R$-matrix restricted at the fixed loci $\npt$. The $j=0$ case  is proved in Section \ref{keylemRpt}. We now prove $j>0$ case
.  Apply   \eqref{QDEforMS} to \eqref{DefineR}:
$$
(\mathbf 1^\alpha,  R(z)^* \phi_j  )  =   e^{u_\alpha/ z}  \Delta^{\pt_\alpha}(z)^* (\mathbf 1^\alpha, S\msp(z)^* \phi_j).
$$
we see that for $j>1$, the entries $(\mathbf 1^\alpha,  R(z)^* \phi_j )$ can be recursively computed  via
$$
(\mathbf 1^\alpha, R(z)^* \phi_{j}) =  D_{L_\alpha} (\mathbf 1^\alpha, R(z)^* \phi_{j-1}) - c_j \,  q \,(\mathbf 1^\alpha, R(z)^* \phi_{j-\n} )
$$
where $(c_{j})_{j=1,\cdots,\n+3}$ is defined as in \eqref{constantscj}. Namely, \footnote{Note we have used $L^{-\n} \, q  = {(1-Y)}/{5^5}$, $L_\alpha = \zeta_\n^\alpha t\,L$ and $D L  =  \frac{1}{\n} L  (1-Y)$. }
$$
{(R_k)_j}^\alpha \! = \Big(D-{\textstyle \frac{1}{\n}\big(\frac{\n+3}{2}-j+k\big)(1-Y)}\Big)  {(R_{k-1})_{j-1}}^\alpha \!+ {(R_{k})_{j-1}}^\alpha\!+ \frac{c_j}{5^5} \, (1-Y) \, {(R_{k})_{j-\n}}^\alpha .
$$ 
By induction on $j$, using $D(Y)=Y(Y-1)$ and the initial result ($j=0$), we conclude ${(R_k)_j}^\alpha$ does not depend on $\alpha$ and  ${ (R_k)_{j}}^{ \alpha} \in  \mathbb Q[Y]_{k+\lfloor\frac{j}{\n} \rfloor}
$.
\end{proof}

\begin{corollary} \label{DWmappt}
The Dijkgraaf-Witten map at $\ffp_\alp$ is given by
$$
 \tau\loc(0) \black |_{\ffp_\alp} = \tau_\alp(q') =  -t_\alpha \int_0^{q'}  ( L(x) -1 )  \frac{dx}{x}\black
,\qquad 
$$
and the tail contribution at $\ffp_\alp$ is given by{\footnotesize
\begin{align*}
\,T(z) |_{\ffp_\alp}\!\!&=  \mathbf 1_\alpha z - L^{\frac{-\n-3}{2}} \, \mathbf 1_\alpha z \cdot
\bigg[1- \bigg(  {  \Big( \frac{\n}{24}+{\frac{43}{120}} \Big) }+ {\frac {Y-1}{\n} \Big( {\frac{47}{24}}+\frac{23 \n}{24}+\frac{{\n}^{2}}{12} \Big) }
 \bigg) \frac{z}{L_\alp} +O(z^2)  \bigg].
 \end{align*}}
\end{corollary}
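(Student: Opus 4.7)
The plan is to restrict the Birkhoff factorization \eqref{sr} to $\ffp_\alpha$ and extract both formulas from the behavior at $z=0$. Since $\sH_{\ffp_\alpha}$ is one-dimensional and $S^{\ffp_\alpha,\tw}_\tau(z) = e^{\tau/z}$ by \eqref{twq'}, applying \eqref{sr} to $\mathbf 1$ and projecting onto the $\ffp_\alpha$-component yields the scalar identity
\begin{equation*}
(S\msp(z)^{*}\mathbf 1)|_{\ffp_\alpha} = e^{\tau\loc(0)|_{\ffp_\alpha}/z}\cdot (R\loc(z)^{*}\mathbf 1)|_{\ffp_\alpha},
\end{equation*}
with the right-hand factor a power series in $z$ regular at $z=0$. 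Thus $\tau\loc(0)|_{\ffp_\alpha}$ is the coefficient of the essential singularity at $z=0$, and the leading regular value determines the scalar by which $R\loc(0)$ acts on $\sH_{\ffp_\alpha}$.

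To evaluate these, I substitute $p = -t_\alpha$ into the mirror formula \eqref{nmspI}. Using the crucial identity $(-t_\alpha)^\n = t^\n$, the denominator factor $(-t_\alpha + mz)^\n - t^\n = t^\n[(1-mz/t_\alpha)^\n - 1]$ has a simple zero at $z=0$, so the $d$-th summand of $z^{-1}I\msp(q,z)|_{\ffp_\alpha}$ has a pole of order $d$. Applying $D=qd/dq$ to the scalar identity above and comparing $1/z$-coefficients gives the ordinary differential equation
\begin{equation*}
D\tau\loc(0)|_{\ffp_\alpha} = -t_\alpha\bigl(L(q')-1\bigr),
\end{equation*}
which integrates with $\tau\loc(0)|_{q=0}=0$ to the claimed formula $\tau_\alpha(q') = -t_\alpha\int_0^{q'}(L(x)-1)\,dx/x$. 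A parallel analysis of the $z^0$-coefficient identifies $R\loc(0)|_{\sH_{\ffp_\alpha}}$ as multiplication by $L(q')^{(\n+3)/2}$, in agreement with the dilaton factor $\delta_{\ffp_\alpha}$ of \eqref{Guo-dila}; equivalently $R\loc(0)^{-1}\mathbf 1|_{\ffp_\alpha} = L(q')^{-(\n+3)/2}\mathbf 1_\alpha$.

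With these ingredients, $T(z)|_{\ffp_\alpha}$ follows from the composition identity \eqref{comp-tail}, which at $\ffp_\alpha$ reads
\begin{equation*}
T(z)|_{\ffp_\alpha} = \Delta^{\ffp_\alpha}(z)^{-1}T\loc(z)|_{\ffp_\alpha} + z\bigl(\mathbf 1_\alpha - \Delta^{\ffp_\alpha}(z)^{-1}\mathbf 1_\alpha\bigr),
\end{equation*}
with $T\loc(z)|_{\ffp_\alpha} = z(\mathbf 1_\alpha - R\loc(z)^{-1}\mathbf 1|_{\ffp_\alpha})$ from \eqref{Tloc}. Since $\Delta^{\ffp_\alpha}(0) = 1$, the $z^1$-coefficient of $T(z)|_{\ffp_\alpha}$ is $(1-L^{-(\n+3)/2})\mathbf 1_\alpha$ from the previous step. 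The $z^2$-coefficient combines the $k=1$ term of $\Delta^{\ffp_\alpha}$ from \eqref{deltapt} (using $B_2/2 = 1/12$ together with the identity $\sum_{\beta\ne\alpha}1/(t_\alpha - t_\beta) = (\n-1)/(2t_\alpha)$, obtained from the logarithmic derivative of $(x^\n - 1)/(x-\zeta_\n^\alpha)$ at $x = \zeta_\n^\alpha$) with the $z^1$-coefficient of $R\loc(z)^{-1}\mathbf 1|_{\ffp_\alpha}$ extracted from the next-order term of the scalar identity. Simplifying via $DL = L(1-Y)/\n$ and $L^{-\n}q = (1-Y)/5^5$ produces the stated numerical constants.

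The main obstacle will be the closed-form identification of $\tau_\alpha(q')$ and $L(q')^{(\n+3)/2}$ from the resummation of pole contributions across $d$ in $z^{-1}I\msp(q,z)|_{\ffp_\alpha}$: the $L$-series form is not term-by-term obvious from the mirror expression and requires careful matching of singular and regular parts. Once this is established, the remaining expansion in the tail formula is a mechanical, if tedious, computation.
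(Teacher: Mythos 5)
Your overall frame coincides with the paper's: both arguments rest on the identity $z^{-1}I\msp(q,z)|_{\ffp_\alpha}=e^{\tau_\alpha/z}\cdot(\text{series regular at }z=0)$ coming from the mirror theorem and the Birkhoff factorization, with $\tau_\alpha$ read off from the essential singularity and the regular factor giving $R^*(z)\mathbf 1|_{\ffp_\alpha}$, after which $T(z)|_{\ffp_\alp}$ is assembled exactly as you say from \eqref{Tloc}, \eqref{comp-tail} and the explicit $\Delta^{\ffp_\alp}$ of \eqref{deltapt}. The problem is that the step you defer as ``the main obstacle'' --- producing $\tau_\alpha$ and the first two regular coefficients in closed form --- is the entire content of the proof, and the substitute you offer (``applying $D$ and comparing $1/z$-coefficients'') would fail: at each order $q^d$ the factor $e^{\tau_\alpha/z}$ contributes to every power $z^{-1},\dots,z^{-d}$, mixed nonlinearly with the coefficients of the regular factor, so no single Laurent coefficient of $D\bigl(z^{-1}I\msp|_{\ffp_\alpha}\bigr)$ isolates $D\tau_\alpha$; a direct resummation of the order-$d$ poles across $d$ is precisely what cannot be done by inspection.

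The device you are missing is the Picard--Fuchs equation \eqref{PFofMSP}. Because $I\msp$ satisfies an explicit linear ODE, one substitutes the WKB ansatz $e^{\tau_\alpha/z}L^{-\frac{\n+3}{2}}\bigl(1+\tfrac{r_1}{L_\alpha}z+\cdots\bigr)$ into \eqref{PFforRpt} and collects powers of $z$: the $z^0$-coefficient is the algebraic (eikonal) equation $L_\alpha^5(L_\alpha^{\n}-\ft^{\n})=5^5qL_\alpha^5$ for $L_\alpha=-\ft_\alpha+D\tau_\alpha$, and selecting the nonzero branch (the fivefold root $L_\alpha=0$ belongs to the quintic block and must be discarded --- a branch-selection issue that does not even surface in your setup) gives $L_\alpha=-\ft_\alpha L(q')$ and hence the stated integral for $\tau_\alpha$; the $z^1$- and $z^2$-coefficients are then first-order linear ODEs in $q$ for $r_0$ and $r_1$, integrable in closed form, with integration constants fixed by $R|_{q=0}=\Delta^{\ffp_\alp}$ (equivalently by the stationary-phase condition \eqref{initialcond}, which is where the oscillatory-integral analysis of Section 6 enters). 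Your auxiliary computations (the substitution $p=-\ft_\alpha$, the identity $\sum_{\beta\neq\alpha}(\ft_\alpha-\ft_\beta)^{-1}=\tfrac{\n-1}{2\ft_\alpha}$, the use of $DL=\tfrac{1}{\n}L(1-Y)$) are correct and would all be needed, but they only become usable once the recursion above has been set up; as written, the proposal stalls at the first formula.
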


\begin{lemma}\label{keylem2}
Suppose
$$
V(z,w)  = \sum_{k,l \geq 0} V_{kl}  z^k w^l.
$$
 Then the coefficients $V_{kl} $ can be written in the following form
\begin{align} \label{vkl}
\begin{aligned}
 V_{kl} = &\textstyle \sum_{a,b=0}^3 (  V_{kl})^{ab} \tp_a\otimes \tp_b
+ \sum_{b=0}^3 \sum_{\alpha=1}^\n L_\alpha^{2-b-k-l}  \cdot (  V_{kl})^{\alpha b} \bar{\mathbf 1}_\alpha\otimes \tp_b+
\\
  & +\textstyle \sum_{\alpha,\beta =1}^\n  \sum_{j}  
  L_\alpha^{j-k} L_\beta^{2-j-l}   \cdot (  V_{kl})^{\alpha \beta;j} \,\bar{\mathbf 1}_\alpha\otimes \bar{\mathbf 1}_\beta 
 \end{aligned}
\end{align}
such that
\begin{align*} \textstyle
\frac{Y}{\ft^\n}\! \cdot \! (  V_{kl})^{\alpha \beta;j}  \in    \, 
 \mathbb Q [Y]
,\qquad 
\frac{Y}{\ft^\n}\! \cdot \! (  V_{kl})^{\alpha b}   \and  \frac{Y}{\ft^\n}\! \cdot \! (  V_{kl})^{ab}   \in  \,    \mathbb Q[A,B,B_2,B_3,Y]
\end{align*}  
are indepedent of $\alpha,\beta$. \black Further, the edge contribution $V(z,w)$ is homogeneous of degree $2$.
\end{lemma}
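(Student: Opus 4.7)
The plan is to reduce the lemma entirely to Lemma~\ref{keylem} (the polynomiality of the entries of $R$) together with the symplectic identity for $R$, and then to track the degree balance.

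First, I would observe that the numerator
\[
N(z,w):=\sum_{\alpha} e_\alpha\otimes e^\alpha-R(z)^{-1}e_\alpha\otimes R(w)^{-1}e^\alpha
\]
vanishes on $z+w=0$. Indeed, the symplectic property recorded just after Lemma~\ref{Define-R} says $R(z)^{-1}$ is symplectic, which translated to the basis $\{e_\alpha\},\{e^\alpha\}$ gives exactly $\sum_\alpha R(-w)^{-1}e_\alpha\otimes R(w)^{-1}e^\alpha=\sum_\alpha e_\alpha\otimes e^\alpha$. Consequently $V(z,w)=N(z,w)/(z+w)\in(\sH\otimes\sH)\otimes\aA[\![z,w]\!]$ is polynomial in $z,w$ and
\[
V_{k\ell}=\sum_{i+j=k+\ell+1}(-1)^{j-\ell-1}\,\bigl([R^{-1}]_i\otimes[R^{-1}]_j\bigr)\!\Bigl(\!-\!\!\sum_\alpha e_\alpha\otimes e^\alpha\Bigr),
\]
where $R^{-1}(z)=\sum_i [R^{-1}]_i z^i$; this is what I would use to extract $V_{k\ell}$.

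Next, I would plug in the decomposition $\sH\uev=H\sta(Q)\oplus\bigoplus_\alpha H\sta(\ffp_\alp)$ and express each tensor component of $V_{k\ell}$ using the bases $\{\tp_a\},\{\bar{\mathbf 1}_\alpha\}$ together with their duals $\{\tp^a\},\{\bar{\mathbf 1}^\alpha\}$. Writing $N(z,w)=-N_{QQ}-N_{Q\npt}-N_{\npt Q}-N_{\npt\npt}$ according to how $R(z)^{-1}e_\alpha$ and $R(w)^{-1}e^\alpha$ split, each piece of $V_{k\ell}$ becomes a finite sum of products of two entries of the form $(R_i)_j{}^b$ or $(R_i)_j{}^\alpha$ defined in \S6.2. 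By Lemma~\ref{keylem}, $\tfrac{Y}{\ft^\n}(R_i)_j{}^b$ lies in $\QQ[A,B,B_2,B_3,Y]$ and $(R_i)_j{}^\alpha$ lies in $\QQ[Y]$ and is independent of $\alpha$, so each of the coefficients $(V_{k\ell})^{ab},(V_{k\ell})^{\alpha b},(V_{k\ell})^{\alpha\beta;j}$ inherits the asserted polynomiality; the $\alpha,\beta$-independence in the first two cases is immediate from the $\alpha$-independence in Lemma~\ref{keylem}, and for the mixed type it follows because the $\alpha$-dependence of $R(z)^{-1}e_\alpha|_{\ffp_\alp}$ is absorbed entirely into the powers of $L_\alpha$ that I factor out.

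For the homogeneity claim, I would assign weights $\deg t=\deg z=\deg w=\deg L_\alp=1$, and $\deg H=\deg\mathbf 1_\alp=0$, so that $\deg\tp_a=0$, $\deg\tp^a=\n$ (via the factor $-\ft^\n/5$), $\deg\bar{\mathbf 1}_\alp=-\tfrac{\n+3}{2}$, $\deg\bar{\mathbf 1}^\alp=\tfrac{\n+3}{2}$. A direct check using the structure of $R(z)$ coming from \eqref{DefineR} (the $\Delta^{\ffp_\alp}$ factor is visibly weighted by $L_\alp$ in $z/L_\alp$, and the $R\loc$ factor is built from $S\msp$ and $S\loc$, which are homogeneous) shows that $N(z,w)$ is weight $2$; the division by $z+w$ drops one weight, but one checks the remaining total weight of $V_{k\ell} z^k w^\ell$ balances to $2$ precisely when one inserts the powers of $L_\alp$ displayed in \eqref{vkl}. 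The remaining polynomiality statements then follow because all entries in Lemma~\ref{keylem} are weight $0$ after the $L_\alp^{\bullet}$ prefactor is stripped off.

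The step I expect to be the main obstacle is the careful degree bookkeeping, in particular checking that the exponents of $L_\alpha$ in \eqref{vkl} are exactly $2-b-k-\ell$, $j-k$, and $2-j-\ell$, so that the residual coefficients lose all their $\alpha$-dependence and fall inside the stated polynomial rings; this requires keeping track of the distinction between $(R_i)_j{}^\alpha$ (which is weight $0$) and its $\alpha$-rescalings, and of the equivariant normalizations of the dual bases $\tp^a$ and $\bar{\mathbf 1}^\alpha$ simultaneously. Everything else is a formal consequence of Lemma~\ref{keylem} and the symplectic identity.
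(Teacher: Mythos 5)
Your proposal follows the same route as the paper: the paper's (very terse) proof likewise writes $V_{kl}$ as a linear combination of $\sum_j R_i^*\tp_j\otimes R_{k+l+1-i}^*\tp^j$ over $i=0,\dots,k+l$ and then invokes Lemma \ref{keylem}, with the symplectic identity implicitly guaranteeing that $V(z,w)$ is a genuine power series. Your version simply supplies the details (the vanishing of the numerator on $z+w=0$, the component-wise decomposition into $Q$ and $\ffp_\alpha$ blocks, and the $L_\alpha$-weight bookkeeping for the homogeneity claim) that the paper leaves to the reader, so it is correct and essentially identical in strategy.
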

\begin{proof}
Recall that we have the relations
$$
V(z,w) = \frac{1}{z+w}
 \Big(  \sum_{j=0}^{\n+2}  \tp_j  \otimes   \tp^j - R(z)^{-1}\tp_j  \otimes R(w)^{-1} \tp^j    \Big)_{i=0}^{k+l}.
$$
Hence for each $k,l$, $V_{kl} $ is a linear combination of
$$
 \Big\{  \sum_{j=0}^{\n+2}  R_i^*\tp_j  \otimes   R_{k+l+1-i}^* \tp^j    \Big\}_{i=0}^{k+l}.
$$
Then the lemma follows from Lemma \ref{keylem}. 
\end{proof}

\subsection{Proof of Theorem \ref{thm3}}   To simplify the computation, we pick $t$ such that $t^\n=-1$. This makes $q'=q$, 
By definition, it is clear that for
$2g-2+n>0$
\beq \label{Pgnrec}
P_{g,n+1} =  \Big(q\frac{d}{dq} +  ( g-1) (2B+1-Y ) -n \,A \Big) P_{g,n}.
\eeq
Hence if $P_{g,n} \in \sR$, so does $P_{g,n+1}$.

By Theorem \ref{R-action-thm},  for any $[0,1]$-theory, it is equal to a summation over stable graphs. We define the leading graph of the stable graphs in $G_{g,0}^\n$ to be the single genus $g$ vertex labeled by $Q$ (quintic fixed loci).
We now prove the theorem by induction:

First  by using \eqref{3point} and genus $1$ mirror theorem \cite{Zi} \footnote{For genus $1$ case, Zinger's  theorem can be recovered by considering the genus $1$ $\nmsp$-theory with one $\phi_1$-insertion (see \cite{NMSP3}).}
$$ \textstyle
P_{0,3} = 1 \and  P_{1,1} = - \frac{1}{2}A - \frac{31}{3} B + \frac{1}{12}Y - \frac{13}{6}
$$
are  both in $\sR$.  Thus by \eqref{Pgnrec} 
$$P_{0,n} (n\geq 3), \ \text{and}\  \ P_{1,n} (n\geq 1)  \  \in \ \sR.$$ 
We next assume $g\geq 2$. 
Assume for any genus $h<g$, and any $2h-2+n>0$ one has $P_{h,n} \in \sR$.   
We consider the normalized $\nmsp$-$[0,1]$ potential $Y^{g-1}\cdot F^{[0,1]}_g$.  By Theorem \ref{thm1},
$$
Y^{g-1}\cdot F^{[0,1]}_g \in \mathbb Q[Y]_{g-1}.
$$ 
On the other hand, we have
the graph sum formula by Theorem \ref{R-action-thm}. The leading stable graph of the leading bipartite graph is a single genus $g$ vertex labelled by $0$, with contribution $\frac{Y^{g-1}}{I_0^{2g-2}} F_g =P_g$.
For the rest of the graphs, via the relation
$$
\sum_v (g_v-1) + E = g-1
$$
we put the factor $Y^{g-1}$ into vertices and edges. Together with Lemma \ref{keylem} and \ref{keylem2}, the contribution of each non-leading graph is given by the followings:

$\bullet$ At each edge, the contribution is of the form (by \eqref{vkl})
\begin{align}\label{edge-summand}
Y \cdot (  V_{kl})^{a b} ,\quad Y \cdot (  V_{kl})^{\alpha b}  \quad  \text{  or  }  \quad   Y \cdot (  V_{kl})^{\alpha \beta;j}
\end{align}
which lies in the ring $\sR$. We pick (any) one of them from \eqref{vkl}, and vary the hour $\alpha=1,\cdots,\n$ of each level $1$ vertex. The variation provides a multiplicative factor below, since  \eqref{edge-summand} are independent of hours (c.f. Lemma \ref{keylem2}). 

$\bullet$ At each quintic vertex $v$, the contribution is
\begin{align*}
P_{g_v,\vec a, \vec k} :=   Y^{g_v-1} \int_{\M_{g_v,n_v}}  \left[  \tp_{a_1} \bar \psi_1^{k_1} ,\cdots, \tp_{a_{n_v}} \bar \psi_{n_v}^{k_{n_v}} \right]_{g_v,n_v}^{Q,T}.
\end{align*}
Here we recall the translated correlators are defined by the equation in Definition \ref{locclass}. 
It vanishes unless  $\sum a_i+k_i = n_v$. By using string and dilation equations once and again, this correlator will reduce to $P_{g_v,m} $  multiplied by a constant \footnote{For $g=1, \vec a=0^{n_v}$ case, the correlator will reduce to 
$$
\left< \bar\psi \right>_{1,1}^Q =  \frac{\chi}{24}\quad  \text{  with   }   \quad  \chi = -200.
$$
}
. Since $g_v<g$, by induction hypothesis we have $
P_{g_v,\vec a, \vec k}  \in \sR. $

$\bullet$ At each $\ffp_\alpha$ vertex, the contribution is
\begin{align*}
 & Y^{g_v-1}  \int_{\M_{g_v,n_v}}  \left[ L_\alpha^{j_1-k_1}  \bar \psi_1^{k_1} ,\cdots,  L_\alpha^{j_{n_v}-k_{n_v}}  \bar \psi_{n_v}^{k_{n_v}} \right]_{g_v,n_v}^{\ffp_\alpha,T}\\
\qquad\qquad = \,& \sum_m\frac{L^{\frac{3}{2} (2 g_v-2)}}{m!} \left< L_\alpha^{j_1-k_1}  \bar \psi_1^{k_1} ,\cdots,  L_\alpha^{j_{n_v}-k_{n_v}}  \bar \psi_{n_v}^{k_{n_v}} , \prod_{s=1}^m \oT_\alpha(\bar \psi_{n_v+s})  \right>_{g_v,n_v+m}.  \nonumber
\end{align*}

We claim that, after summing over $\alpha=1, \cdots,\n$, the contribution lies in $\sR$. 
\begin{enumerate}
\item For $s=1,\cdots,m$, if each monomial in 
$$ \textstyle
 \oT_\alpha(\bar\psi_{n_v+s})={\sum_{l_s} (\oT_\alpha)_{l_s} \bar \psi_{n_v+s}^{ l_s+1}}$$ contribute $(\oT_\alpha)_{l_s} \bar \psi^{ l_s+1}$ ,  the correlator is non-zero only if
$$  \textstyle
\sum_{i=1}^{n_v} k_i  +\sum_{s=1}^m  (l_s+1) = 3 g_v-3+ n_v+m.
$$
\item
Together with the fact $L_\alpha^{ l} \cdot (\oT_\alpha)_l = (R_l)_{j \alpha} \in  \mathbb Q[Y] $ (for all $l$), we see
that the total factor involving $L_\alpha$ is $L_\alpha^{(\sum_i j_i) -n_v}$.
This makes $\sum_{\alp=1}^\n L_\alpha^{(\sum_i j_i) -n_v}$ a multiplicative factor of the contribution.
 Since   $\n$ may be chosen to be a prime, we do such assumption in the beginning.  Then this multiplicative factor is non-zero only if 
$$ \textstyle \sum_{i=1}^{n_v} j_i = n_v \mod \n. $$
 The total factors in the graph $\Gamma$ becomes
$$ \textstyle
\prod_{v\in\Gamma} L_{\alpha(v)}^{\sum_{i=1}^{n_v} j_i -n_v}  = 
\prod_{v\in\Gamma} Y^{(\sum_{i=1}^{n_v} j_i -n_v)/\n}  = 1.
$$
Here we have used that for each edge, if at one end it contributes $L_{\alpha}^{j}$ then in the other end it  contributes $L_{\beta}^{2-j}$.
\end{enumerate}

We conclude that the summation of the rest graph contributions lies in the ring $\sR$, thus
$$
P_{g} \in \sR.
$$
Finally, by \eqref{Pgnrec}
we have $
P_{g,n} \in \sR$ {for any} $n>0$.
This proves Theorem \ref{thm3}.
\vspace{0.5cm}
\subsection{Proof of Theorem \ref{thm4}}
Notice that the quintic $I$-functions $\{I_i\}_{i=0}^3$ (see \eqref{Q-Ifunc}) are analytic functions in the disk $\{|q|<\frac{1}{5^5}\}$ (c.f. \cite[(3,14)]{CdGP}). So are the mirror map $I_1/I_0$ and the generators $A_k$ and $B_k$. Further, the map
$$
q \rightarrow Q(q) = q\cdot e^{I_1(q)/I_0(q)}
$$
is an analytic homeomorphism between neighborhoods of zeros. 
Hence, we have any element in $\sR$ is an analytic function near $Q=0$ as a function of $Q$.
 
\vspace{1cm}

\section{Proof of a key property} \label{keylemRpt}
We consider the $R(z)^*1$ restricted at the fixed loci $\npt$. We will prove
\begin{equation} \label{polyofRpt1}
{ (R_k)_{0}}^{ \alpha} = L^{\frac{\n+3}{2}}L_\alpha ^{k}\cdot   (R_k \mathbf 1^\alpha , \mathbf 1)^\tw \quad  \in  \quad  \mathbb Q[Y]_{k}
\end{equation}
in this section. The idea is to use the Picard-Fuchs(PF) equation to solve $R_k$ recursively and to use 
Givental's oscillator integral to determine their initial values.

\subsection{Applying Picard-Fuchs equation}
The first columns ${ (R_k)_{0}}^{ \alpha}$ can be solved from the PF equation for $\nmsp$ I-function.  It is clear $I\msp$ (see \eqref{nmspI}) satisfies the PF equation:
\begin{equation} \label{PFofMSP} 
\bigg(  D_p ^5 \prod_{\alpha=1}^\n(D_p+\ft_\alpha)  -q  \prod_{k=1}^5(5D_p+kz)   \bigg) I\msp(q,z) = 0,
\end{equation}
where $D:= q\frac{d}{dq}$ and $D_p:=z D+p$.  By \eqref{DefineR}, \eqref{twq'} and \eqref{tauloc} we also have\footnote{ in this section we always brief $\tau_\alp=\tau_\alp(q')$; \black}
$$
  \Delta^{\pt_\alpha} (z)^* J\msp(0, z)|_{\pt_\alpha}= z\Delta^{\pt_\alpha} (z)^*  S\msp(z)^{*} \mathbf 1|_{\pt_\alpha} = z  e^{\tau_{\alpha}/z} R^*(z)  \mathbf 1|_{{\pt_\alpha}}.
$$
Via the   mirror theorem (Theorem~\ref{mirror}) we obtain
\begin{equation} \label{asymptofI_1}
\Delta^{\pt_\alpha} (z)^{*}I\msp(q,z)  |_{{\pt_\alpha}} =  e^{\tau_{\alpha}/z}z  R^*(z)  \mathbf 1|_{{\pt_\alpha}}.
\end{equation}
The LHS of equation \eqref{asymptofI_1} satisfies the PF equation \eqref{PFofMSP} as well since $\Delta^{\pt_\alpha}(z)$ is a constant in $q$ . Hence we see {$R^*(z) \mathbf 1|_{\pt\lalp}$}  satisfies
\begin{equation}  \label{PFforRpt} 
\Big( D_{{L_\alpha}}^5   \big( D_{{L_\alpha}}^\n -\ft^\n \big)  -q \prod_{k=1}^5  (5D_{{L_\alpha}}+kz)   \Big) R^*(z) \mathbf 1|_{\pt_\alpha}= 0,
\end{equation} 
where $D_{{L_\alpha}} := zD+    {L_\alpha}$ and $ {L_\alpha}:=  -t_\alpha+   q\frac{d}{dq} \tau_\alpha$.
Note that  for any $k>0$ 
\begin{align} \label{DLk}
 D_{L_\alpha}^k \!=& \, L_\alpha^k \cdot \Big[ 1 + \frac{z}{L_\alpha}  \cdot \!\Big(k\,  D+ \frac{k(k-1)}{2} \frac{D { L_\alpha}}{L_\alpha}   \Big)+ \cdots \Big]
\end{align}
By solving this equation we obtain both ${L_\alpha}$ and $R^*(z)  \mathbf 1|_\pt$.

{We solve them recursively as follows:}
 First we look at the coefficient of $z^0$ of \eqref{PFforRpt}, which gives us
$$ 
L_\alpha^5 (L_\alpha^\n-t^\n) -q \cdot 5^5 L_\alpha^5 =0.
$$
This equation has two types of solutions: $L_\alpha=0$ with multiplicity $5$ and  $L_\alpha=\xi_\n^\alpha \, \ft ( 1+5^5 q/\ft^\n)^{1/\n} $ with multiplicity $1$.
The first solution corresponds to the matrix $S^{Q}$ which is not diagonalizable, while the second solution corresponds to the diagonalizable part $S^{\pt_\alp}=e^{\tau_\alp/z}$,  which is what we need. 
 We then obtain
\begin{align*} 
\tau_{\alpha}  = \tau_{\alpha} (q')  = \int_0^{q'}  ( L_\alpha +  \ft_\alpha )  \frac{dq}{q}
\end{align*} 
where we have used the initial condition $\tau_\alpha |_{q=0}=0$.

Next we look at the coefficient of $z^1$ of \eqref{PFforRpt}, and use the value of $L_\alp$ we compute
\begin{equation*}
 \textstyle
L_\alpha=\zeta_\n^\alpha \,\ft( 1+5^5 q/\ft^\n )^{1/\n} , \and  D L_\alpha =   {(L_\alpha^\n-t^\n)}/\n {L_\alpha^{\n-1}}.
\end{equation*} 
Recall $R(z)$ is a symplectic transformation such that 
$$
R^*(z) : = R^*_0+R^*_1 z+R^*_2 z^2+\cdots \in \Hom(\sH ,  \sH_Q\oplus \sH_{\npt} )[\![z,z^{-1}]\!].
$$  
The coefficient of $z^1$ of the equation \eqref{PFforRpt} becomes 
$$
\frac{3+\n}{2} 5^5  q\,  R_0^* \mathbf 1 |_{\pt_\alpha}+ \n ( 5^5 q+\ft^\n)  \, D(R_0^* \mathbf 1 |_{\pt_\alpha}) =0.
$$
Solving it we obtain
$$
R_0^* \mathbf 1 |_{\pt_\alpha} =   L ^{-\frac{\n+3}{2}},
$$
where $L=   ( 1+ 5^5 q/\ft^\n)^{1/\n}$. Here we have used the initial condition $R_0^* \mathbf 1 |_{\pt_\alpha, q=0}  = [z^0] \Delta^{\pt_\alpha}(z) =1$.

Then we look at the coefficient of $z^2$ of this equation, and solve $R_1 \mathbf 1 |_{\pt_\alp}$ up to a constant. Repeating the steps, we can solve $R_k^* \mathbf 1 |_{\pt_\alp}$ for any $k$. Note that at each step we have one constant to fix. The constant can be fixed by the $\Delta^{\pt_\alpha}$-matrix. 

\medskip
By using the above idea, we now prove   \eqref{polyofRpt1}.
We write
$$ \textstyle
R(z)^* \mathbf 1 |_{\pt_\alpha}   =    L ^{-\frac{\n+3}{2}} \cdot \big(1+\frac{r_1}{L_\alpha}  {z} +\frac{r_2}{L_\alpha^2}  {z^2}  +\cdots\big).
$$
 The  equation \eqref{PFforRpt} becomes of the form
\begin{equation} \label{PFR}
L_\alpha^{\n+5}\cdot \PF  \Big(  L ^{-\frac{\n+3}{2}} \cdot \big(1+\frac{r_1}{L_\alpha}  {z} +\frac{r_2}{L_\alpha^2}  {z^2}  +\cdots\big)  \Big)=0,
\end{equation}
where we denote the operator on LHS of  \eqref{PFforRpt} by $L_\alpha^{\n+5} \cdot \PF$ . Recall $q'=-q/\ft^\n$ and let
$$X := 1-Y= {(-5^5 q')}/{(1-5^5 q')} =  1- L^{-\n}   .$$
By the explicit formula of \eqref{PFforRpt}, we see the operator 
\begin{align*} 
  \ \textstyle  \PF
  =& \ \textstyle\frac{1}{L_\alpha^{5}}  \big(\frac{1}{L_\alpha^{\n}}  D_{{L_\alpha}}^\n +1-X \big)
D_{{L_\alpha}}^5    +\frac{X}{5^5 L_\alpha^5} \prod_{k=1}^5  (5D_{{L_\alpha}}+kz)  
\end{align*}
 is of the  form $
 \PF =  \sum_{k=1}^{\n+5}  \frac{z^k}{ L_\alpha^{k}} \cdot \PF_k$ ,
with 
\beq  \label{PFk} \textstyle
\PF_k = {\small \text{$\sum_{j=0}^k$}} f_{k,j}(X)D^{k-j} \and f_{k,j} \in \mathbb Q[X]_j \quad \forall k,j.
\eeq
 Here we have used  the following observations:
\begin{enumerate}
\item acting on $L_\alpha$ or $X$, the operator $D$ increases the $X$-degree by $1$:
\beq \label{DLDX}
D L_\alpha  =  L_\alpha \cdot X /\n \and D X = X(1-X) ;
\eeq
\item in each monomial of $\PF_n$, the operator $D_{L_\alpha}$ appear at most $n$-times;
\item for any $n$, by using \eqref{DLDX} the operator $L_\alpha^{-n} D_{L_\alpha}^n$ has the following form
\beq \label{ckjdegree} \textstyle
L_\alpha^{-n} D_{L_\alpha}^n =\sum_{k=0}^n \ z^{k}L_\alpha^{-k} \cdot   \sum_{j=0}^k     c_{k,j}(X) D^{k-j}
\eeq
where $c_{k,0} =\binom{n}{k}$ and for $j>0$, $c_{k,j} \in X \mathbb Q[X]_{j-1}$ ( See   \eqref{DLk} for the leading terms).
\end{enumerate}
For example, we have 
\begin{align*}
& {\small \text{$  \PF_1 =  \n\,D+\frac{(\n+3)X}{ 2  },
$}}
\qquad {\small \text{$
\PF_2 = \frac{\n (\n+9)}{2} D^2 + \frac{(\n^2+12\n+23)X}{2 } D+\cdots.$}}
\end{align*}
Now let $\widetilde \PF:= L ^{\frac{\n+3}{2}} \, \PF (L ^{-\frac{\n+3}{2}}) = \sum_{k=1}^{\n+5} z^k \cdot \widetilde \PF_i$, with $\widetilde \PF_1 = \n D$. Then  \eqref{PFR} becomes
\begin{align*}
& \textstyle \widetilde \PF_1 \big(\frac{r_1}{L_\alpha}\big) +  \frac{1}{L_\alpha}\!\cdot \widetilde \PF_2 1 =0, \\
& \textstyle \widetilde \PF_1  \big(\frac{r_2}{ L_\alpha^2}\big)  +  \frac{1}{L_\alpha}\!\cdot \widetilde \PF_2 \big(\frac{r_1}{L_\alpha}\big)  +  \frac{1}{L_\alpha^2}\!\cdot \widetilde \PF_3 1  =0, \\
&\cdots  \  \cdots. 
\end{align*}
By the shape of \eqref{PFk}, we know that any transformation 
$$D \mapsto D+a \cdot X,\qquad  a\in \mathbb Q$$ will not change the degree estimate of the coefficients $f_{k,j}$. Hence the operator $\widetilde \PF_1$ has exactly the same properties as \eqref{PFk}. 
We observe that the coefficient of $z^{k+1}$ in   \eqref{PFR} can be always written in the form
$$ \textstyle
  \n\cdot D \big( \frac{r_k}{L_\alpha^k} \big) =\frac{1}{L_\alpha^k}  \mathcal P_k(r_0,r_1,\cdots,r_{k-1})
$$
where $r_0=1$ and $\mathcal P_k$ are differential polynomials in $r_i$ and $X$: \footnote{We can see if we let
$\deg X=1, \quad \deg r_i = i, \quad \deg D = 1$, 
then we have 
$\textstyle
\deg \mathcal P_{k} = \deg \widetilde \PF_{k+1} = \deg   \PF_{k+1}  = k+1.$}
$$ \textstyle
\mathcal P_k (r_0,\cdots,r_{k-1}) = -\sum_{j=1}^{k-1}  \widetilde \PF_{k+1-j}\big|_{D \mapsto D-\frac{j}{\n} X} (r_{j}).
$$
Hence equation \eqref{PFR} can be solved by induction: Suppose for any $i<k$, $r_i$ is a polynomial in $X$ of degree $i$, and does not depend on $\alpha$. Then $\mathcal P_k (r_0,\cdots,r_{k-1}) $ is a polynomial  degree $k+1$. Further by the property of $c_{k,j}$ in \eqref{ckjdegree}, and by the fact $Df(X)$ is divided by $X$ for any polynomial $f(X)$, it is divided by $X$.
By using
$DL_\alpha = \frac{1}{\n} L_\alpha \cdot X $  again one has
$$
\frac{r_k}{  {L_\alpha^k} }  = \int   \frac{\n}{L_\alpha^k}\mathcal P_k(r_1,\cdots,r_{k-1}) \,  \frac{dL_\alpha}{ X\cdot L_\alpha}.
$$
Notice $X^{-1}\mathcal P_k \in \mathbb Q[X]_k$ and $X=1+L_{\alpha}^{-\n}$. Thus the integrand is a polynomial of $L_\alpha^{-1}$. Further, we obtain $r_k \in Q[X]_{k}$ and independent on $\alpha$. By initial data $r_0=1$ and the following initial condition used in the integration
\beq \label{initialcond} 
\text{``  $ {r_k} \cdot{  {L_\alpha^{-k}} } $ vanishes when $L_\alpha^{-1} \rightarrow 0$ "} .
\eeq
This completes the induction. We will prove \eqref{initialcond} in Section \ref{asymptI}.

\def\fS{\mathfrak S}
\subsection{Asymptotic expansion} \label{asymptI}

Following \cite{G96,CG}, we introduce the Landau-Ginzburg potential $W: (\mathbb C^*)^{n+1} \rightarrow \mathbb C$ for the  equivariant GW theory of Fano hypersurface $X_m\subset \mathbb P^n$, or equivalently, $\mathcal O(m)$-twisted GW theory of $\mathbb P^{n}$ with $m<n+1$:
$$ \textstyle
W(x_0,\cdots,x_n):=\sum_{i=0}^{n}  (x_i - \lambda_i \ln x_i)  +  y,\quad  y^m q =  \prod_{i=0}^{n} x_i .
$$
We consider the critical points of $W$
$$
\Big\{ \mathbf x_\alpha :  \frac{\partial W}{\partial x_i} (\mathbf x_\alpha)=0 \Big\} 
$$
There are $n+1$-critical points for generic equivariant parameters $\{\lambda_i\}$:
\beq \label{critpt}
(x_i)_\alpha = L_\alpha +\lambda_i,\qquad y_\alpha = -m L_\alpha
\eeq
with critical value $u_\alpha = W(\mathbf x_\alpha)$,
where $\{L_\alpha\}$ are $n+1$-solutions \footnote{At the critical point, the condition $ x_i\partial_{x_i} W = 0$ implies  $ x_i - \lambda_i =  - y /m$ for all $i$.  Hence by using $\prod_i( -\frac{y}{m} + \lambda_i) = y^m q$ we solve $y_\alpha$ and then $(x_i)_\alpha$.  } of $\prod_i(L+\lambda_i)=(-mL)^m q$. 
Near each critical point $\mathbf x_\alpha$ we have a \emph{Lefschetz thimble $\gamma_\alpha$}, which is a real $n$-dimensional cycle in $\mathbb C^n$ such that restricted to the Milnor fibre $W^{-1}(u)$ it is the vanishing cycle.

We consider the oscillatory integral
$$
I_\alpha(q,z):=\int_{\gamma_\alpha \subset (\mathbb C^*)^{n}}  e^{W/z}  \frac{d x_0 \wedge \cdots \wedge d x_{n}}{x_0 \cdots x_{n}}.
$$
We assume $z<0$ and study its asymptotic expansion as $z\rightarrow 0^-$, from the negative real axis.
By the result of \cite{G96,CCIT2}, the asymptotic expansion of $I_\alpha$ coincides with\footnote{Here the critical value $u_\alpha=\int (L_\alpha+t_\alpha) \frac{dq}{q}$ differ from the $\tau_\alpha$ defined in \eqref{tauloc0} by 
some constant.}
$$
I_\alpha(q,z) \asymp
 e^{u_\alpha /z} (-2\pi z)^{\frac{n+1}{2}}R^\lambda(z)^* \mathbf 1 |_{\pt_\alpha} .
$$
Here let $S^\lambda(z)$ be the $S$-matrix of the equivalently $\mathcal O(m)$-twisted GW theory of $\mathbb P^{n}$, and  $ R^\lambda(z)$ is defined via the Birkhoff factorization
$$
\Delta^\lambda(z) S^\lambda(z)  = R^\lambda(z) e^{u/z}
$$
with $\Delta^\lambda(z) = \diag \big\{\exp  \sum_{k> 0 }     \frac{B_{2k}}{2k(2k-1)}   \big( \frac{1}{(-m \lambda_\alpha)^{2k-1}}
 +\sum_{\beta\neq \alpha}\frac{1}{(\lambda_\alpha-\lambda_\beta)^{2k-1} } \big) z^{2k-1} \big\}_{\alpha=0}^n$.

\def \cQ{\mathcal Q}
We now use the saddle point method to compute the asymptotic expansion. We consider the Taylor expansion of $W$ near critical point $\mathbf x_\alpha$ :
$$
W = u_\alpha + \frac{1}{2} \cQ(\xi)+\sum_{k\geq 3} \frac{1}{k!}\sum_{i_1,\cdots,i_k} \partial_{x_{i_1}}\cdots \partial_{x_{i_k}}W (\mathbf x_\alpha) \xi_{i_1} \cdots \xi_{i_k} 
$$
where $\xi = \mathbf x- \mathbf x_\alpha$ is the local coordinate and
 $$\textstyle
  \cQ(\xi)=\sum_{i,j} \partial_{x_i}  \partial_{x_j}W (\mathbf x_\alpha) \xi_i \xi_j $$
 is a non-degenerate quadratic form.

 For $I=(i_1,\cdots,i_k)$, we denote by
$$
\partial_{x_I}:= \partial_{x_{i_1}}\cdots \partial_{x_{i_k}}  ,\qquad  \xi_I:= \xi_{i_1} \cdots \xi_{i_k} 
$$
Then the oscillatory integral has the following form (note $z=-s<0$ is real) 
\begin{align*}
 I_\alpha(q,z) &\, =  e^{u_\alpha /z} 
\int_{\gamma_\alpha}  e^{\sum_{k\geq 3} \sum_{I=(i_1,\cdots,i_k)}   \frac{1}{z} \black \partial_{I}W(\mathbf x_\alpha) \xi_I} e^{\frac{1}{2}\cQ(\xi)   \frac{1}{z} \black }  d\omega
\\
&\, \asymp e^{u_\alpha /z}  \Big( {\textstyle \prod_i (x_i)^{-1}_\alpha \int_{\mathbb R^n}   e^{\frac{1}{2z}\cQ(\xi)}  d\xi_1\cdots d\xi_n }\Big) \cdot \Big( 1+  \sum_{l>0} f_l \cdot (-z)^l\Big)
\end{align*}
 where $d\omega :={\textstyle \frac{d \xi_0 \cdots d \xi_n}{ (\xi_0+(x_0)_\alpha)\cdots (\xi_n+(x_n)_\alpha)}} $.
We prove
\begin{proposition} \label{initialconditionofR}
For $l>0$, $f_l$ are rational functions of $L_\alpha$. Further
\beq \label{limitcond}
\lim_{L_\alpha^{-1} \rightarrow 0} f_l(L_\alpha)  =0.
\eeq
\end{proposition}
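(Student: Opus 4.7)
The plan is to extract both claims from the Feynman-diagram expansion of the oscillatory integral. Expanding
\[
W - u_\alpha \;=\; \tfrac{1}{2}\mathcal{Q}(\xi) \;+\; \sum_{k \geq 3} \tfrac{1}{k!}\sum_I \partial_I W(\mathbf{x}_\alpha)\,\xi_I,
\]
together with the geometric-series expansion of the measure
\[
d\omega \;=\; \Bigl(\textstyle\prod_i X_i\Bigr)^{-1}\prod_i \sum_{n_i \geq 0} (-\xi_i/X_i)^{n_i}\,d\xi_i, \qquad X_i := (x_i)_\alpha,
\]
and applying Wick's theorem to the Gaussian integral, I would obtain, for each $l \geq 0$, a sum over decorated Feynman diagrams for the coefficient of $(-z)^l$. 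Each diagram has \textbf{$W$-vertices} of valence $k \geq 3$ weighted by $\partial^k W(\mathbf{x}_\alpha)$, \textbf{measure-vertices} at site $i$ of valence $k \geq 1$ weighted by $(-1/X_i)^k$, and edges carrying entries of the inverse Hessian $H^{-1}$, where $H_{ij} = \partial_{x_i}\partial_{x_j}W(\mathbf{x}_\alpha)$.

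Rationality of $f_l$ would then be immediate: differentiating the defining relation $y^m q = \prod x_i$ yields $\partial_{x_i} y = y/(m x_i)$, and iterating shows every $\partial_I W(\mathbf{x}_\alpha)$ is a rational function of $L_\alpha$ (with the $\lambda_i$ treated as fixed generic parameters); since $H$ is the sum of a diagonal matrix and a rank-one piece, $H^{-1}$ likewise has rational entries. Each diagram therefore contributes a rational function of $L_\alpha$, so $f_l$, a finite such sum, is rational.

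For the vanishing as $L_\alpha^{-1} \to 0$, I would carry out a power-count in $L_\alpha$. A direct computation shows $\partial^k W(\mathbf{x}_\alpha) = O(L_\alpha^{1-k})$ for all $k \geq 2$, so that $H = L_\alpha^{-1}\widetilde H + o(L_\alpha^{-1})$ with $\widetilde H$ invertible under the Fano hypothesis $m < n+1$; hence $(H^{-1})_{ij} = O(L_\alpha)$, while $X_i^{-1} = O(L_\alpha^{-1})$. For a diagram with $V_W$ $W$-vertices of valences $k_1,\ldots,k_{V_W}$, measure-vertices of total valence $\sum_j l_j$, and $E$ edges, the combined scaling, using the half-edge count $\sum_i k_i + \sum_j l_j = 2E$, is
\[
\prod_i L_\alpha^{1-k_i}\cdot\prod_j L_\alpha^{-l_j}\cdot L_\alpha^E \;=\; L_\alpha^{V_W - E}.
\]
Standard power-counting gives $E - V_W = l$ at order $z^l$, so each diagram is $O(L_\alpha^{-l})$, which tends to $0$ for $l > 0$. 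The main obstacle is to ensure that only finitely many diagrams contribute at each order $l$, so that this diagram-wise estimate passes to the full sum; this follows from the bound $3(E - l) \leq \sum_i k_i \leq 2E$ forced by $k_i \geq 3$, which gives $E \leq 3l$ and hence a bound on all vertex valences, leaving only finitely many diagram topologies. Combining, $f_l = O(L_\alpha^{-l}) \to 0$ as $L_\alpha \to \infty$ for $l > 0$, completing the argument.
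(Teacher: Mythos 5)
Your proof is correct and follows essentially the same route as the paper's: a stationary-phase/Wick expansion combined with the $L_\alpha$-power counts $\partial^k W(\mathbf x_\alpha)=O(L_\alpha^{1-k})$, $(B^{-1})_{ij}=O(L_\alpha)$ (via the Fano condition $m<n+1$), and $(x_i)_\alpha^{-1}=O(L_\alpha^{-1})$, yielding $O(L_\alpha^{-l})$ for each contribution at order $z^l$. Your Feynman-diagram bookkeeping (the identity $E-V_W=l$ and the bound $E\le 3l$) is just a more systematic packaging of the paper's term-by-term estimate via its Gaussian-moment lemma, with the minor merit of treating the expansion of the measure $d\omega$ and the finiteness of the sum explicitly.
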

\begin{proof} It suffices to prove that for any $|I_j|\geq 3$
\begin{align}\label{goal}
{\small\text{$\Delta^{-1}\partial_{x_{I_1}}\!\!W (\mathbf x_\alpha)\cdots \partial_{x_{I_r}}\!\!W (\mathbf x_\alpha)  \cdot \int_{\mathbb R^n}  \xi_{I_1}\cdots  \xi_{I_r} e^{\frac{1}{2z}\cQ(\xi)}  {\textstyle \frac{d \xi_0 \cdots d \xi_n}{ (\xi_0+(x_0)_\alpha)\cdots (\xi_n+(x_n)_\alpha)}} = O(L_\alpha^{-1}).$}}
\end{align}
where  $\Delta:= {\textstyle \prod_i (x_i)^{-1}_\alpha \int_{\mathbb R^n}   e^{\frac{1}{2z}\cQ(\xi)}  d\xi_1\cdots d\xi_n } $.
 By using $x_i\partial_{x_i} y = m^{-1} y$,
we have
$$ \textstyle
\frac{\partial^2 W}{\partial x_i^2} =  \frac{\lambda_i }{x_i^2} +\frac{(1-m) }{m^2 x_i^2} y,\quad
\frac{\partial^2 W}{\partial x_i \partial x_j} = \frac{y }{m^2 x_i x_j} 
\and
\frac{\partial^k W}{\partial x_{i_1}\cdots \partial x_{i_k}}  \in  \frac{1}{x_{i_1}\cdots x_{i_k}}\mathbb Q[y]_1
$$
 for $k\geq 3$.
At  the critical point, by \eqref{critpt} we see $\partial_{x_I} W (\mathbf x_\alpha)$ are rational function of $L_\alpha$ with the following property
\begin{align}\label{Wxalp}
\partial_{x_{i_1}}\cdots \partial_{x_{i_k}}W (\mathbf x_\alpha) = O(L_\alpha^{-k+1}) \qquad  \text{ when }  L_\alpha^{-1} \rightarrow 0
\end{align}
 By combining the estimate proved by the lemma below, one obtain the LHS of \eqref{goal} has leading term (lowest order of $L_{\alp}^{-1}$ expansion)  
 $$(-z)^{\frac{n+1+\sum_j |I_j|}{2}} L_\alp^{\frac{n+1+\sum_j |I_j|}{2} - \frac{n+1}{2} +\sum_j(1-|I_j|)}  = (-z)^{\frac{n+r+1}{2}?+l} L_\alp^{-l}  $$
where $l:=-\sum_j (1- \frac{|I_j|}{2})>0$. The Gaussian integral vanishes if $l$ is not an integer.
Hence this proves \eqref{goal}.
 \end{proof}

\begin{lemma} \label{gaussint}Write $k=k_0+\cdots+k_n$. Then as expansions near $L_{\alp}^{-1}=0$ we have 
\begin{align}\label{Gau} 
 &\int_{\mathbb R^n}  \xi_0^{k_0}\cdots  \xi_n^{k_n} e^{\frac{1}{2z}\cQ(\xi)} {d \xi_0 \cdots d \xi_n}  
\quad \in \quad  (-z)^{\frac{n+1+k}{2}}  L_\alpha^{\frac{n+1+k}{2} } \RR[\![(L_{\alp}^{-1})]\!]
\end{align}
when $k$ is even; while \eqref{Gau} vanishes if $k$ is odd.
 In case $k=0$, the \eqref{Gau} is of form 
 $  L_\alp^{\frac{n+1}{2}}(c+ O(L_\alp^{-1}))$ for some $c\neq  0$ independent of $L_\alp$.
\end{lemma}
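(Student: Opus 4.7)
The plan is to rescale the integration variable so that the $L_\alpha$-dependence of the quadratic form $\cQ$ is isolated, reducing the problem to a standard Gaussian integral against a fixed non-degenerate quadratic form together with an asymptotic expansion of corrections in powers of $L_\alpha^{-1}$.

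First, I would compute the Hessian at the critical point. Using $\partial_i y = y/(m x_i)$, a direct calculation gives $\partial_{x_i}\partial_{x_j}W = \delta_{ij}/x_i - L_\alpha/(m\,x_i x_j)$, and evaluating at $(x_i)_\alpha = L_\alpha + \lambda_i$ yields the expansion
\[
\cQ(\xi) \;=\; \sum_{j\ge 1} L_\alpha^{-j}\, \cQ_j(\xi),
\]
where each $\cQ_j$ is a quadratic form in $\xi$ whose coefficients are polynomial in the equivariant parameters $\lambda_i$. The leading piece $\cQ_1(\xi) = \xi^{T} M \xi$ has matrix $M = I - J/m$ (with $J$ the all-ones matrix on the $n+1$ coordinates), whose eigenvalues are $1 - (n+1)/m$ (simple) and $1$ (multiplicity $n$). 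In the Fano range $m < n+1$ relevant to us, $\det M = 1 - (n+1)/m \neq 0$, so $\cQ_1$ is non-degenerate.

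Next I would substitute $\xi_i = \sqrt{-zL_\alpha}\,\eta_i$. The Jacobian contributes $(-zL_\alpha)^{(n+1)/2}$, the monomial contributes $(-zL_\alpha)^{k/2}\,\eta_0^{k_0}\cdots\eta_n^{k_n}$, and the exponent becomes
\[
\frac{\cQ(\xi)}{2z} \;=\; -\tfrac{1}{2}\cQ_1(\eta) \;+\; L_\alpha^{-1}\cdot(\text{quadratic in }\eta) \;+\; L_\alpha^{-2}\cdot(\text{quadratic in }\eta) \;+\; \cdots.
\]
Exponentiating the tail produces a formal series $1 + L_\alpha^{-1} Q_1(\eta) + L_\alpha^{-2} Q_2(\eta) + \cdots$ in which each $Q_r$ is a polynomial of even total degree in $\eta$. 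Hence the integral equals
\[
(-zL_\alpha)^{(n+1+k)/2} \int \eta_0^{k_0}\cdots\eta_n^{k_n}\, e^{-\tfrac{1}{2}\cQ_1(\eta)}\bigl(1 + L_\alpha^{-1} Q_1(\eta) + \cdots\bigr)\,d\eta,
\]
taken along the Lefschetz thimble, which after rescaling becomes a thimble for $\cQ_1$.

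For odd $k$, each factor in the bracket is even in $\eta$ while $\eta_0^{k_0}\cdots\eta_n^{k_n}$ is odd, so the integrand is odd under $\eta \mapsto -\eta$ and every term in the asymptotic expansion vanishes. For even $k$, the residual integrals are ordinary Gaussian moments which are $L_\alpha$-independent real numbers, and only integer powers of $L_\alpha^{-1}$ occur; this yields the claimed membership in $(-z)^{(n+1+k)/2}L_\alpha^{(n+1+k)/2}\RR[\![L_\alpha^{-1}]\!]$. When $k=0$, the leading Gaussian evaluates to a constant proportional to $(\det M)^{-1/2}$, giving the asserted $c \neq 0$ independent of $L_\alpha$. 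The main subtlety I anticipate is verifying that the Lefschetz thimble $\gamma_\alpha$ deforms under the rescaling to a contour on which the limiting Gaussian $e^{-\cQ_1(\eta)/2}$ converges and produces the claimed constant; this is part of the standard stationary-phase / steepest-descent framework underlying Givental's oscillatory-integral calculations, and the only genuine obstruction — degeneracy of $\cQ_1$ — is precisely excluded by the Fano hypothesis $m<n+1$.
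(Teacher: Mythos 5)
Your proposal is correct and follows essentially the same route as the paper: both compute the Hessian at the critical point, identify its leading behavior $L_\alpha^{-1}(I-J/m)$ with determinant $1-\tfrac{n+1}{m}\neq 0$ by the Fano condition, and then read off the powers of $L_\alpha$ and the parity vanishing from standard Gaussian moments. The only cosmetic difference is that you rescale $\xi=\sqrt{-zL_\alpha}\,\eta$ to isolate the $L_\alpha$-dependence before integrating, whereas the paper keeps the original variables and applies Wick's theorem directly, extracting $L_\alpha^{(n+1)/2}$ from $(\det B)^{-1/2}$ and $L_\alpha^{k/2}$ from the $k/2$ factors of $B^{-1}=O(L_\alpha)$.
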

\begin{proof} 

At each critical point \eqref{critpt} we have
 $$
\frac{\partial^2 W}{\partial x_i^2} = \frac{1}{(L_\alp+\lam_i)^2} \Big(  \lambda_i  +\frac{(m-1) }{m} L_\alp \Big) 
\and
\frac{\partial^2 W}{\partial x_i \partial x_j}   =-\frac{L_\alpha}{m(L_\alpha+\lambda_i)(L_\alpha+\lambda_j)}
$$
Denote $B_{ij}= \sum_{i,j} \partial_{x_i}  \partial_{x_j}W (\mathbf x_\alpha)$, we have $B_{ij}=B_{ji}$.
Further, we compute
\begin{align*}
\det B&=m^{n+1}L_\alp^{-(n+1)}   \Big( 1  - \frac{n+1}{m}  +O(L_\alp^{-1})  \Big)
 \end{align*}
In our case, which is Fano, making $1-\frac{n+1}{m}\neq 0$.
 This  implies $$ (B^{-1})_{ij} =  O( L_\alp) \qquad L_\alpha^{-1} \rightarrow 0.$$
 
 Gaussian integral says \eqref{Gau} vanishes if $k:=\sum k_i$ is not even. Assume $k=2\ell$ with $\ell\in\NN$.
By Wick's theorem (see e.g. \cite[Sect.\,0.2.1]{CPS}),  we obtain 
  \begin{align*} 
  \int_{\RR^n}  \xi_{d_0}\cdots  \xi_{d_k} e^{\frac{1}{2z}\cQ(\xi)} {d \xi_0 \cdots d \xi_n}  \ &
=  (-z)^{\frac{n+1+k}{2}}   \frac{(2\pi)^{n/2}}{\sqrt{\det B}} \frac{1}{2^\ell \ell !} \sum_{\sigma\in \fS_{2\ell}} \prod_{i=1}^l (B^{-1})_{d_{\sigma(2i-1)} d_{\sigma(2i)}}
  \\
 &\in  (-z)^{\frac{n+1+k}{2}}   L_\alp^{\frac{n+1}{2}} L_\alp^{\ell} O((L_{\alp}^{-1})^0) .
 \end{align*}
where $\fS_{2\ell}$ be the set of permutations of $\{1,2,\cdots,2\ell\}$ that is product of $\ell$ disjoint transpositions.  
\end{proof}
 \begin{remark}
For CY case, \eqref{limitcond} is no longer true, 
since the estimate of $\det B$ is changed to $O(L_\alpha^{-n-2})$. Indeed, for CY  case with $\sum_i \lambda_i \neq 0$, we  have
$$
\lim_{L_\alpha^{-1} \rightarrow 0} f_l(L_\alpha)  = \text{constant}.
$$
\end{remark}

\subsection{Specialization}
In our case, we have $m=5$ and $n=\n+4$. Further
the equivariant parameters are given by the following special values
$$(t_i)=(0,0,0,0,0,-\xi_{\n}  t,-\xi_{\n}^2  t,\cdots,-\xi_{\n}^\n  t). $$
At critical points, the equation for $L_\alpha$ becomes $y^5(y^\n/5^\n-\ft^\n) = -5^5  q \cdot y^5$. We consider
$$
\big\{ y_\alpha = 5 \zeta_\n^\alpha (\ft^\n +5^5 q)^{1/\n} = 5L_\alpha, \quad (x_i)_\alpha = \lambda_i +L_\alpha \big\}, \qquad \alpha=1,\cdots,\n.$$
At these $\n$ critical points, 
 the Hessian $(\partial_{x_i}\partial_{x_j} W)$ is non-degenerate and then all the argument in Section \ref{asymptI} can be applied. The equation \eqref{limitcond} gives us exactly \eqref{initialcond}.

  \newpage

 \begin{appendix}

\addtocontents{toc}{\protect\setcounter{tocdepth}{1}}

\section{Quintic's QDE}  \label{quinticqde}
Let $S^Q(\tau,z) =\sum_{k \geq 0} S^Q_k(\tau) z^{-k}$ be the $S$-function for the quintic, then under the flat basis we have the following quantum differential equation (QDE)
\begin{equation*}
z\, d S^Q (\tau,z) = d\tau *_\tau S^Q (\tau,z).
\end{equation*}
When $\tau$ is the mirror map $\tau_Q$ \eqref{tauloc0},   by using Divisor equation the QDE becomes
  \begin{align}\label{qQDE-star} (H+ zD) \, S^Q(z)\sta  =   S^Q (z)\sta * \dot \tau_Q . \end{align} 
where  $D=q \frac{d}{dq}$ and $\dot \tau_Q:=  H+D\tau_Q = I_{1,1} H$. 
 Suppose the quantum product $* \dot \tau_Q $ is given by a matrix $A^Q$  under the flat basis, namely
  $H^k *\dot \tau_Q = H^j (A^Q)_{j}^k $.
 Then by using
\beq \label{3point}
\left< 1,H,H^2 \right>_{0,3}^Q = 1,\qquad
\left< H,H,H \right>_{0,3}^Q = I_{2,2}/I_{1,1},\qquad
\eeq
where for $k=1,2,3$ we define $J_k:= I_k/ I_0$ and
\beq \label{I22}  
I_{2,2} = \text{ \small $ I_{1,1}\cdot \frac{d^2}{d \tau_Q^2} J_2 = D\Big(\frac{D (I_2/I_0)}{D(I_1/I_0)}\Big)$},
\eeq 
we can easily deduce the above matrix
\beq \label{quintic-QDE}
A^Q = {\small \begin{pmatrix}0 &&& \\I_{1,1}&0&&\\&  {I_{2,2}} &0&\\&& I_{1,1} &0  \end{pmatrix} }.
\eeq
Further, by solving the QDE we obtain the $S$-matrix at $\tau =\tau_Q$:
{\footnotesize
\begin{align}\label{quinticS} \quad&S^Q(z)^* = \id  +  \frac{1}{z}\begin{pmatrix}
 0 &  &   &  \\
 J_1 & 0  & &  \\
  &    \frac{J_2'}{J_1'}& 0  & \\
  &&J_1& 0
 \end{pmatrix}+ \frac{1}{z^2}\begin{pmatrix}
 0 &   && \\
  & 0 &   &\\
  J_2  &  & 0  &  \\
&  {\footnotesize \text{$\!\!\!\frac{J_2'}{J_1'}J_1\!-\!J_2$}}  && \quad 0
\end{pmatrix}+ \frac{1}{z^3}\begin{pmatrix}
 0 &   &  &  \\
  & 0 &   &  \\
  &  & 0  &  \\
 J_3 &&& 0
\end{pmatrix}. \
\end{align}}



\vspace{0.7cm}
\section{Staiblization of grahps}\label{stablization}

We consider a genus $g$ connected graph with $n$ legs, and with each vertex $v$ is labeled by its genus
$g_v\in\ZZ_{\ge 0}$.
A vertex $v$ with valence $n_v$ is called stable, if $2g_v-2 +n_v >0$. A graph is called stable if all its vertices are stable. 

Given such a graph $\Theta$ with $2g-2+n>0$, we can apply ``stabilization" to it to get a stable graph $ \Theta\ust$, by removing all (maximal rational) tails; replacing all (maximal rational) 
chains between two stable vertices by single edges; and by contracting all (maximal rational) 
chains between one stable vertices and an vertex with a leg incident to it.
In this paper, we will use ``tail", ``chain", and  ``end" to describe such three types of subgraphs just mentioned. (Here we follow \cite{CGT} for the terminologies of ``tail" and  ``end".)

Any vertex of $\Theta$ that remains a vertex after stabilization is called a ``stable-graph-vertex".
For any leg $l$ of $\Theta$, the stabilization associates to it a unique stable-graph-vertex $cl(\ell)\in V(\Theta)$
that is the vertex of $\Theta$ so the leg $l$ will be attached to after stabilization.
      
In this paper we apply stabilization to two kinds of decorated graphs: $\nmsp$ localization graphs, and bipartie graphs. 
%
%
%
\vspace{0.7cm}
\section{List of symbols}
\vspace{-0.5cm}
{
\begin{table}[H] \centering 
\begin{center}
 \noindent\begin{tabular}{cp{0.8\textwidth}}
\hline\\
 $\n$ & a large positive integer \black\\
 $p$ & the equivariant  hyperplane class $c_1(\sO_{\PP^{4+\n}}(1))$    
 \\
 $H$ & hyperplane class of quintic  \\
  $Q$ & the quintic $3$-fold \\
    $F$ & the base field $F=\QQ(\ft)$ \\
  $\aA$ & coefficient ring  $\aA=\QQ(\ft)[\![q]\!]$ of all cohomologies\\
  $\sH$ & the 
  $\nmsp$ 
  state space with twisted inner product and unit $\mathbf 1\in \sH$ \\
  $\sH_Q$ & the quintic state space with twisted inner product  and unit $\mathbf 1_Q\in H^0(Q)$ \\
  $\sH_{\pt_\alpha}$ & the point state space with twisted inner product and unit $\mathbf 1_\alp\in H^0(\pt)$ \\
  $\{\phi_i\}$ & the basis $\{\phi_i:=p^i\}_{i=0}^{\n+3}$ of $\sH$ with dual basis $\{\phi^{i}\}_{i=0}^{\n+3}$ \\
  $R\loc$ & the $R$-matrix by $\nmsp$ localization  $R\loc(z) \in \End(\sH )\otimes \aA[\![z]\!]$\\
  $R$ & the $R$-matrix by $\nmsp$ localization composited with the GRR formula
  \\
  $A_k,\!B_k$ & generators defined from quintic $I$-function, especially $A:=A_1$, $B:=B_1$\\
  $\sR$ & the ring of five generators  $\sR:=\QQ[A,B,B_2,B_3,Y]$\\
  $\mS^\alp_{a;i}$ & the specialized $S$-matrix $\mS_a^\alp:=   S\msp(z) 1^\alpha   \big|_{  z= \frac{5\ft_\alpha}{a}}, \and  \mS^\alp_{a;i}:= ( \mS^\alp_{a},  p^i)^\tw $ \\
  $\locg$ & regular localization graph or  $[0,1]$ localization graph\\
  $\Gamma$ &  stable graph\\
  $\bipg$ & decorated bipartite graph\\
  $G_{g,n,\bd}\ureg$ & the set of regular localization graphs\\
  $G_{g,n}^\n$ & the set of stable graphs with vertices decorated by $Q$ or $\pt_\alpha$\\
  $\Xi_{g,n}^r$ & the set of decorated bipartite graphs\\
\hline
\end{tabular}
\end{center}
\end{table}
}

\end{appendix}

\vspace{2cm}


\begin{thebibliography}{ABCDef}


\bibitem[Ba00]{Ba} S.\,Barannikov, \emph{Quantum periods-I. Semi-infinite variations of Hodge structures},
 Preprint, alg-geom/0006193.


\bibitem[BCOV93]{BCOV}  M.\,Bershadsky, S.\,Cecotti, H.\,Ooguri, and C.\,Vafa, \emph{Holomorphic anomalies in topological field theories}, Nuclear Phys. B {\bf 405} (1993), no. 2-3, 279-304. 










\bibitem[CCIT16]{CCIT2}
T.\,Coates, A.\,Corti, H.\,Iritani, and H.-H.\,Tseng, \emph{Hodge-Theoretic Mirror Symmetry for Toric Stacks},  arXiv:1606.07254.


\bibitem[CdGP91]{CdGP} P.~Candelas, X.~dela Ossa, P.~Green, and L.~Parkes,
{\em A pair of Calabi-Yau manifolds as an exactly soluble superconformal theory}, Nucl. Phys. B359 (1991) 21--74.



\bibitem[CG07]{CG} T.\,Coates and A.\,Givental, \emph{Quantum Riemann-Roch, Lefschetz and Serre},
Ann. of Math. {\bf 165}(2007), no. 1, 15--53.


\bibitem[CGT15]{CGT} T.\,Coates, A.\,Givental and H.-H.\,Tseng,  \emph{Virasoro constraints for toxic bundles},  arXiv:1508.06282.
 
 
  

\bibitem[NMSP1]{NMSP1}   H.-L.\,Chang, S.\,Guo and J.\,Li, and W.-P.\,Li, \emph{ The Moduli space of N-Mixed-Spin-P fields}, 
arXiv:1809.08806.

\bibitem[NMSP3]{NMSP3}   H.-L.\,Chang, S.\,Guo and J.\,Li, \emph{BCOV's Feymann rule of quintic $3$-fold via NMSP}, 
in preparation. 

 



\bibitem[CGLZ18]{YYFE}  H.-L.\,Chang, S.\,Guo, J.\,Li and J.\,Zhou, \emph{Yamaguchi-Yau functional equations via BCOV Feymann rule}, in preparation. 






\bibitem[CI15]{CI} T.\,Coates, and H.\,Iritani,  \emph{On the convergence of Gromov-Witten potentials and Givental's formula}, Michigan Math. J. Volume 64, Issue 3 (2015), 587-631. 



\bibitem[CKL17]{CKL} H.-L.\,Chang, Y.-H.\,Kiem and J.\,Li, \emph{Torus localization formulas for cosection localized virtual cycles}, Adv.Math. 308 (2017) 964-986.





\bibitem[CLLL16]{CLLL} H.-L.\,Chang, J.\,Li, W.-P.\,Li and C.-C.\,Liu, \emph{An effective theory of GW and FJRW invariants of quintics Calabi-Yau manifold},  arXiv:1603.06184.













\bibitem[CPS13]{CPS} E.\,Clader, N.\,Priddis and M.\,Shoemaker, \emph{Geometric Quantization with Applications to Gromov-Witten Theory}, arXiv:1309.1150.













\bibitem[GP99]{GP} T.\,Graber and R.\,Pandharipande,   \emph{Localization of virtual classes}, 
Invent. Math. {\bf 135}(1999), no. 2, 487--518.

\bibitem[Gi96]{G96} A.\,Givental,  \emph{Equivariant Gromov-Witten invariants}, Internat. Math. Res. Notices 1996, no. 13, 613--663.



\bibitem[Gi01a]{Giv2} A.\,Givental, \emph{Gromov-{W}itten invariants and quantization of quadratic
  {H}amiltonians}, Mosc. Math. J., 1(4):551--568, 645, 2001.

\bibitem[Gi01b]{Giv3} A.\,Givental, \emph{Semisimple {F}robenius structures at higher genus},  Internat. Math. Res. Notices, (23):1265--1286, 2001.

\bibitem[Gi03]{Giv1} A.\,Givental, \emph{Symplectic geometry of Frobenius structures},
{\sl Frobenius manifolds}, 91--112,  Aspects Math., E36, Vieweg, Wiesbaden, 2004, math.AG/0305409.

 \bibitem[GJR17]{GJR} S.\,Guo, F.\,Janda, and Y.\,Ruan. \emph{A mirror theorem for genus two Gromov Witten invariants of quintic threefolds}, arXiv:1709.07392. 

 \bibitem[GR16]{GR1} S.\,Guo, D.\,Ross, \emph{Genus-One Mirror Symmetry in the Landau-Ginzburg Model},  arXiv:1611.08876, to appear in Algeraic Geometry.


\bibitem[GR17]{GR2}S.\,Guo, D.\,Ross, \emph{The Genus-One Global Mirror Theorem for the Quintic Threefold},  arXiv:1703.06955. 


















\bibitem[KM94]{KM}
M.\,Kontsevich and Yu.\,Manin, \emph{Gromov-Witten classes, quantum cohomology, and enumerative geometry.} Comm. Math. Phys. 164 (1994), 525-562.


\bibitem[LeP04]{LeeP} Y-P, Lee, R.\,Pandharipande, \emph{Frobenius manifolds, Gromov-Witten theory, and Virasoro constraints}.  2004. 















\bibitem[PPZ15]{PPZ} R.\,Pandharipande, A.\,Pixton and D.\,Zvonkine, 
\emph{Relations on $\barM_{g,n}$ via $3$-spin structures},  J. Amer. Math. Soc. {\bf 28}(2015), 279-309.






















\bibitem[YY04]{YY} S.\,Yamaguchi and S.-T. Yau, \emph{Topological string partition functions as polynomials},  J. High Energy
Phys., (7):047, 20, 2004.



\bibitem[Zi09]{Zi} A.\,Zinger, \emph{The reduced genus  Gromov-Witten invariants of Calabi-Yau hypersurfaces},J. Amer. Math. Soc. {\bf 22}(2009), 691-737.




\end{thebibliography}
\end{document}